\newcommand\weakto{\mathop{\rightharpoonup}}
\numberwithin{equation}{section}
\newtheorem{theorem}{Theorem}[section]
\newtheorem{definition}[theorem]{Definition}
\newtheorem{assumption}[theorem]{Assumption}
\newtheorem{example}[theorem]{Example}
\newtheorem{remark}[theorem]{Remark}
\newtheorem{proposition}[theorem]{Proposition}
\newtheorem{lemma}[theorem]{Lemma}
\newcommand\R{\mathbb{R}}
\newcommand\N{\mathbb{N}}
\newcommand\Div{\mathrm{div\,}}
\newcommand\Tr{\mathop{\mathrm{Tr}}}
\newcommand\cof{\mathop{\mathrm{cof}}}
\newcommand\eps{\varepsilon}
\newcommand\calL{\mathcal{L}}
\newcommand\calE{\mathcal{E}}
\newcommand\calD{\mathcal{D}}
\newcommand\sym{\mathrm{sym}}
\def\diag{\mathrm{diag}}
\newcommand{\calH}{\mathcal{H}}
\DeclareMathOperator{\curl}{curl}
\def\O{\mathrm{O}}
\def\SO{\mathrm{SO}}
\def\nabla{D}
\renewcommand{\epsilon}{\varepsilon}
\newcommand\dist{\operatorname{dist}}
\newcommand\Id{\operatorname{Id}}
\renewcommand\div{\mathrm{div\,}}
\newcommand{\Rnn}{\R^{n\times n}}
\newcommand{\Rnnn}{\R^{n\times n\times n}}
\newcommand{\Rnnnn}{\R^{n\times n\times n\times n}}
\newcommand{\Rnsym}{\R^{n\times n}_\sym}
\newcommand\loc{\mathrm{loc}}
\newcommand\Emb{\calE}
\newcommand\Eg{\calE_0}
\newcommand\divcurl{$\operatorname{div-curl}$}
\newcommand\Divcurl{$\operatorname{div-curl}$}
\newcommand{\Xpq}{X_{p,q}}
\newcommand{\Dloca}[1]{\calD_{\loc,#1}}
\newcommand{\DlocT}{\Dloca{T}}
\newcommand\Ext{\mathop{Ext}}
\begin{document}

\title{Data-Driven Finite Elasticity}

\author[S.~Conti, S.~M\"uller and M.~Ortiz]
{S.~Conti$^1$, S.~M\"uller$^{1,2}$ and M.~Ortiz$^{1,2,3}$}

\address
{
  $^1$ Institut f\"ur Angewandte Mathematik, Universit\"at Bonn,
  Endenicher Allee 60,
  53115 Bonn, Germany.
}
\address
{
  $^2$ Hausdorff Center for Mathematics,
  Endenicher Allee 60,
  53115 Bonn, Germany.
}

\address
{
  $^3$ Division of Engineering and Applied Science,
  California Institute of Technology,
  1200 E.~California Blvd., Pasadena, CA 91125, USA.
}

\begin{abstract}
We extend to finite elasticity the Data-Driven formulation of geometrically linear elasticity presented
in Conti, Müller, Ortiz, Arch.\ Ration.\ Mech.\ Anal.\ 229, 79-123, 2018.
The main focus of this paper concerns the formulation of a suitable framework in which the Data-Driven problem of finite elasticity is well-posed in the sense of existence of solutions. We confine attention to deformation gradients $F \in L^p(\Omega;\Rnn)$ and first Piola-Kirchhoff stresses $P \in L^q(\Omega;\Rnn)$, with $(p,q)\in(1,\infty)$ and $1/p+1/q=1$. We assume that the material behavior is described by means of a material data set containing all the states $(F,P)$ that can be attained by the material, and develop germane notions of coercivity and closedness of the material data set. Within this framework, we put forth conditions ensuring the existence of solutions. We exhibit specific examples of two- and three-dimensional material data sets that fit the present setting and are compatible with material frame indifference.
\end{abstract}

\maketitle

\section{Introduction}

In a recent publication \cite{ContiMuellerOrtiz2018-Datadriven}, we have proposed a reformulation of geometrically linear elasticity in which solutions are understood as points in stress-strain space, or {\sl phase space}, satisfying compatibility and equilibrium constraints. The reformulation was motivated by earlier work on Data-Driven methods in computational mechanics \cite{WD005}, in which the aim is to formulate solvers for boundary-value problems directly from material data, thus eschewing material modeling altogether. The Data-Driven problem, as defined in \cite{WD005} and in \cite{ContiMuellerOrtiz2018-Datadriven}, consists of minimizing the distance between a given material data set and the subspace of compatible strain fields and stress fields in equilibrium. It is not immediately clear that such problems are well-posed in the sense of existence of solutions, especially when the material data is in the form of unstructured, possibly 'noisy', point sets. These difficulties notwithstanding, in \cite{ContiMuellerOrtiz2018-Datadriven} conditions on the data set ensuring existence are set forth, and it is shown that classical solutions are recovered when the data set takes the form of a graph.
{The approach developed in  \cite{ContiMuellerOrtiz2018-Datadriven} has been used for the study of relaxation in a data-driven model with a single outlier, which was motivated by the study of porous media \cite{RoegerSchweizerDD}.} 

This latter connection shows that Data-Driven problems generalize classical problems and subsume them as special cases. The broad point of view that the problems of continuum mechanics can be written as a set of linear partial-differential equations (balance laws) and nonlinear pointwise relations between the quantities in the balance laws (constitutive relations) was propounded by Luc Tartar in the 1970s. For an early exposition of these ideas, see \cite{Tartar1979}. For further developments, see \cite{Tartar1985,Tartar1990_Hmeas} and the monograph \cite{Tartar2009Buch}. This focus on material data as the main source of epistemic uncertainty, embedded within the framework of universal or material-independent balance laws, comes back naturally to the fore in connection with the current interest in Data Science (see, for example, \cite{WD002, WD023, WD005}).

The main focus of the present paper concerns the formulation of a suitable framework in which the Data-Driven problem of finite elasticity is well-posed in the sense of existence of solutions.  The new Data-Driven finite elasticity framework is an extension of that developed in \cite{ContiMuellerOrtiz2018-Datadriven} in the geometrically linear setting. We recall that, in that case, the natural phase space of strains and stresses is $\mathcal{Z} := L^2(\Omega, \R^{n\times n}_{\rm sym}) \times L^2(\Omega, \R^{n\times n}_{\rm sym})$. In addition, conservation of angular momentum and material-frame indifference are built directly into the phase space, simply by restricting it to symmetric stresses and strains. Furthermore, the Hilbert-space structure of $\mathcal{Z}$ greatly facilitates analysis. In extending the theory to finite kinematics, much of this convenient structure is lost and needs to be generalized.

The new framework is laid out in Section~\ref{4lPRew}. We confine attention to phase spaces of the form $\Xpq(\Omega) := L^p(\Omega;\Rnn)\times L^q(\Omega;\Rnn)$, which combine deformation gradients $F \in L^p(\Omega;\Rnn)$ and first Piola-Kirchhoff stresses $P \in L^q(\Omega;\Rnn)$, with $(p,q)\in(1,\infty)$ and $1/p+1/q=1$. In addition to its usual strong and weak topologies, we endow the phase space $\Xpq(\Omega)$ with two additional topologies: A topology of \divcurl-convergence and a topology of data or $\Delta$-convergence.
{A sequence $(F_k,P_k)$ is \divcurl-convergent if it converges weakly and the sequences $\curl F_k$, $\Div P_k$ converge strongly in the corresponding negative spaces, so that one can apply the div-curl-Lemma.} 
{A sequence $(y_k,z_k)$ is $\Delta$ convergent if it converges weakly and the sequence of differences $y_k-z_k$ converges strongly.} 
{We refer to}
 Definition~\ref{defdivcurlc} and Definition~\ref{defdeltac} {for details}. These topologies play an important role in establishing conditions for existence.

The state $(F,P)\in \Xpq(\Omega)$ of the elastic body is subject to compatibility and equilibrium constraints.
{The compatibility constraint is linear and requires $F$ to be the gradient of a displacement field, 
$F=Du$,}
together with Dirichlet conditions on part of the boundary.
{The equilibrium constraint is also linear and requires the divergence of $P$ to be in equilibrium with body forces, 
$\div P+f=0$,}
together with Neumann conditions on the remainder of the boundary.
In addition, the state $(F,P)$ of the elastic body must satisfy moment equilibrium, {$FP^T=P^TF$.}
These constraints, taken together, define a constraint subset $\Emb \subseteq \Xpq(\Omega)$ of admissible states
{(we refer to Definition \ref{defineE} for details)}. Unlike the case of linearized kinematics, $\Emb$ is not an affine subspace of $\Xpq(\Omega)$ due to the nonlinearity of the moment-equilibrium constraint $FP^T=PF^T$. However, this nonlinearity can be treated {\sl via} compensated compactness, and indeed we show in Lemma~\ref{lemmacoercFP} that $\Emb$ is weakly closed in $\Xpq(\Omega)$.

In addition, as in the case of linear elasticity \cite{ContiMuellerOrtiz2018-Datadriven}, we assume that the material behavior is characterized by a material data set $\calD \subseteq \Xpq(\Omega)$ collecting all the possible states attainable by the material. An important particular case is that of local materials.
In that case, there is a local material data set $\calD_\loc \subseteq \Rnn\times\Rnn$ that contains the possible local states $(F(x),P(x))$ of the material, see Definition~\ref{deflocalmat}. In the context of finite kinematics, material-frame indifference additionally requires the local material data set $\calD_\loc$ to be invariant under the action of the full proper orthogonal group $SO(n)$. By virtue of material-frame invariance, $\calD_\loc$ consists of $SO(n)$-orbits, see Remark~\ref{remarkorbit}.

On this basis, the Data-Driven problem of finite elasticity consists of determining the state $(F,P)$ in the constraint set $\Emb$ that minimizes its deviation from a material data set $\calD$. In order to measure that deviation, we choose a convex function $V:\Rnn\to[0,\infty)$, with convex conjugate $V^*$, such that both vanish only at 0, and define the function of state
\begin{equation}\label{SP5voK}
    \psi(F,P)=\min\{ V(F-F')+V^*(P-P') \, : \, (F',P') \in \calD_\loc \}.
\end{equation}
By an appropriate choice of $V$ and $V^*$ (see Assumption~\ref{Ze6How}), $\psi$ is non-negative, vanishes on $\calD_\loc$ and grows away from it, thus providing a measure of deviation from $\calD_\loc$. The Data-Driven problem of finite elasticity is, then,
\begin{equation}\label{9lFapr}
    \inf_{(F,P)\in\Emb} \int_\Omega \psi(F(x),P(x)) \, dx ,
\end{equation}
or, equivalently,
\begin{equation}\label{6Hudof}
    \inf_{(F,P)\in\Emb,\ (F',P')\in \calD}
    \int_\Omega \Big( V(F(x)-F'(x))+V^*(P(x)-P'(x)) \Big) \, dx .
\end{equation}
In this latter form and depending on the order of minimization, the aim of the Data-Driven problem of finite elasticity is to find admissible states, i.~e., states satisfying compatibility and equilibrium, that are closest to the material data set, or, equivalently, states in the material data set that are closest to being admissible, with 'closeness' understood locally in the sense of the function $\psi(F(x),P(x))$.

We remark that the nonlinear condition of moment equilibrium renders the constraint set $\Emb$ difficult to approximate. In practice, it is often more convenient to work with the affine space $\Eg$ and build the moment balance condition into the data set, as specified in Definition \ref{defmomequil}. This alternative choice is a simple matter of convenience and does not entail {an} essential change to the framework presented here.

In the remainder of the paper, we formulate conditions under which the Data-Driven problem of
 finite elasticity (\ref{6Hudof}) has solutions. To this end, we follow the standard direct method of the Calculus of Variations. 
Our main compactness result is Theorem~\ref{theoremcompactness}, which establishes the weak, $\Delta$ and \divcurl-relative compactness of sequences of admissible and material states whose deviation from each other remains bounded. The key assumption is a property of the data set, referred to as $(p,q)$-coercivity, which we introduce in Definition~\ref{defcoercive}.

In Section~\ref{V7bisp} we elucidate the requisite lower-semicontinuity of (\ref{9lFapr}) in terms of closedness properties of the material data set.
The appropriate notion  turns out to be closedness with respect to \divcurl-convergence
(Definition~\ref{definitiondeltaclosed}). We note that, in following this approach, we depart from that of  \cite{ContiMuellerOrtiz2018-Datadriven}, which focuses instead on the closedness of $\calE \times \calD$ with respect to $\Delta$-convergence.
 Whereas the approach with $\Delta$-convergence and transversality deals with both $\calE$ and $\calD$ jointly, the present formulation in terms of \divcurl-convergence permits to phrase a large part of the discussion solely in terms of the data set $\calD$, see for example Definition \ref{defcoercive}, Definition~\ref{definitiondeltaclosed} and Definition~\ref{definitionlocdeltaclosed}. In particular, coercivity and closedness depend only on the data set and not on the external forcing and the boundary conditions. In Proposition~\ref{propdeltaclosed}, we show that \divcurl-convergence can be elucidated locally in terms of $\calD_\loc$, which greatly facilitates the analysis of specific material data sets. In Sections~\ref{Jlt9at} and \ref{dRl1EW} we present examples of \divcurl-closed material data sets generated by stress-strain functions, defining graphs in local phase space, which are compatible with material frame indifference. In all these examples, the property of polymonotonicity of the stress-strain function plays an important role and supplies sufficient conditions for \divcurl-closedness. As usual in finite elasticity, convexity is incompatible with material frame indifference.

Existence of solutions then follows from lower-semicontinuity and compactness by Tonelli's theorem \cite{Tonelli1921}. A number of alternatives arise in connection with the possible solutions of problem (\ref{6Hudof}). A Data-Driven solution consists of two fields, $(F,P)\in\Emb$ and $(F',P')\in \calD$, which may or may not coincide. We focus here on the case in which the infimum is zero, corresponding to solutions with $(F,P) = (F',P')$ that we call classical. Specifically,  Theorem~\ref{prA5lj} ensures existence of classical solutions if $\calD_\loc$ is $(p,q)$-coercive, \divcurl-closed and, in addition, the infimum of (\ref{9lFapr}) is $0$.

The general Data-Driven setting allows in principle also for the existence of generalized solutions,
with $(F,P) \neq (F',P')$. {These} solutions may arise in practice from data sets, such are commonly obtained from experiment, consisting of finite point sets. For such data sets, $\Emb$ and $\calD$ are likely to be disjoint even in cases in which a classical solution might {be} expected to exist. The Data-Driven reformulation of the problem supplies a workable notion of 'best solution' under such circumstances. Generalized solutions are not addressed in this paper.

Evidently, the infimum in (\ref{6Hudof}) may not be realized at all, in which case Data-Driven solutions, classical or generalized, fail to exist and the functional (\ref{6Hudof}) needs to be relaxed. Again, the question of relaxation is beyond the scope of this paper and is deferred to future work.

\section{General formulation}\label{4lPRew}

We consider throughout the entire paper  an elastic body occupying a set $\Omega$ in its reference configuration and assume the following.

\begin{assumption}\label{assumptiongeneral}
\begin{itemize}
\item[i)] $\Omega \subseteq \R^{n}$ is bounded, connected, open, Lip\-schitz with outer normal $\nu:\partial\Omega\to S^{n-1}$.
\item[ii)] The sets $\Gamma_D$, $\Gamma_N\subseteq\partial\Omega$ are disjoint relatively open subsets of $\partial\Omega$ with $\overline{\Gamma}_D \cap \overline{\Gamma}_N = \partial\Omega$, $\calH^{n-1}(\overline\Gamma_N \setminus \Gamma_N)=\calH^{n-1} (\overline\Gamma_D\setminus\Gamma_D)=0$, and $\Gamma_D \ne \emptyset$.
\item[iii)] We let $(p,q)\in(1,\infty)$ and $1/p+1/q=1$.
\item[iv)] The body deforms under the action of applied body forces $f\in L^q(\Omega;\R^n)$, prescribed boundary displacements $g_D \in W^{1/q,p}( \partial\Omega ; \R^{n})$ and applied boundary tractions $h_N \in W^{-1/q,q}(\partial\Omega; \R^{n})$.
\end{itemize}
\end{assumption}

We refer to Appendix~\ref{appendixtraces} and, in particular, Definition~\ref{deffractionalsobolev} for the definition of the fractional and negative Sobolev spaces.

We begin by defining suitable phase spaces, i.~e., spaces of work-conjugate deformations and stresses $(F(x),P(x))$ over $\Omega$, and endowing them with several topologies.

\begin{definition}[Phase space]\label{defxpq}
For $p,q\in(1,\infty)$, $\Omega\subseteq\R^n$ open, we define the phase space by $\Xpq(\Omega):=L^p(\Omega;\Rnn)\times L^q(\Omega;\Rnn)$ with the norm
\begin{equation}
    \|(F,P)\|_{\Xpq(\Omega)}
    :=
    \|F\|_{L^p(\Omega;\Rnn)}+\|P\|_{L^q(\Omega;\Rnn)}.
\end{equation}
\end{definition}
In the following we shall for brevity often drop the target space from the norms, writing
for example $    \|F\|_{L^p(\Omega)}$ instead of $\|F\|_{L^p(\Omega;\Rnn)}$.

The set $\Xpq(\Omega)$ is a reflexive Banach space, a sequence $(F_k,P_k)\in \Xpq(\Omega)$ converges weakly to $(F,P)\in \Xpq(\Omega)$, denoted $(F_k,P_k) \rightharpoonup (F,P)$, if and only if $F_k\weakto F$ in $L^p(\Omega;\Rnn)$ and $P_k\weakto P$ in $L^q(\Omega;\Rnn)$. The same holds for strong convergence. In addition, we shall require the following notions of convergence.

\begin{definition}[Div-curl convergence]\label{defdivcurlc}
We say that a sequence $(F_k, P_k)\in \Xpq(\Omega)$ is \divcurl-convergent
to $(F,P)$ if it converges weakly in $\Xpq(\Omega)$ and additionally
\begin{equation}\label{eqdivcurlconv}
 \begin{split}
  \curl F_k\to \curl F & \text{ strongly in } W^{-1,p}(\Omega;\Rnnn) ,\\
  \div P_k\to \div P & \text{ strongly in } W^{-1,q}(\Omega;\R^n) .
 \end{split}
\end{equation}
\end{definition}
As usual, the distributional differential operators $(\curl F)_{ijk}:=\partial_k F_{ij}-\partial_jF_{ik}$ and $(\div F)_i:=\sum_j \partial_j F_{ij}$ are both defined rowwise.

\begin{remark}[Div-curl convergence]
It follows from Rellich's theorem that, if $(F_k,P_k)\weakto (F,P)$ and
\begin{equation}
    \sup_k\|  \curl F_k\|_{L^p(\Omega)}+\|\div P_k\|_{L^q(\Omega)}<\infty ,
\end{equation} then $(F_k,P_k)$ \divcurl-converges to $(F,P)$.
\end{remark}

\begin{definition}[$\Delta$-convergence]\label{defdeltac}
A sequence $(y_k,z_k)\in \Xpq(\Omega)\times\Xpq(\Omega)$ is said to $\Delta$-converge to $(y,z)\in \Xpq(\Omega)\times\Xpq(\Omega)$ if $y_k\rightharpoonup y$ weakly, $z_k\rightharpoonup z$ weakly and $y_k-z_k\to y-z$ strongly in $\Xpq(\Omega)$.
\end{definition}

Evidently, one of the two weak convergences can be left out of the definition, since it follows from the other and the strong convergence of the difference. As in \cite{ContiMuellerOrtiz2018-Datadriven}, we choose the present variant of the definition for symmetry.

We proceed by identifying subsets of admissible states, i.~e., states that satisfy compatibility and the balance laws.

\begin{definition}[Constraint sets]\label{defineE}
We denote by $\Eg$ the set of states $(F,P)\in \Xpq(\Omega)$ such that
there is $u\in W^{1,p}(\Omega;\R^n)$ with
\begin{subequations}\label{sIes1A}
\begin{align}
    &
    F =  \nabla u  ,
    &  \text{in } \Omega ,
    \\ & \label{eqGammaD}
    u = g_D ,
    &   \text{on } \Gamma_D ,
\end{align}
\end{subequations}
and
\begin{subequations}\label{fRoa1l}
\begin{align}
    &
    {\rm div} P + f = 0 ,
    &
    \text{in } \Omega ,
    \\ &
    P(x) \nu = h_N ,
    &  \label{eqGammaN}
    \text{on } \Gamma_N.
\end{align}
\end{subequations}
We further define $\Emb:=\{(F,P)\in \Eg: FP^T\in\Rnsym\,\, \text{a.~e.~in } \Omega\}$.
\end{definition}

We note that $\Eg$ is an affine subspace of states that are compatible and in force equilibrium. The set $\Emb$ is the subset of states in $\Eg$ that are also in moment equilibrium.

As in \cite{ContiMuellerOrtiz2018-Datadriven}, the boundary conditions (\ref{eqGammaD}) and (\ref{eqGammaN}) are interpreted in the sense of traces. Details of the definitions of the spaces are given in Appendix \ref{appendixtraces}. Specifically, $u= g_D$ on $\Gamma_D$ means that the trace of $u$, as a function in $W^{1/q,p}(\partial\Omega;\R^n)\subseteq L^p(\partial\Omega;\R^n)$, equals $g_D$ for $\calH^{n-1}$-almost every point in $\Gamma_D$. The condition $P \nu = h_N$ on $\Gamma_N$ in turn means that
\begin{equation}
    \langle B_\nu P, \psi\rangle = \langle h_N,\psi\rangle ,
\end{equation}
for any $\psi\in W^{1/q,p}(\partial\Omega;\R^n)$ that obeys $\psi=0$ $\calH^{n-1}$-almost everywhere on $\partial\Omega\setminus\Gamma_N$.
Here $\langle \cdot,\cdot\rangle$ denotes the duality pairing between $ W^{1/q,p}(\partial\Omega;\R^n)$ and $ W^{-1/q,q}(\partial\Omega;\R^n)$, 
{and $B_\nu$ is the operator giving the normal component of the trace},
as in Lemma \ref{lemmatraceeq}.

The topologies of $\Delta-$ and \divcurl-convergence are related as follows.
We stress that here we assume that the limit has the form $(y,y)$ for some $y\in \Xpq(\Omega)$.
\begin{lemma}\label{lemmadeltatodivcurl}
Assume that $(y_k, z_k)\in \Xpq(\Omega)\times\Xpq(\Omega)$ is $\Delta$-convergent to $(y,y)$ and $(z_k) \subseteq \Eg$. Then, $y_k$ is \divcurl-convergent to $y$.
\end{lemma}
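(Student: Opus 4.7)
The plan is to exploit the fact that $z_k \in \Eg$ forces $\curl F'_k$ and $\div P'_k$ to be trivial or $k$-independent, so the nontrivial part of $\curl F_k$ and $\div P_k$ lives in the difference $y_k - z_k$, which converges strongly.

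Write $y_k=(F_k,P_k)$ and $z_k=(F'_k,P'_k)$, and decompose linearly
\begin{equation*}
    \curl F_k = \curl(F_k-F'_k) + \curl F'_k,
    \qquad
    \div P_k = \div(P_k-P'_k) + \div P'_k .
\end{equation*}
Since $z_k \in \Eg$, there exists $u'_k\in W^{1,p}(\Omega;\R^n)$ with $F'_k=Du'_k$, hence $\curl F'_k=0$ distributionally, and $\div P'_k=-f$ for every $k$. Moreover $\curl:L^p(\Omega;\Rnn)\to W^{-1,p}(\Omega;\Rnnn)$ and $\div:L^q(\Omega;\Rnn)\to W^{-1,q}(\Omega;\R^n)$ are bounded linear operators, so the strong convergence $y_k-z_k\to 0$ in $X_{p,q}(\Omega)$ provided by $\Delta$-convergence gives
\begin{equation*}
    \curl(F_k-F'_k)\to 0 \text{ in } W^{-1,p},
    \qquad
    \div(P_k-P'_k)\to 0 \text{ in } W^{-1,q}.
\end{equation*}
Combining, $\curl F_k\to 0$ strongly in $W^{-1,p}$ and $\div P_k\to -f$ strongly in $W^{-1,q}$.

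It remains to identify these strong limits with $\curl F$ and $\div P$, where $(F,P)=y$. Here I use weak continuity of the same operators: since $F'_k\weakto F$ in $L^p$, passing to the distributional limit in $\curl F'_k=0$ yields $\curl F=0$; similarly $P'_k\weakto P$ in $L^q$ and $\div P'_k=-f$ give $\div P=-f$. Combined with the previous step, this proves $\curl F_k\to \curl F$ and $\div P_k\to \div P$ strongly, which together with the given weak convergence $y_k\weakto y$ is exactly the statement of \divcurl-convergence in Definition~\ref{defdivcurlc}.

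There is no real obstacle — the argument is essentially one line once one notices that the $\div$ and $\curl$ of $z_k$ are fixed (independent of $k$ or zero) because $z_k$ lies in the affine constraint set $\Eg$; the only thing to be careful about is matching the limits by using weak continuity of $\curl$ and $\div$ to pass to the limit in the identities $\curl F'_k=0$ and $\div P'_k=-f$.
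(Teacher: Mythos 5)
Your proof is correct and follows essentially the same route as the paper's: decompose $\curl F_k$ and $\div P_k$ linearly, use that the $\curl$ and $\div$ of the member of $\Eg$ are fixed ($0$ and $-f$), and conclude from the strong convergence of the difference together with the bounded linearity of $\curl$ and $\div$ from $L^p$ (resp.\ $L^q$) into the negative Sobolev spaces. Your identification of the limits via weak continuity is also exactly what the paper does, and you even get the sign of $\div P=-f$ right where the paper's proof has a harmless slip.
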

\begin{proof}
Let $y_k=(F_k',P_k')$, $z_k=(F_k,P_k)$. Since $z_k\in\Eg$, it follows that $\curl F_k=0$ and $\div P_k=f$ in the sense of distributions. By $\Delta$-convergence, we have $F_k'-F_k\to0$ strongly in $L^p(\Omega;\Rnn)$ and $P_k'-P_k\to0$ strongly in $L^q(\Omega;\Rnn)$. Therefore, $\curl F_k'=\curl(F_k'-F_k)\to0$ strongly in $W^{-1,p}(\Omega;\Rnnn)$ and $\div P_k'=f+ \div(P_k'-P_k)\to f=\div P$ strongly in $W^{-1,q}(\Omega;\R^n)$.
\end{proof}

We observe that, in this language, the classical div-curl Lemma \cite{Murat1978,  Tartar1978, Tartar1979, Murat1981, Tartar1982} takes the following form.

\begin{lemma}[\divcurl\ Lemma]\label{lemmadivcurl}
Let $(F_k,P_k)\in \Xpq(\Omega)$ be a sequence \divcurl-converging to $(F,P)$, with $1/p+1/q=1$. Then, 
    $F_k P_k^T \weakto FP^T$ in the sense of distributions
and, in particular, $F_k\cdot P_k\weakto F\cdot P$. Furthermore, if $p\ge n$, $\det F_k\weakto \det F$ in the sense of distributions and, if $p\ge n-1$, $\cof F_k\weakto \cof F$ in the sense of distributions.
\end{lemma}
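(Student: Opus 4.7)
The three conclusions are of increasing depth; I would treat them in turn.

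For the distributional convergence of $F_kP_k^T$, fix indices $i,j\in\{1,\dots,n\}$ and denote by $a_k^i\in L^p(\Omega;\R^n)$ the $i$-th row of $F_k$ and by $b_k^j\in L^q(\Omega;\R^n)$ the $j$-th row of $P_k$; then $(F_kP_k^T)_{ij}=a_k^i\cdot b_k^j$. Because the distributional differential operators in Definition~\ref{defdivcurlc} act rowwise, the hypotheses translate directly into $a_k^i\weakto a^i$ in $L^p$, $b_k^j\weakto b^j$ in $L^q$, $\curl a_k^i\to\curl a^i$ strongly in $W^{-1,p}$ and $\div b_k^j\to\div b^j$ strongly in $W^{-1,q}$. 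With $1/p+1/q=1$, the classical Murat--Tartar div-curl lemma then yields $a_k^i\cdot b_k^j\weakto a^i\cdot b^j$ in $\mathcal{D}'(\Omega)$, which is precisely the $(i,j)$ entry of the claim; taking the trace (setting $i=j$ and summing) gives $F_k\cdot P_k\weakto F\cdot P$.

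For $\cof F_k$ and $\det F_k$, I would reduce to the classical weak continuity of minors of gradients (Reshetnyak--Ball). Since distributional convergence is a local property, fix $\varphi\in C_c^\infty(\Omega)$ supported in a ball $B\Subset\Omega$. Using the strong convergence $\curl F_k\to\curl F$ in $W^{-1,p}$, I would solve a rowwise elliptic problem on $B$ in order to decompose $F_k|_B=Du_k+G_k$ with $u_k\weakto u$ in $W^{1,p}(B;\R^n)$ and $G_k\to G$ strongly in $L^p(B;\Rnn)$, where $G:=F|_B-Du$. Expanding the polynomials $\cof(Du_k+G_k)$ and $\det(Du_k+G_k)$, every summand with no $G_k$ factor is an entry of a minor of the gradient $Du_k$, whose distributional weak continuity is the classical Reshetnyak--Ball theorem under exactly the hypotheses $p\ge n-1$ and $p\ge n$ respectively. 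Every remaining summand contains a factor of $G_k$ that converges strongly in $L^p$, while the complementary factors stay bounded in suitable $L^r$ spaces, so that H\"older's inequality passes these mixed terms to the limit against $\varphi$.

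The main technical point is the decomposition $F_k|_B=Du_k+G_k$ with $G_k\to G$ strongly in $L^p$: this is a matrix-valued, quantitative version of the Helmholtz decomposition and requires the standard $L^p$ estimates on the underlying singular-integral operators for $p\in(1,\infty)$. Once this is in place, the rest is bookkeeping via H\"older, and the exponent thresholds are tight because a $k\times k$ minor is a polynomial of degree $k$ in the entries of $F$ and hence a priori lies only in $L^{p/k}$, forcing $p\ge k$ for $L^1_{\loc}$-integrability of the limit and giving the stated hypotheses $p\ge n-1$ for $\cof$ and $p\ge n$ for $\det$.
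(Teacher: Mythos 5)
Your argument for the first assertion ($F_kP_k^T\weakto FP^T$ and hence $F_k\cdot P_k\weakto F\cdot P$) is exactly the paper's route: the paper disposes of the whole lemma with a one-line citation of Murat's compensated-compactness theorem, and your rowwise application of the classical div-curl lemma is just that citation spelled out. For the minors you take a genuinely different path. The paper again points to Murat's quadratic theorem (which directly covers $\det$ for $n=2$ and the entries of $\cof$ for $n=3$, and is iterated for the cubic determinant in $3$d), whereas you reduce to gradients by a Hodge-type decomposition $F_k|_B=Du_k+G_k$ with $G_k\to G$ strongly in $L^p$ and then invoke Reshetnyak--Ball weak continuity of minors of gradients. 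This is a legitimate and arguably more transparent route, and the decomposition you need is available with the standard $W^{-1,p}\to L^p$ estimates (the paper itself uses the same device in the proofs of Proposition~\ref{propdeltaclosed} and Theorem~\ref{theostrictlyqmclosed2}); your exponent thresholds $p\ge n$ and $p\ge n-1$ match the statement.

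One step of your argument is stated too loosely to be accepted as written: for the mixed terms in the expansion of $\det(Du_k+G_k)$ and $\cof(Du_k+G_k)$ you claim that one strongly convergent $G_k$-factor together with \emph{boundedness} of the complementary factors suffices, via H\"older, to pass to the limit. It does not: a term such as $(Du_k)_{22}(Du_k)_{33}G_{k,11}$ contains the product $(Du_k)_{22}(Du_k)_{33}$ of two merely weakly convergent factors, which need not converge weakly to the product of the limits, so strong convergence of $G_{k,11}$ alone settles nothing. The fix is to group the multilinear expansion by minors: each mixed term is (a $j$-minor of $Du_k$) contracted with (an $(n-j)$-minor of $G_k$) for some $1\le n-j$. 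The minors of $G_k$ converge \emph{strongly} in $L^{p/(n-j)}$ because each factor converges strongly in $L^p$, while the $j$-minors of $Du_k$ converge \emph{weakly} in $L^{p/j}$ by the very Reshetnyak--Ball theorem you are already invoking at lower order ($p\ge j$ holds since $p\ge n$, resp.\ $p\ge n-1$); then weak-times-strong with $j/p+(n-j)/p\le 1$ passes each term to the limit. With this grouping made explicit your proof is complete; as written, the phrase ``stay bounded \dots\ so that H\"older's inequality passes these mixed terms to the limit'' papers over the only nontrivial point of that step.
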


Here and subsquently, we use the symbol $\cdot$ to denote the Euclidean scalar product in all $\R^N$ spaces. For example, if $a,b\in\R^n$, $a\cdot b:=\sum_i a_i b_i$ and, if $a,b\in\Rnn$, $a\cdot b:=\sum_{ij} a_{ij}b_{ij}=\Tr a^Tb$.

\begin{proof}
The assertions follow readily from \cite[Theorem 2]{Murat1978}.
\end{proof}

We shall assume that material behavior is characterized by a material data set $\calD \subseteq \Xpq(\Omega)$ collecting all the possible states attainable by the material. An important class of materials are those which can be characterized locally.

\begin{definition}[Local materials]\label{deflocalmat}
A material is said to be local if there is a local material data set $\calD_\loc \subseteq \Rnn\times\Rnn$ such that $\calD = \{(F,P) \in \Xpq(\Omega) \, : \, (F(x),P(x)) \in \calD_\loc \text{ {for} a.~e.~ {$x\in$} } \Omega\}$.
\end{definition}

The local material data set $\calD_\loc$ is subject to the physical requirement of material-frame indifference.

\begin{definition}[Material-frame indifference]
We say that $\calD_\loc\subseteq\Rnn\times\Rnn$ is material-frame indifferent if $(QF,QP)\in \calD_\loc$ for every $(F,P)\in\calD_\loc$ and $Q\in \SO(n)$.
\end{definition}

\begin{remark}[Orbit representation]\label{remarkorbit}
The set $\calD_\loc\subseteq\Rnn\times\Rnn$ is material-frame indifferent if and only if it consists of orbits under the left action of $\SO(n)$. Equivalently, there is $\mathcal{U}_\loc \subseteq\Rnsym\times \Rnn$ such that $\calD_\loc=\{(QF,QP): Q\in\SO(n), \,(F,P)\in \mathcal{U}_\loc\}$. In general, it is not possible to restrict $\mathcal{U}_\loc\subseteq\Rnsym\times \Rnsym$.
\end{remark}

We additionally recall from Definition~\ref{defineE} that the equation of moment equilibrium is local and algebraic and, therefore, it can be used to further constrain the local material data set.

\begin{definition}[Moment equilibrium]\label{defmomequil}
We say that $\calD_\loc\subseteq\Rnn\times\Rnn$ satisfies moment equilibrium if $PF^T = (PF^T)^T \in \Rnsym$ for every $(F,P)\in \calD_\loc$.
\end{definition}

Another important example of local material data set concerns sets that, as in classical elasticity, take the form of graphs.

\begin{definition}[Graph local material data sets]
Let $T:\Rnn\to\Rnn$ be a function mapping deformations to stresses. Then, the classical material data set defined by $T$ is
\begin{equation}\label{eqdefDlocT}
 \DlocT:=\{(F,T(F)): F\in\Rnn\}.
\end{equation}
\end{definition}

A further special example consists of maps of the form $T=DW$, i.~e., maps that derive from strain-energy density functions $W:\Rnn\to\R$.

For graph material data sets, the relation between material-frame indifference and moment equilibrium is as follows.

\begin{lemma}
Let $T:\Rnn\to\Rnn$ be a stress-deformation function.
\begin{enumerate}
\item Suppose that $T(Q\xi)=QT(\xi)$ for all $Q\in\SO(n)$, $\xi\in\Rnn$. Then, $\DlocT$ is material-frame indifferent.
\item If, in addition, $T(F)F^T\in\Rnsym$ for all $F\in\Rnn$, then $\DlocT$ satisfies moment equilibrium.
\item If $T=DW$, with $W:\Rnn\to\R$ such that $W(QF)=W(F)$ for all $F\in\Rnn$ and $Q\in\SO(n)$, then $T$ obeys both stated properties, $\DlocT$ is material-frame indifferent and satisfies moment equilibrium.
\end{enumerate}
 \end{lemma}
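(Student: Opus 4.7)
For part (i), the plan is a direct verification: take any element of $\DlocT$, which by \eqref{eqdefDlocT} has the form $(F,T(F))$, and check that applying $Q\in\SO(n)$ on the left gives $(QF,QT(F))=(QF,T(QF))$ by the covariance hypothesis on $T$, so this pair again lies in $\DlocT$. By Remark~\ref{remarkorbit} this suffices for material-frame indifference.

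For part (ii), the plan is even shorter: by Definition~\ref{defmomequil} we must verify that every $(F,P)\in\DlocT$ satisfies $PF^T\in\Rnsym$, but each such pair reads $(F,T(F))$ and the hypothesis $T(F)F^T\in\Rnsym$ is precisely this.

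For part (iii), the plan is to deduce the two structural hypotheses of (i) and (ii) from the rotational invariance of $W$, after which (i) and (ii) themselves finish the proof. First I would fix $Q\in\SO(n)$ and differentiate the identity $W(QF)=W(F)$ in $F$ via the chain rule; in matrix-inner-product notation $\langle DW(QF),QH\rangle=\langle DW(F),H\rangle$ for all $H\in\Rnn$, which rearranges to $DW(QF)=Q\,DW(F)$, i.e. $T(QF)=QT(F)$. Second, I would exploit the infinitesimal form of the invariance: for any skew-symmetric $A\in\Rnn$ the curve $t\mapsto \exp(tA)\in\SO(n)$ lies in the rotation group, so $W(\exp(tA)F)=W(F)$ and differentiation at $t=0$ gives $\langle DW(F),AF\rangle=0$, which rewrites as $\Tr(A\,F\,DW(F)^T)=0$. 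Since this holds for every skew $A$, the matrix $F\,DW(F)^T=F\,T(F)^T$ must be symmetric, hence $T(F)F^T\in\Rnsym$. With both properties in hand, parts (i) and (ii) deliver material-frame indifference and moment equilibrium of $\DlocT$.

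The main obstacle is purely notational: one must be consistent about the convention by which $DW(F)\in\Rnn$ is identified with the Fr\'echet derivative via the Frobenius inner product $A\cdot B=\Tr(A^TB)$, so that the chain rule produces $DW(QF)=Q\,DW(F)$ rather than its transpose. Once that convention is fixed, both the algebraic step in part (iii) and the infinitesimal computation for symmetry of $T(F)F^T$ are routine, and no compactness or functional-analytic machinery from the earlier part of the paper is needed.
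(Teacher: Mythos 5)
Your proof is correct and fills in exactly the routine verification that the paper compresses into ``all assertions follow immediately from the definitions'': parts (i) and (ii) are the intended direct checks, and in part (iii) both the chain-rule identity $DW(QF)=Q\,DW(F)$ and the infinitesimal argument with $\exp(tA)$ for skew $A$ (yielding symmetry of $F\,T(F)^T$, hence of $T(F)F^T$) are the standard computations the authors had in mind.
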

\begin{proof}
All assertions follow immediately from the definitions.
\end{proof}

We now turn to the formulation of the Data-Driven problem of finite elasticity. In order to measure deviations from the material data set in phase space, we introduce a function $V:\Rnn\to\R$ with the following properties.

\begin{assumption}[Deviation function]\label{Ze6How}
Let $V:\Rnn\to[0,\infty)$ be convex with $V(0)=0$ and let $V^*(\eta):=\sup_\xi \xi \cdot\eta-V(\xi)$ denote the convex conjugate. In addition,
\begin{equation}\label{eqassVVst}
    c_p |\xi|^p\le V(\xi)
    \text{ and }
    c_q |\eta|^q\le V^*(\eta)
    \text{ for all  } \xi,\eta\in\Rnn,
\end{equation}
with $1/p+1/q=1$ as in Assumption~\ref{assumptiongeneral} and {for} some constants $c_p > 0$, $c_q > 0$.
For a {non-empty} local data set $\calD_\loc\subseteq\Rnn\times \Rnn$, we further define
$\psi=\psi_{\calD_\loc}:\Rnn\times \Rnn\to[0,\infty)$ by
\begin{equation}\label{SP5voKb}
    \psi_{\calD_\loc}(F,P):=\min\{ V(F-F')+V^*(P-P') \, : \, (F',P') \in \calD_\loc \}.
\end{equation}
\end{assumption}
For example, $V(\xi)=\frac1p|\xi|^p$ and $V^*(\eta)=\frac1q |\eta|^q$.  From these properties, it follows that the function $\psi$ {is} non-negative, grows away from $\calD_\loc$ and $\psi = 0$ if and only if $(F,P) \in \calD_\loc$. Thus, the function $\psi$ provides a measure of the deviation of local states from $\calD_\loc$.

\section{Compactness}\label{deReW8}

The work function $F\cdot P$, which has already appeared in the div-curl Lemma, plays a crucial role in the compactness proof. The use of \divcurl-convergence permits to phrase the compactness properties exclusively in terms of the local data set $\calD_\loc$, without resorting to the constraint sets $\Emb$ or $\Eg$, which depend on the boundary data and the forcing. Indeed, one key insight is that the quantity $F\cdot P$ is a null Lagrangian with respect to \divcurl-convergence. In contrast, the approach in \cite{ContiMuellerOrtiz2018-Datadriven}, formulated in terms of transversality, involved both the data set and the constraint set and, therefore, indirectly the boundary data and external forces.

\begin{definition}[Coercivity of the material data set]\label{defcoercive}
We say that a material data set $\calD_\loc\subseteq\Rnn\times\Rnn$ is $(p,q)$-coercive if there are $c_F, c_P, c>0$ such that
\begin{equation}\label{eqdatacoercive}
    \frac1{c_F} |F|^p+ \frac1{c_P}|P|^q-c
    \le F\cdot P , \hskip1cm\text{ for all } (F,P)\in \calD_\loc.
\end{equation}
\end{definition}
\begin{remark}\label{remTcoercive}
If  $\calD_\loc$ is the graph of $T:\Rnn\to\Rnn$, coercivity means that
\begin{equation}\label{eqTcoercive}
    \frac1{c_F} |\xi|^p+ \frac1{c_P}|T(\xi)|^q-c
    \le \xi\cdot T(\xi) , \hskip1cm\text{ for all } \xi\in\Rnn.
\end{equation}
If $T$ is continuous, then it obviously suffices to show that there are $c_F,c_P,c>0$ and $R>0$ such that (\ref{eqTcoercive}) holds for any $\xi$ with $|\xi|>R$.
\end{remark}
Below we discuss (Lemma \ref{le:coercive} and Example \ref{example2dcoerciv}) examples of data sets in both two and three spatial dimensions that are $(p,q)$-coercive and material frame indifferent.

\begin{lemma}\label{lemmacoercFP}
Under Assumption~\ref{assumptiongeneral} and Definition~\ref{defineE}, the sets $\Emb$ and $\Eg$ are closed with respect to weak convergence in $\Xpq(\Omega)$. Furthermore,
\begin{equation}\label{lemmacoercFPeq}
    \int_\Omega F\cdot P \,dx \le c_\calE\|(F,P)\|_{\Xpq(\Omega)}+c_\calE ,
\end{equation}
for all $(F,P)\in \Eg$. The constant $c_\calE$ may depend on the functions $h_N$, $g_D$, and $f$ entering the definition of $\Eg$, see Definition.~\ref{defineE}, as well as on $n$, $p$, $q$, $\Omega$, $\Gamma_D$ and $\Gamma_N$.
\end{lemma}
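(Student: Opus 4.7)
The plan is to prove the three assertions in sequence: weak closedness of $\Eg$, weak closedness of $\Emb$, and finally the integral inequality.

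For the weak closedness of $\Eg$, I observe that every constraint defining $\Eg$ is linear. Suppose $(F_k,P_k)\in\Eg$ with $(F_k,P_k)\weakto(F,P)$ in $\Xpq(\Omega)$. Writing $F_k=Du_k$ with $u_k=g_D$ on $\Gamma_D$, I fix a $W^{1,p}$-extension $G$ of $g_D$ to $\Omega$; then $u_k-G$ vanishes on $\Gamma_D$, and since $\calH^{n-1}(\Gamma_D)>0$ Poincaré's inequality yields a bound on $\|u_k\|_{W^{1,p}}$ in terms of $\|F_k\|_{L^p}$ and data norms. Extract a weakly convergent subsequence $u_k\weakto u$ in $W^{1,p}$; by weak continuity of the distributional gradient and of the Dirichlet trace one obtains $F=Du$ and $u|_{\Gamma_D}=g_D$. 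The conditions $\div P+f=0$ and $P\nu=h_N$ pass to the limit immediately, since both $\div$ and $B_\nu$ are continuous linear operators into the appropriate negative Sobolev spaces.

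For $\Emb$, it suffices to show that the moment-equilibrium constraint $FP^T=PF^T$ survives weak limits in $\Eg$. Since $\curl F_k=0$ and $\div P_k=-f$ are independent of $k$, they converge strongly (trivially) in $W^{-1,p}$ and $W^{-1,q}$ respectively, so the sequence is \divcurl-convergent in the sense of Definition~\ref{defdivcurlc}. Lemma~\ref{lemmadivcurl} then gives $F_kP_k^T\weakto FP^T$ as distributions. Each $F_kP_k^T$ is symmetric, so $F_kP_k^T-P_kF_k^T=0$, and the same identity holds in the limit; since $FP^T\in L^1(\Omega;\Rnn)$ by Hölder, the equality holds pointwise a.e.

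For the inequality, fix $(F,P)\in\Eg$ and a corresponding $u\in W^{1,p}(\Omega;\R^n)$. Using the integration-by-parts formula of Lemma~\ref{lemmatraceeq},
\begin{equation*}
\int_\Omega F\cdot P\,dx
=-\langle \div P,u\rangle+\langle B_\nu P, u|_{\partial\Omega}\rangle
=\int_\Omega f\cdot u\,dx+\langle B_\nu P,g_D\rangle+\langle h_N,u|_{\partial\Omega}-g_D\rangle,
\end{equation*}
where in the last step I split $u|_{\partial\Omega}=g_D+(u|_{\partial\Omega}-g_D)$ and exploited that the remainder vanishes $\calH^{n-1}$-a.e.\ on $\Gamma_D$, legitimating the use of the weak Neumann identity. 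Then Hölder bounds $\int_\Omega f\cdot u$ by $\|f\|_{L^q}\|u\|_{L^p}$; the Poincaré inequality with Dirichlet data on $\Gamma_D$ controls $\|u\|_{W^{1,p}}$ by $\|F\|_{L^p}$ plus norms of $g_D$; continuity of the trace $W^{1,p}(\Omega)\to W^{1/q,p}(\partial\Omega)$ handles $\|u|_{\partial\Omega}\|_{W^{1/q,p}}$; and the continuous dependence $\|B_\nu P\|_{W^{-1/q,q}(\partial\Omega)}\le C(\|P\|_{L^q}+\|\div P\|_{L^q})=C(\|P\|_{L^q}+\|f\|_{L^q})$ takes care of the first boundary pairing. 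Collecting constants yields (\ref{lemmacoercFPeq}).

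The main obstacle is the careful bookkeeping of the mixed boundary conditions: one needs to justify the splitting of $u|_{\partial\Omega}$ with $g_D$ viewed as a function on all of $\partial\Omega$ and to ensure that $u|_{\partial\Omega}-g_D$ qualifies as an admissible test function for the weak Neumann condition. All of the requisite trace and integration-by-parts machinery is packaged in the appendix, so the argument reduces to invoking those results in the right order.
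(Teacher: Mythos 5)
Your proposal is correct and follows essentially the same route as the paper: Poincar\'e plus the trace/integration-by-parts machinery of Lemma~\ref{lemmatraceeq} for the bound (with the same splitting $Bu=g_D+(Bu-g_D)$ to insert $h_N$), weak compactness of the displacements and continuity of $B$ and $B_\nu$ for the closedness of $\Eg$, and the div-curl Lemma for $\Emb$. The only cosmetic differences are the order of the arguments and your use of an extension of $g_D$ in the Poincar\'e step, where the paper bounds $\|u\|_{W^{1,p}}$ directly by $\|g_D\|_{W^{1/q,p}(\partial\Omega)}+\|Du\|_{L^p}$.
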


\begin{proof}
We start by proving (\ref{lemmacoercFPeq}). Let $u\in W^{1,p}(\Omega;\R^n)$ be as in the definition of $\Eg$. By Poincar\'e's inequality,
{(\ref{eqGammaD}) and the fact that $\calH^{n-1}(\Gamma_D)>0$,}
\begin{equation}\label{eqpoincareuf}
 \|u\|_{W^{1,p}(\Omega)}\le c \|g_D\|_{W^{1/q,p}(\partial\Omega)} + c
\|Du\|_{L^p(\Omega)},
\end{equation}
with a constant that depends only on $\Omega$ {and $\Gamma_D$}. We compute, using 
{Definition \ref{defineE}} and
(\ref{eqdualitybnub}) {of} Lemma \ref{lemmatraceeq},
\begin{align}
\begin{split}
\int_\Omega F\cdot P\, dx &= \int_\Omega Du\cdot P \,dx \\
&= \langle B_\nu P, Bu\rangle -\int_\Omega u\cdot \div P \, dx\\
&= \langle B_\nu P, Bu-g_D\rangle+\langle B_\nu P, g_D\rangle +\int_\Omega
u\cdot f \, dx\\
&= \langle h_N, Bu-g_D\rangle+\langle B_\nu P, g_D\rangle +\int_\Omega
u\cdot f \, dx
\end{split}
\end{align}
where we additionally use that $Bu-g_D=0$ on $\Gamma_D$, which up to an $\calH^{n-1}$-null set is the same as $\partial\Omega\setminus\Gamma_N$, in order to insert the boundary condition $h_N$
(the meaning of the boundary conditions was explained after
Definition~\ref{defineE}). Estimating all terms using duality, Lemma \ref{lemmatracew1p} and Lemma \ref{lemmatraceeq} gives
\begin{align}
\begin{split}
\int_\Omega F\cdot P \,dx
\leq& \|h_N\|_{W^{-1/q,q}(\partial\Omega)}
\|Bu-g_D\|_{W^{1/q,p}(\partial\Omega)} \\
&+
\|B_\nu P\|_{W^{-1/q,q}(\partial\Omega)}
\|g_D\|_{W^{1/q,p}(\partial\Omega)} \\
&+
\|u\|_{L^p(\Omega)} \|f\|_{L^q(\Omega)}\\
\leq & \|h_N\|_{W^{-1/q,q}(\partial\Omega)}
(\|g_D\|_{W^{1/q,p}(\partial\Omega)} +
\|u\|_{W^{1,p}(\Omega)}) \\
&+
(\|P\|_{L^{q}(\Omega)}+\|f\|_{L^{q}(\Omega)})
\|g_D\|_{W^{1/q,p}(\partial\Omega)} \\
&+
\|u\|_{L^p(\Omega)} \|f\|_{L^q(\Omega)}\\
\leq & c_\calE + c_\calE \|(F,P)\|_{\Xpq(\Omega)},
\end{split}
\end{align}
where in the last step we have used (\ref{eqpoincareuf}).
This concludes the proof of (\ref{lemmacoercFPeq}).

It remains to prove that $\Eg$ and $\Emb$ are weakly closed. Consider a sequence $(F_k,P_k)\in \Eg$ converging weakly to $(F,P)$ in $\Xpq(\Omega)$. Let $(u_k)$ be the corresponding sequence of displacements. By (\ref{eqpoincareuf}), the sequence $u_k$ is bounded in $W^{1,p}(\Omega;\R^n)$. After extracting a subsequence, we can assume that $u_k\weakto u$ in $W^{1,p}(\Omega;\R^n)$, with $Du=F$. Continuity of the trace operator implies that the traces also converge weakly, $Bu_k\weakto Bu$, in $L^p(\partial\Omega;\R^n)$. Since $Bu_k=g_D$ as an $L^p$ function on the relatively open set $\Gamma_D$, the same holds for $u$.

From $\div P_k\weakto \div P$ in the sense of distributions and $\div P_k=f$ for all $k$, we deduce $\div P=f$ in the sense of distributions. It remains to show that $P$ obeys the required boundary condition on $\Gamma_N$. To this end, we fix $\psi\in W^{1/q,p}(\partial\Omega;\R^n)$ and observe that
\begin{equation}
    v\mapsto  \langle B_\nu v,\psi\rangle
\end{equation}
defines a linear continuous mapping on $E^q(\Omega)$
(see Definition~\ref{defEqomega} and Lemma~\ref{lemmatraceeq}).
In particular, weak convergence $P_k\weakto P$ implies
\begin{equation}
    \langle B_\nu P,\psi\rangle =
    \lim_{k\to\infty} \langle B_\nu P_k,\psi\rangle.
\end{equation}
Assume now that $\psi=0$ almost everywhere on $\Gamma_D$. Then,
\begin{equation}
    \langle B_\nu P,\psi\rangle =
    \lim_{k\to\infty} \langle B_\nu P_k,\psi\rangle=
    \lim_{k\to\infty} \langle h_N ,\psi\rangle.
\end{equation}
Therefore, $P$ satisfies the boundary condition and $(F,P)\in\Eg$.

By the div-curl Lemma, Lemma~\ref{lemmadivcurl}, we additionally obtain
\begin{equation}
    F_kP_k^T\weakto FP^T \text{ in the sense of distributions.}
\end{equation}
In particular, if $F_k P_k^T\in\Rnsym$ almost everywhere for all $k$, then the same holds for $FP^T$. This implies that $\Emb$ is also closed.
\end{proof}

\begin{lemma}\label{lemmacoerc}
Let Assumption~\ref{assumptiongeneral} and Assumption~\ref{Ze6How} be in force. Let $\calD_\loc$ be
$(p,q)$-coercive in the sense of (\ref{eqdatacoercive}). Then,
\begin{equation}\label{9AdRus}
\begin{split}
    &
    \|F\|_{L^p(\Omega)}^p+
    \|P\|_{L^q(\Omega)}^q+
    \|F'\|_{L^p(\Omega)}^p+
    \|P'\|_{L^q(\Omega)}^q
     \\ &
   \hskip8mm \le c_\calE+c_\calE
    \int_\Omega \left( V(F-F')+V^*(P-P')\right)dx ,
\end{split}
\end{equation}
for all $(F,P)\in\Eg$ and $(F',P')\in \calD$.
\end{lemma}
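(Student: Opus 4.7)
\emph{Plan.} The approach is to combine the pointwise coercivity \eqref{eqdatacoercive} of $\calD_\loc$ with the integral bound on the work $\int_\Omega F\cdot P\,dx$ provided by Lemma~\ref{lemmacoercFP}, and to absorb all cross terms that appear by means of Young's inequality applied together with the lower bounds on $V$ and $V^*$ furnished by Assumption~\ref{Ze6How}.

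\emph{First main step.} I would apply \eqref{eqdatacoercive} pointwise to $(F'(x),P'(x))\in\calD_\loc$ and integrate to obtain
$$\frac{1}{c_F}\|F'\|_{L^p(\Omega)}^p+\frac{1}{c_P}\|P'\|_{L^q(\Omega)}^q \le c|\Omega|+\int_\Omega F'\cdot P'\,dx.$$
Expanding $F\cdot P=(F'+(F-F'))\cdot(P'+(P-P'))$ gives the algebraic identity
$$F'\cdot P'=F\cdot P-(F-F')\cdot P'-F'\cdot(P-P')-(F-F')\cdot(P-P').$$
After substitution, $\int_\Omega F\cdot P\,dx$ is controlled by Lemma~\ref{lemmacoercFP}, yielding the contribution $c_\calE+c_\calE\|(F,P)\|_{\Xpq(\Omega)}$. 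The fully quadratic term is handled by Fenchel--Young: $(F-F')\cdot(P-P')\le V(F-F')+V^*(P-P')$ pointwise.

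\emph{Absorption.} For each of the two mixed terms $(F-F')\cdot P'$ and $F'\cdot(P-P')$ I would apply scalar Young's inequality with a small parameter $\epsilon>0$ to separate off $\epsilon|P'|^q$ and $\epsilon|F'|^p$ respectively, and bound the remaining $|F-F'|^p$ and $|P-P'|^q$ factors by $V(F-F')/c_p$ and $V^*(P-P')/c_q$ using the bounds $c_p|\xi|^p\le V(\xi)$ and $c_q|\eta|^q\le V^*(\eta)$ from Assumption~\ref{Ze6How}. Similarly, I would dominate $c_\calE\|(F,P)\|_{\Xpq(\Omega)}$ by first splitting $\|F\|_{L^p(\Omega)}\le\|F'\|_{L^p(\Omega)}+\|F-F'\|_{L^p(\Omega)}$ and analogously for $P$, then applying Young's inequality again so that the $\|F'\|_{L^p}$, $\|P'\|_{L^q}$ pieces become small multiples of $\|F'\|_{L^p}^p$, $\|P'\|_{L^q}^q$, while the deviation pieces are again absorbed into $\int_\Omega\bigl(V(F-F')+V^*(P-P')\bigr)\,dx$. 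Choosing $\epsilon$ small enough so that the total coefficients of $\|F'\|_{L^p(\Omega)}^p$ and $\|P'\|_{L^q(\Omega)}^q$ on the right remain strictly smaller than $1/c_F$ and $1/c_P$ allows absorption on the left and gives
$$\|F'\|_{L^p(\Omega)}^p+\|P'\|_{L^q(\Omega)}^q\le c_\calE+c_\calE\int_\Omega\bigl(V(F-F')+V^*(P-P')\bigr)\,dx.$$
The corresponding bound for $\|F\|_{L^p(\Omega)}^p$ and $\|P\|_{L^q(\Omega)}^q$ is then added by the elementary inequality $\|F\|_{L^p(\Omega)}^p\le 2^{p-1}(\|F'\|_{L^p(\Omega)}^p+\|F-F'\|_{L^p(\Omega)}^p)$, its analogue for $P$, and the same lower bounds on $V$, $V^*$, completing \eqref{9AdRus}. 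The main obstacle is not conceptual but the bookkeeping of the several small parameters, which must be chosen compatibly to keep every absorption step strictly legitimate.
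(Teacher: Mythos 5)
Your proposal is correct and follows essentially the same route as the paper: coercivity of $\calD_\loc$ applied pointwise to $(F',P')$, Lemma~\ref{lemmacoercFP} applied to $(F,P)$, H\"older/Young for the cross terms, the lower bounds (\ref{eqassVVst}) to convert $|F-F'|^p$, $|P-P'|^q$ into $V$, $V^*$, and absorption of the small-parameter terms --- the paper merely packages the absorption as a single self-improving inequality between $A=\int_\Omega(V+V^*)\,dx$ and $B=\|F\|_{L^p(\Omega)}^p+\|P\|_{L^q(\Omega)}^q$. One cosmetic point: the term you must bound from above is $-(F-F')\cdot(P-P')$, and since $V$ is not assumed even you should not invoke Fenchel--Young with that sign but rather estimate $|(F-F')\cdot(P-P')|\le \tfrac{1}{p}|F-F'|^p+\tfrac{1}{q}|P-P'|^q\le \tfrac{1}{pc_p}V(F-F')+\tfrac{1}{qc_q}V^*(P-P')$, exactly as you already do for the mixed terms.
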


\begin{proof}
We write
\begin{equation}
     A:=\int_\Omega \left(V(F-F')+V^*(P-P')\right)dx
\end{equation}
for the {second term on the} right-hand side of (\ref{9AdRus}), and
\begin{equation}
     B:=\|F\|_{L^p(\Omega)}^p+\|P\|_{L^q(\Omega)}^q
\end{equation}
for the first two terms {on} the left-hand side.

From (\ref{eqassVVst}), we obtain
\begin{align}\label{eqf1p1fpsdf}
    \|F'-F\|_{L^p(\Omega)}^p+
    \|P'-P\|_{L^q(\Omega)}^q
    \le c A,
\end{align}
which immediately gives $ \|F'\|_{L^p(\Omega)}^p+ \|P'\|_{L^q(\Omega)}^q \le c (A+B)$.
At the same time, from (\ref{eqf1p1fpsdf}) and $|a+b|^p\le 2^{p-1} |a|^p+2^{p-1}|b|^p$ we have
\begin{equation}\label{eqbfpbppp}
\begin{split}
    B
    & =
    \|F\|_{L^p(\Omega)}^p+\|P\|_{L^q(\Omega)}^q
   \le cA+ 2^p\|F'\|_{L^p(\Omega)}^p+2^q\|P'\|_{L^q(\Omega)}^q.
\end{split}
\end{equation}

By Hölder's inequality,
\begin{align}\label{eqlemmacoerce3}
\begin{split}
    \int_\Omega (F'\cdot P'&-F\cdot P) dx
    \\ &
    =\int_\Omega (F'-F)\cdot P'+F\cdot (P'-P) dx
     \\ &
    \le\|F'-F\|_{L^p(\Omega)}
    \|P'\|_{L^q(\Omega)}+\|F\|_{L^p(\Omega)}\|P'-P\|_{L^q(\Omega)}
     \\ &
   \le cA+c A^{1/p} B^{1/q}+c A^{1/q} B^{1/p}.
\end{split}
\end{align}
By coercivity of $\calD_\loc$,
\begin{align}\label{eqlemmacoerce4}
    \|F'\|_{L^p(\Omega)}^p + \|P'\|_{L^q(\Omega)}^q
    \le
    c \int_\Omega F'\cdot P' \,dx +c.
\end{align}
By Lemma \ref{lemmacoercFP},
\begin{align}\label{eqlemmacoerce2}
    \int_\Omega F\cdot P \,dx
    \le
    c(\|F\|_{L^p(\Omega)}+\|P\|_{L^q(\Omega)}+1)\le c (B^{1/p}+B^{1/q}+1).
\end{align}
Collecting (\ref{eqlemmacoerce4}),  (\ref{eqlemmacoerce3}), (\ref{eqlemmacoerce2}), {in this order,} we obtain
\begin{equation}
\begin{split}
    \|F'\|_{L^p(\Omega)}^p&+\|P'\|_{L^q(\Omega)}^q
    \le 
    c \int_\Omega F'\cdot P' \,dx+c
     \\ &
   \le cA+c A^{1/p} B^{1/q}+c A^{1/q} B^{1/p}+c+
    c\int_\Omega F\cdot P \,dx
     \\ &
   \le cA+c A^{1/p} B^{1/q}+c A^{1/q} B^{1/p}+cB^{1/p}+cB^{1/q}+c.
\end{split}
\end{equation}
Recalling (\ref{eqbfpbppp}) we have, {by the inequality $ab\le \eps a^p+c_\eps b^q$,}
\begin{equation}
\begin{split}
    B
&   \le cA+ 2^p\|F'\|_{L^p(\Omega)}^p+2^q\|P'\|_{L^q(\Omega)}^q
     \\ &
   \le cA+c A^{1/p} B^{1/q}+c A^{1/q} B^{1/p}+cB^{1/q}+cB^{1/p}+c.
\end{split}
\end{equation}
Therefore, $B\le c +cA$, which concludes the proof.
\end{proof}

We are finally in a position to summarize our results and present a general compactness statement.
\begin{theorem}[Compactness]\label{theoremcompactness}
Let Assumption~\ref{assumptiongeneral} and Assumption~\ref{Ze6How} be in force. Let $\calD_\loc$ be
 $(p,q)$-coercive in the sense of (\ref{eqdatacoercive}). Let $(F_k,P_k)\in \Eg$ and $(F_k',P_k')\in \calD$ be such that
\begin{equation}
    \alpha:= \liminf_{k\to\infty} \int_\Omega \left(V(F_k-F_k')+V^*(P_k-P_k')\right) dx <\infty.
\end{equation}
Then:
\begin{itemize}
\item[i)] After extracting a subsequence, $(F_k,P_k)$ $\weakto$ $(F,P)$ and $(F_k',P_k')$ $\weakto$ $(F',P')$ weakly in $\Xpq(\Omega)$, with $(F,P)\in\Eg$.
\item[ii)] If, additionally, $\alpha=0$, then $F=F'$, $P=P'$, the sequences $\Delta$-converge to $((F,P),(F,P))$ and $(F_k',P_k')$ \divcurl-converges to $(F,P)$.
\item[iii)] If $(F_k,P_k)\in \Emb$ for all $k$, then $(F,P)\in\Emb$.
\end{itemize}
\end{theorem}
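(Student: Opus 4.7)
The plan is to assemble the statement from the three building blocks already proved: Lemma~\ref{lemmacoerc} (uniform bounds), Lemma~\ref{lemmacoercFP} (weak closedness of $\Eg$ and $\Emb$), and Lemma~\ref{lemmadeltatodivcurl} (passage from $\Delta$- to \divcurl-convergence). Since $\alpha<\infty$, we first pass to a subsequence, not relabeled, along which
\[
    \int_\Omega \bigl(V(F_k-F_k')+V^*(P_k-P_k')\bigr)\,dx \ \longrightarrow\ \alpha .
\]
In particular this integral is uniformly bounded in $k$, so Lemma~\ref{lemmacoerc} applied to each pair $(F_k,P_k)\in\Eg$, $(F_k',P_k')\in\calD$ yields a uniform bound on
$\|F_k\|_{L^p}^p+\|P_k\|_{L^q}^q+\|F_k'\|_{L^p}^p+\|P_k'\|_{L^q}^q$. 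By reflexivity of $\Xpq(\Omega)$ we extract a further subsequence with $(F_k,P_k)\rightharpoonup (F,P)$ and $(F_k',P_k')\rightharpoonup (F',P')$ in $\Xpq(\Omega)$. Since $\Eg$ is weakly closed by Lemma~\ref{lemmacoercFP}, $(F,P)\in\Eg$; this proves (i). For (iii), the same lemma shows that $\Emb$ is weakly closed, so if all $(F_k,P_k)\in\Emb$ then $(F,P)\in\Emb$.

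For (ii) suppose further $\alpha=0$. The coercivity condition (\ref{eqassVVst}) gives
\[
    c_p\|F_k-F_k'\|_{L^p(\Omega)}^p+c_q\|P_k-P_k'\|_{L^q(\Omega)}^q
    \le \int_\Omega \bigl(V(F_k-F_k')+V^*(P_k-P_k')\bigr)\,dx \ \longrightarrow\ 0,
\]
so $F_k-F_k'\to 0$ strongly in $L^p$ and $P_k-P_k'\to 0$ strongly in $L^q$. Uniqueness of weak limits then forces $F=F'$ and $P=P'$, and by definition the pair $((F_k',P_k'),(F_k,P_k))$ is $\Delta$-convergent to $((F,P),(F,P))$. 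Applying Lemma~\ref{lemmadeltatodivcurl} with $y_k=(F_k',P_k')$, $z_k=(F_k,P_k)\in\Eg$, we conclude that $(F_k',P_k')$ is \divcurl-convergent to $(F,P)$, giving (ii).

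There is no real obstacle here: all nontrivial work has been done upstream. The one delicate point to keep straight is the initial extraction of a subsequence along which the liminf is realised as a genuine limit, since Lemma~\ref{lemmacoerc} is stated pointwise in $k$ and only yields useful bounds once the deviation integral is under control on the chosen subsequence; everything else then reduces to weak closedness of $\Eg$ (and $\Emb$) together with the elementary coercivity of $V$ and $V^*$.
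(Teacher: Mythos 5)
Your proposal is correct and follows essentially the same route as the paper: Lemma~\ref{lemmacoerc} for the uniform bounds, weak closedness of $\Eg$ and $\Emb$ from Lemma~\ref{lemmacoercFP}, and the coercivity of $V,V^*$ plus Lemma~\ref{lemmadeltatodivcurl} for part (ii). Your explicit initial extraction of a subsequence realising the liminf is a point the paper leaves implicit, but it is the same argument.
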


We remark that in this compactness statement moment equilibrium is encoded into $\Emb$, not $\calD$.
If $\calD_\loc$ is \divcurl-closed then also an encoding in $\calD$ is possible, see Section~\ref{V7bisp}. This permits to work with the affine space $\Eg$, which is simpler to approximate or discretize.

\begin{proof}
By Lemma \ref{lemmacoerc}, the sequences $(F_k, P_k)$ and $(F_k', P_k')$ are uniformly bounded in $\Xpq(\Omega)$. Therefore, there is a common subsequence that converges weakly to some limits $(F,P)$ and $(F', P')$. The {first assertion in} Lemma \ref{lemmacoercFP} shows that $(F,P)\in\Eg$.

If $\alpha=0$, then by (\ref{eqassVVst}) we have that $F_k-F_k'\to0$ strongly in $L^p$ and $P_k-P_k'\to0$ strongly in $L^q$, which implies $\Delta$-convergence.
\Divcurl-convergence follows then {from} Lemma \ref{lemmadeltatodivcurl}. Indeed, since $\curl F_k=\curl F=0$ and $F_k'-F_k\to0$ strongly in $L^p$, we have
\begin{equation}
    \curl (F_k'-F)=\curl (F_k'-F_k) \to0 \text{ strongly in }
    W^{-1,p}(\Omega;\Rnnn) .
\end{equation}
Analogously, from $\div P_k=\div P=f$ and $P_k'-P_k\to0$ we obtain
\begin{equation}
    \div (P_k'-P)=\div (P_k'-P_k) \to0 \text{ strongly in } W^{-1,q}(\Omega;\R^n),
\end{equation}
which proves the \divcurl-convergence of $(F_k', P_k')$.

Finally, if $(F_k, P_k)\in \Emb$ the second part of Lemma \ref{lemmacoercFP} shows that $(F,P)\in \Emb$.
\end{proof}

\def\ceins{b}
\def\czwei{d}
In the rest of this section we provide examples of material-frame-indifferent sets that obey the stated conditions.
\begin{lemma}  \label{le:coercive}
Let $g\in C^1(\R)$ be convex and such that  for some ${\ceins}, {\czwei}\ge0$,
\begin{equation}\label{eqgcoerc}
    |g'(t)|\le {\ceins}+{\czwei}|t|\hskip1cm\text{ for all $t\in\R$.}
\end{equation}
\begin{enumerate}
\item Assume that $n=2$, $a>0$, ${\ceins}>0$ and  $0\le {\czwei}< 2a$. Then, the  material data set generated by
\begin{equation}
    W_2(\xi):=\frac12|\xi|^2+\frac14 a|\xi|^4 + g(\det \xi)
\end{equation}
is $(4,4/3)$-coercive, in the sense that $T_2:=DW_2$ obeys (\ref{eqTcoercive}) with $p=4$, $q=4/3$.
\item
Assume that $n=3$, $a\ge 0$, $ e>0$, ${\ceins}>0$ and $0\le {\czwei}< 3e$. Then, the  material data set generated by
\begin{equation}
    W_3(\xi):=\frac12|\xi|^2+\frac14 a|\xi|^4 +\frac16 e|\xi|^6 + g(\det \xi)
\end{equation}
is $(6,6/5)$-coercive.
\end{enumerate}
\end{lemma}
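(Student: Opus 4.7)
My plan is to verify the pointwise coercivity bound~(\ref{eqTcoercive}) of Remark~\ref{remTcoercive} directly. Since $T_i := DW_i$ is continuous, it suffices to establish the inequality for $|\xi|$ sufficiently large. Using the chain rule together with Jacobi's formula $D(\det\xi) = \cof\xi$, one computes
\begin{equation*}
  T_2(\xi)=\xi + a|\xi|^2\xi + g'(\det\xi)\cof\xi,
\end{equation*}
\begin{equation*}
  T_3(\xi)=\xi + a|\xi|^2\xi + e|\xi|^4\xi + g'(\det\xi)\cof\xi.
\end{equation*}
From the identity $\xi\cdot\cof\xi=n\det\xi$, which follows from the $n$-homogeneity of $\det$ together with Jacobi's formula, the work reduces to $\xi\cdot T_2(\xi)=|\xi|^2+a|\xi|^4+2g'(\det\xi)\det\xi$ and analogously $\xi\cdot T_3(\xi)=|\xi|^2+a|\xi|^4+e|\xi|^6+3g'(\det\xi)\det\xi$.

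The key step is to estimate the determinant term from below. Combining $|g'(t)|\le \ceins+\czwei|t|$ with the Hadamard--AM-GM bound $|\det\xi|\le |\xi|^n/n^{n/2}$ yields the pointwise estimate $|n g'(\det\xi)\det\xi|\le n\ceins|\det\xi|+n\czwei(\det\xi)^2$. In dimension two this specialises to $|2g'(\det\xi)\det\xi|\le \ceins|\xi|^2+\tfrac{\czwei}{2}|\xi|^4$, giving
\begin{equation*}
 \xi\cdot T_2(\xi)\ \ge\ (1-\ceins)|\xi|^2+\bigl(a-\tfrac{\czwei}{2}\bigr)|\xi|^4.
\end{equation*}
Under the hypothesis $\czwei<2a$ the leading coefficient is strictly positive, and the $|\xi|^2$ term is absorbed into $\tfrac12(a-\czwei/2)|\xi|^4+C$ by Young's inequality. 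The three-dimensional case proceeds analogously using $(\det\xi)^2\le|\xi|^6/27$; the sublinear contribution $\ceins|\det\xi|\lesssim|\xi|^3$ is absorbed via Young's inequality, so that one obtains $\xi\cdot T_3(\xi)\ge c_1|\xi|^6-C$ provided the leading coefficient remains positive, which is ensured by $\czwei<3e$.

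Next I estimate $|T_i(\xi)|$ from above via the triangle inequality and the standard bound $|\cof\xi|\le c_n|\xi|^{n-1}$, obtaining $|T_i(\xi)|\le c(1+|\xi|^{2n-1})$ and hence $|T_i(\xi)|^{q}\le c(1+|\xi|^{2n})$ with $q=2n/(2n-1)$, i.e.\ $q=4/3$ for $W_2$ and $q=6/5$ for $W_3$. Choosing $c_F$ and $c_P$ large enough that $1/c_F+c/c_P$ is strictly smaller than the leading coefficient obtained in the previous step, the quantity $\tfrac{1}{c_F}|\xi|^{2n}+\tfrac{1}{c_P}|T(\xi)|^{q}$ can be absorbed into the lower bound on $\xi\cdot T_i(\xi)$ up to an additive constant. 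This is exactly the pointwise coercivity bound~(\ref{eqTcoercive}).

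The main technical obstacle is the determinant estimate: $g'(\det\xi)\det\xi$ can grow quadratically in $\det\xi$ and hence like $|\xi|^{2n}$, so it has to be strictly dominated by the $a|\xi|^4$ (respectively $e|\xi|^6$) contribution coming from the polyconvex part of $W$. The conditions $\czwei<2a$ and $\czwei<3e$ in the hypotheses are precisely the structural inequalities that make this dominance possible.
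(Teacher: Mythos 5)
Your proposal is correct and follows essentially the same route as the paper's proof: compute $DW_i$, use $\xi\cdot\cof\xi=n\det\xi$ to reduce to $\xi\cdot T_i(\xi)=|\xi|^2+a|\xi|^4(+e|\xi|^6)+n\,g'(\det\xi)\det\xi$, control the determinant term via the growth bound on $g'$ and the AM--GM estimate $|\det\xi|\le|\xi|^n/n^{n/2}$ so that the conditions ${\czwei}<2a$ (resp.\ ${\czwei}<3e$) keep the top-order coefficient positive, and finish with $|T_i(\xi)|\le c(1+|\xi|^{2n-1})$. The only differences are cosmetic (you absorb the lower-order terms by Young's inequality rather than by restricting to large $|\xi|$).
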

Before proving the Lemma we show a concrete example.
\begin{example}\label{example2dcoerciv}
For all $a>0$ and $\beta\in(0,2a)$, the function $\hat W_2: \R^{2\times 2}\to\R$ defined by
\begin{equation}\label{eqdefhatw2}
    \hat W_2(\xi):=\frac12|\xi|^2+\frac14 a|\xi|^4 + \frac12 \beta (\det
    \xi-1-\frac{1+2a}{\beta})^2
\end{equation}
is invariant under rotations, $(4,4/3)$-coercive, and minimized by matrices in $\SO(2)$.
\end{example}

\begin{proof} The first two assertions are obvious from the definition and the previous lemma
using $g(t)=\frac12\beta(t-1-\frac{1+2a}{\beta})^2$, ${\ceins}=1+2a+\beta$,
${\czwei}=\beta$. To check the third, let $\xi\in\R^{2\times 2}$, and choose rotations $Q,R\in\SO(2)$ and $x,y\in\R$ such that $\xi=Q\diag(x,y)R$. Then,
\begin{equation}
\begin{split}
    \hat W_2(\xi)
    & =
    \hat W_2(\diag(x,y))
    \\ & =
    \frac12 (x^2+y^2)+\frac14 a (x^2+y^2)^2 +
    \frac12\beta(xy-1-\frac{1+2a}{\beta})^2.
\end{split}
\end{equation}
If $xy<0$, then $\hat W_2(\diag(x,y))>\hat W_2(\diag(|x|,|y|))$. Therefore, minimizers have $x,y\ge 0$. If $x\ne y$, the arithmetic-geometric mean inequality gives $\hat W_2(\diag(x,y))> \hat W_2(\diag((xy)^{1/2},(xy)^{1/2}))$. We are left with the case $x=y\ge0$, and
\begin{equation}
\begin{split}
    \hat W_2(\xi)&=\hat W_2(\diag(x,x))
    =x^2+a x^4 + \frac12\beta(x^2-1-\frac{1+2a}{\beta})^2\\
    &= (a+\frac\beta2) (x^2-1)^2+\frac2\beta (a+\frac12)^2+a+1.
\end{split}
\end{equation}
Hence, the minimum is attained if and only if $x=y=1$ and $\xi=QR\in\SO(2)$.
\end{proof}

\begin{proof}[Proof of Lemma~\ref{le:coercive}]
For $n=2$, we compute
\begin{equation}
    DW_2(\xi)=\xi+a|\xi|^2\xi+g'(\det\xi)\cof\xi
\end{equation}
and, recalling that $F\cdot\cof F=2\det F$,
\begin{equation}
\begin{split}
    \xi \cdot DW_2(\xi)&=|\xi|^2+a|\xi|^4+2 g'(\det\xi)\det \xi\\
    &\ge (a-\frac12 {\czwei}) |\xi|^4 + \frac12{\czwei}|\xi|^4 -2{\ceins}|\det\xi| -2{\czwei}
    |\det\xi|^2\\
    &\ge (a-\frac12 {\czwei}) |\xi|^4  -{\ceins}|\xi|^2 ,
\end{split}
\end{equation}
where we have used the growth of $g'$ and $|\xi|^2\ge 2\det \xi$. The last term can be absorbed in the fourth-order one if $\xi$ is sufficiently large. This proves that $|\xi|^4\le c \xi\cdot DW_2(\xi)+c$. With $|DW_2(\xi)|\le c|\xi|^{3}+c$, the proof is concluded.

The case $n=3$ is similar. Indeed, here $|\xi|^3\ge 3\det\xi$ and, therefore,
\begin{equation}
\begin{split}
    \xi \cdot DW_3(\xi)&=|\xi|^2+a|\xi|^4+e|\xi|^6+3 g'(\det\xi)\det \xi\\
    &\ge (e-\frac13 {\czwei}) |\xi|^6 + \frac13{\czwei}|\xi|^6 -3{\ceins}|\det\xi| -3{\czwei}
    |\det\xi|^2\\
    &\ge (e-\frac13 {\czwei}) |\xi|^6  -3{\ceins}|\det\xi| ,
\end{split}
\end{equation}
leading to $|\xi|^6\le c \xi\cdot DW_3(\xi)+c$. With $|DW_3(\xi)|\le c|\xi|^{5}+c$, the proof is concluded.
\end{proof}

\section{\divcurl-closed material data sets}\label{V7bisp}

\newcommand\nhg{g^*}
\newcommand\hatg{g}
\newcommand\nhf{f^*}
\newcommand\hatf{f}
\newcommand\hatnhg{\hat g^*}

In this section, we develop notions of closedness of the material data set conferring the Data-Driven problem (\ref{9lFapr}) sufficient lower-semicontinuity.
As in the case of coercivity, working with \divcurl-convergence permits us to formulate closedness in terms of the data set $\calD$ alone, without involving the boundary data and forcing terms.

\subsection{Locally and globally \divcurl-closed material data sets}

\begin{definition}[\divcurl-closed material data sets]\label{definitiondeltaclosed}
We say that a material data set $\calD\subseteq \Xpq(\Omega)$ is $(p,q)$-\divcurl-closed if the limits of all \divcurl-convergent sequences $(F_k,P_k)\in\calD\subseteq \Xpq(\Omega)$ are in $\calD$.
\end{definition}

We first show that this property can be localized, in the sense that the limits can be assumed to be constant.
{A related question on localization has been raised in \cite{RoegerSchweizerDD}.}

\begin{definition}[\divcurl-closed local material data sets]\label{definitionlocdeltaclosed}
We say that a local material data set $\calD_\loc\subseteq\Rnn\times\Rnn$ is locally $(p,q)$-\divcurl-closed if, for all bounded nonempty open sets $\omega\subseteq \R^n$, and
all $(F_*, P_*)\in\Rnn\times \Rnn$, the following holds:
if there is a sequence of pairs $(F_k,P_k)\in \Xpq(\omega)$ \divcurl-converging to the constant function $(F_*,P_*)$ and such that $(F_k, P_k)\in\calD_\loc$ almost everywhere, then $(F_*,P_*)\in \calD_\loc$.
\end{definition}

The set $\Xpq(\omega)$ is defined as in Definition \ref{defxpq}. If $\calD_\loc$ is locally \divcurl\ closed, then it is also closed as a subset of $\Rnn\times \Rnn$. However, the converse is not true, as the example $\{\pm e_1\otimes e_1\}\times \Rnn$ shows.

We first show that it suffices to consider a single instance for the domain $\omega$.

\begin{lemma}\label{lemmalocdivcclsetun}
 Let  $\calD_\loc\subseteq\Rnn\times\Rnn$, and fix an open bounded nonempty set $\hat\omega\subseteq\R^n$. Assume that
for any sequence $(F_k,P_k)\in\Xpq(\hat\omega)$ which \divcurl-converges to a constant function $(F_*,P_*)$  and obeys
$(F_k,P_k)\in\calD_\loc$ almost everywhere
one has $(F_*,P_*)\in \calD_\loc$. Then, $\calD_\loc$ is locally $(p,q)$-\divcurl-closed.
\end{lemma}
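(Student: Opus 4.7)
The plan is to reduce the general case to the assumed special case by an affine change of variables. Fix an arbitrary bounded nonempty open set $\omega\subseteq\R^n$, a pair $(F_*,P_*)\in\Rnn\times\Rnn$, and a sequence $(F_k,P_k)\in\Xpq(\omega)$ that \divcurl-converges to the constant $(F_*,P_*)$ and satisfies $(F_k(x),P_k(x))\in\calD_\loc$ for a.e.\ $x\in\omega$. I will transport this data to the reference domain $\hat\omega$ and invoke the standing hypothesis there.

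Since $\hat\omega$ is bounded and $\omega$ is open and nonempty, I can choose $x_0\in\omega$ and $\lambda>0$ small enough that the affine map $\Phi\colon\hat\omega\to\R^n$, $\Phi(y):=x_0+\lambda y$, satisfies $\Phi(\hat\omega)\subseteq\omega$. Defining $\tilde F_k:=F_k\circ\Phi$ and $\tilde P_k:=P_k\circ\Phi$ on $\hat\omega$, the inclusion $(\tilde F_k(y),\tilde P_k(y))\in\calD_\loc$ for a.e.\ $y\in\hat\omega$ is inherited directly from the pointwise inclusion for $(F_k,P_k)$. The weak convergences $\tilde F_k\weakto F_*$ in $L^p(\hat\omega;\Rnn)$ and $\tilde P_k\weakto P_*$ in $L^q(\hat\omega;\Rnn)$ then follow by testing against $L^q$, respectively $L^p$, functions on $\hat\omega$ and changing variables $x=\Phi(y)$, which reduces each statement to the given weak convergence of $F_k$ and $P_k$ on the open subset $\Phi(\hat\omega)\subseteq\omega$.

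The key step is to verify that \divcurl-convergence itself is preserved by this affine pull-back. A distributional computation gives $\curl\tilde F_k=\lambda\,(\curl F_k)\circ\Phi$ and $\div\tilde P_k=\lambda\,(\div P_k)\circ\Phi$. Since $F_*$ and $P_*$ are constants, the assumed \divcurl-convergence of $(F_k,P_k)$ to $(F_*,P_*)$ yields $\curl F_k\to 0$ strongly in $W^{-1,p}(\omega;\Rnnn)$ and $\div P_k\to 0$ strongly in $W^{-1,q}(\omega;\R^n)$. Composition with the affine diffeomorphism $\Phi$ defines a bounded linear map between the corresponding negative Sobolev spaces, by the standard duality estimate obtained by pulling test functions from $W^{1,r}_0(\hat\omega)$ back through $\Phi^{-1}$ and extending by zero to $\omega$. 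Hence $\curl\tilde F_k\to 0$ in $W^{-1,p}(\hat\omega;\Rnnn)$ and $\div\tilde P_k\to 0$ in $W^{-1,q}(\hat\omega;\R^n)$. Thus $(\tilde F_k,\tilde P_k)$ \divcurl-converges to the constant $(F_*,P_*)$ on $\hat\omega$, and the hypothesis on $\hat\omega$ delivers $(F_*,P_*)\in\calD_\loc$.

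I expect the only mildly delicate point to be the continuity of the pull-back on negative Sobolev spaces; this is routine, but it is the one place where the argument does more than book-keep pointwise and weak convergences.
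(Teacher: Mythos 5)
Your proof is correct and follows essentially the same strategy as the paper: choose an affine map $\Phi(y)=x_0+\lambda y$ (the paper writes $\alpha\hat\omega+\beta\subseteq\omega$) sending $\hat\omega$ into $\omega$, pull the sequence back, check that weak convergence and the pointwise inclusion in $\calD_\loc$ survive, and verify that the \divcurl-conditions are preserved. The only point where you diverge is the one delicate step you yourself flag: the paper transfers the strong convergence $\curl F_k\to 0$ in $W^{-1,p}$ by first representing $\curl F_k=g_k+\div g_k^*$ with $L^p$ functions (Ziemer, Th.~4.3.3) and rescaling these representatives explicitly, whereas you argue that composition with $\Phi$ is a bounded linear map on the negative Sobolev spaces via duality, pulling test functions in $W^{1,q}_0(\hat\omega)$ forward through $\Phi^{-1}$ and extending by zero. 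Both arguments are valid; yours is slightly more direct, while the paper's representation-based version makes the $\lambda$-dependence of the norms explicit (which costs nothing here since $\lambda$ is fixed). No gap.
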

\begin{proof}
Fix an open bounded set $\omega$ and a sequence $(F_k,P_k)\in \Xpq(\omega)$ \divcurl-converging to a constant function $(F_*,P_*)$ and such that $(F_k, P_k)(x)$ $\in$ $\calD_\loc$ for almost every $x\in\omega$. We need to show that $(F_*, P_*)\in\calD_\loc$.

Since both $\omega$ and $\hat\omega$ are open and bounded, there are $\alpha>0$ and $\beta\in\R^n$ such that $\alpha\hat\omega+\beta\subseteq\omega$. We define, for $x\in\hat\omega$,
\begin{equation}
 \hat F_k(x):=F_k(\alpha x + \beta) \text{ and }
 \hat P_k(x):=P_k(\alpha x + \beta).
\end{equation}
Then, $(\hat F_k,\hat P_k)\in \Xpq(\hat\omega)$, and they converge weakly to the constant function $(F_*, P_*)$.

We next show that
\begin{equation}\label{eqcurlhatfkohst}
    \curl \hat F_k\to0, \quad \text{in } W^{-1,p}(\hat\omega;\Rnnn),
\end{equation} 
and
\begin{equation}\label{eqdivhatfkohst}
    \div \hat P_k\to0, \quad \text{in } W^{-1,q}(\hat\omega;\R^{n}) .
\end{equation}
To that end, we choose $\hatg_k\in L^p(\omega;\Rnnn)$ and $\nhg_k\in L^p(\omega;\Rnnnn)$ such that
\begin{equation}
    \curl F_k=\hatg_k + \div \nhg_k
\end{equation}
and
\begin{equation}
  \|\hatg_k\|_{L^p(\omega)} + \|\nhg_k\|_{L^p(\omega)}
  \le c \|\curl F_k\|_{W^{-1,p}(\omega)}\to0
\end{equation}
(see for example \cite[Th.~4.3.3]{Ziemer1989-weaklydiff-book}).
We then define, for $x\in\hat\omega$,
\begin{equation}
\text{$\hat \hatg_k(x):=\alpha\hatg_k(\alpha x+ \beta)$ and
$\hatnhg_k(x):=\nhg_k(\alpha x+ \beta)$,  }
\end{equation}
so that
$\curl\hat F_k=\hat\hatg_k+\div \hatnhg_k$. From
\begin{equation}
 \|\hat\hatg_k\|_{L^p(\hat\omega)} + \|\hatnhg_k\|_{L^p(\hat\omega)}
\le \alpha^{1-n/p}
  \|\hatg_k\|_{L^p(\omega)} + \alpha^{-n/p}\|\nhg_k\|_{L^p(\omega)}
 \to0
\end{equation}
we obtain (\ref{eqcurlhatfkohst}). The proof of (\ref{eqdivhatfkohst}) is analogous.

Therefore, $(\hat F_k,\hat P_k)$ is \divcurl-convergent to $(F_*, P_*)$, and $(\hat F_k, \hat P_k)\in \calD_\loc$ pointwise almost everywhere. The assumption then yields $(F_*, P_*)\in\calD_\loc$, which concludes the proof.
\end{proof}

\begin{proposition}[Equivalence of local and global closedness]\label{propdeltaclosed}
Assume that $\calD:=\{(F,P) \in \Xpq(\Omega): (F,P)\in\calD_\loc \text{ a.~e.}\}$ for some $\calD_\loc\subseteq\Rnn\times \Rnn$. Then, the set $\calD$ is \divcurl-closed  if and only if $\calD_\loc$ is locally \divcurl-closed.
\end{proposition}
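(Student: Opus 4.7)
The proof splits into two directions, with the forward direction being immediate via Lemma \ref{lemmalocdivcclsetun} and the reverse requiring a blow-up/rescaling argument at Lebesgue points.

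\textbf{($\Rightarrow$)} Suppose $\calD$ is \divcurl-closed. By Lemma \ref{lemmalocdivcclsetun} it suffices to verify the local property on one specific bounded open set, so I take $\hat\omega=\Omega$. If $(F_k,P_k)\in\Xpq(\Omega)$ has values in $\calD_\loc$ a.e.\ and \divcurl-converges to a constant $(F_*,P_*)$, then $(F_k,P_k)\in\calD$ by definition; \divcurl-closedness of $\calD$ forces the constant limit to lie in $\calD$, whence $(F_*,P_*)\in\calD_\loc$.

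\textbf{($\Leftarrow$)} Suppose $\calD_\loc$ is locally \divcurl-closed, and let $(F_k,P_k)\in\calD$ be \divcurl-convergent to $(F,P)$. The plan is to show $(F(x_0),P(x_0))\in\calD_\loc$ for a.e.\ $x_0\in\Omega$ by building a sequence on the unit ball $B_1$ that \divcurl-converges to the constant $(F(x_0),P(x_0))$ with values in $\calD_\loc$, then applying the local closedness hypothesis on $\hat\omega=B_1$. For the setup, I first observe that, since $\Omega$ is bounded, duality with the Rellich embedding $W^{1,q}_0\hookrightarrow L^q$ makes the inclusion $L^p(\Omega)\hookrightarrow W^{-1,p}(\Omega)$ compact, so $F_k\to F$ strongly in $W^{-1,p}(\Omega)$ and analogously for $P_k$. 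After passing to a further subsequence I may assume $|F_k|^p\,dx\weaklystar\mu_F$ and $|P_k|^q\,dx\weaklystar\mu_P$ as Radon measures. Then I fix $x_0$ that is simultaneously an $L^p$-Lebesgue point of $F$, an $L^q$-Lebesgue point of $P$, and a point where the Radon--Nikodym densities of $\mu_F,\mu_P$ with respect to Lebesgue measure are finite; all of these conditions hold a.e.

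For the construction, pick $r_m\to 0$ with $\overline{B_{r_m}(x_0)}\subset\Omega$ and, by a diagonal extraction, $k_m\to\infty$ such that $\|F_{k_m}-F\|_{W^{-1,p}(\Omega)}+\|\curl F_{k_m}-\curl F\|_{W^{-1,p}(\Omega)}\le r_m^{n/p+2}$ together with the analogous bound in which $p\leftrightarrow q$ and $\curl\leftrightarrow\div$. Define $\tilde F_m(y):=F_{k_m}(x_0+r_m y)$ and $\tilde P_m(y):=P_{k_m}(x_0+r_m y)$ on $B_1$. These lie in $\calD_\loc$ a.e.\ trivially, and boundedness in $\Xpq(B_1)$ follows from the finite densities of $\mu_F,\mu_P$ at $x_0$. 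Weak convergence $\tilde F_m\weakto F(x_0)$ in $L^p(B_1)$ is obtained on the dense class $\phi\in W^{1,q}_0(B_1)$ via the splitting $\tilde F_m-F(x_0)=[\tilde F_m-F(x_0+r_m\cdot)]+[F(x_0+r_m\cdot)-F(x_0)]$: the second summand vanishes strongly by the Lebesgue-point property, while the first satisfies the change-of-variables estimate $|\langle\tilde F_m-F(x_0+r_m\cdot),\phi\rangle_{B_1}|\le c\,r_m^{-n/p-1}\|F_{k_m}-F\|_{W^{-1,p}(B_{r_m}(x_0))}\|\phi\|_{W^{1,q}_0(B_1)}=O(r_m)$ by the choice of $k_m$. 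Strong convergence $\curl\tilde F_m\to 0$ in $W^{-1,p}(B_1)$ uses the rescaling identity $\|\curl\tilde F_m\|_{W^{-1,p}(B_1)}\le c\,r_m^{-n/p}\|\curl F_{k_m}\|_{W^{-1,p}(B_{r_m}(x_0))}$ together with the decomposition $\curl F_{k_m}=(\curl F_{k_m}-\curl F)+\curl(F-F(x_0))$: the first term is $O(r_m^2)$ by the choice of $k_m$, and the second is bounded by $c\|F-F(x_0)\|_{L^p(B_{r_m}(x_0))}=o(r_m^{n/p})$ by the Lebesgue-point property. The symmetric argument applies to $\tilde P_m$, and local \divcurl-closedness of $\calD_\loc$ on $\hat\omega=B_1$ then yields $(F(x_0),P(x_0))\in\calD_\loc$.

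\textbf{Main obstacle.} The delicate bookkeeping is the Lebesgue-point estimate $\|\curl F\|_{W^{-1,p}(B_r(x_0))}=o(r^{n/p})$ (and its $\div$-analogue), which crucially depends on the ability to subtract the constant $F(x_0)$ inside the curl operator; without such a bound the distributional curl of the limit $F$ would survive the rescaling and prevent \divcurl-convergence of $\tilde F_m$ to a constant. A secondary issue---possible concentration of $|F_k|^p$ and $|P_k|^q$---is handled by working at a point $x_0$ in the full-measure set where the Radon--Nikodym densities of the measures $\mu_F,\mu_P$ are finite.
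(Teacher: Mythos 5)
Your proof is correct, and its skeleton coincides with the paper's: the easy direction is exactly the paper's (reduce to the single domain $\hat\omega=\Omega$ via Lemma~\ref{lemmalocdivcclsetun}), and the hard direction is the same blow-up at Lebesgue points with concentration controlled by weak-$*$ limits of the density measures. The one step you treat by a genuinely different device is the strong convergence $\curl\tilde F_m\to0$ in $W^{-1,p}(B_1)$ (and its $\div$ analogue). The paper fixes a decomposition $\curl(F_k-F)=\|\curl(F_k-F)\|_{W^{-1,p}(\Omega)}\,(g_k+\div g_k^*)$ with $g_k,g_k^*$ bounded in $L^p(\Omega)$, adds $|g_k|^p+|g_k^*|^p$ to the measures $\mu_k$, and concludes because the rescaled bracket stays bounded in $W^{-1,p}(B_1)$ while the prefactor vanishes --- no decay rate is needed. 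You instead scale the negative norm directly and beat the unfavorable factor $r_m^{-n/p}$ by a diagonal choice of $k_m$ with $\|\curl F_{k_m}-\curl F\|_{W^{-1,p}(\Omega)}\le r_m^{n/p+2}$, handling the remaining piece $\curl(F-F(x_0))=o(r_m^{n/p})$ by the Lebesgue-point property exactly as the paper does. Your exponents check out: the factor $r_m$ in $\curl_y\tilde F_m=r_m(\curl F_{k_m})(x_0+r_m\,\cdot)$ compensates the $r_m^{-1}$ from rescaling the test function's gradient, leaving $r_m^{-n/p}$ --- provided you use the homogeneous dual norm on $B_{r_m}$ and note that restriction from $\Omega$ to $B_{r_m}$ is bounded uniformly in $r_m$. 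Your route avoids the $L^p+\div L^p$ decomposition at the price of tracking dilation exponents of negative norms; the paper's avoids imposing any rate on the diagonal subsequence. One small bookkeeping point: the boundedness of $(\tilde F_m,\tilde P_m)$ in $\Xpq(B_1)$ needs $k_m$ to also be taken large enough that $\int_{B_{r_m}(x_0)}|F_{k_m}|^p\,dx\le C r_m^n$, since the weak-$*$ convergence of $|F_k|^p\,dx$ only gives $\limsup_k$ control of $\mu_{F}(\overline B_{r_m}(x_0))$; fold this condition into your diagonal extraction, as the paper does when choosing its $k_j$.
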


\begin{proof}
Assume that $\calD$ is \divcurl-closed. Choose $(F_*,P_*)\in \Rnn\times\Rnn$. We can identify this pair with constant functions $(\hat F_*,\hat P_*) \in\Xpq(\Omega)$. Assume that there is a sequence $(F_k, P_k){\in \Xpq(\Omega)}$ that \divcurl-converges to $(\hat F_*,\hat P_*)$ as in Definition \ref{definitionlocdeltaclosed}. We need to show that $(F_*, P_*)\in \calD_\loc$. Since these sequences fulfill the properties stated in Definition \ref{definitiondeltaclosed}, the fact that $\calD$ is \divcurl-closed gives $(\hat F_*,\hat P_*)\in\calD$, which is equivalent to $(F_*, P_*)\in \calD_\loc$.

We now come to the difficult direction, in which we need to localize. Fix a sequence $(F_k,P_k)\in \calD\subseteq\Xpq(\Omega)$ \divcurl-converging to some $(F,P)\in \Xpq(\Omega)$. Assume that $\calD_\loc$ is locally \divcurl-closed. We need to show that $(F,P)\in \calD_\loc$ pointwise almost everywhere.

Since $\curl (F_k-F)\in W^{-1,p}(\Omega;\Rnnn)$, there are
$\hatg_k\in L^p(\Omega;\Rnnn)$ and
$\nhg_k\in L^p(\Omega;\Rnnnn)$ such that
\begin{equation}\label{eqcurlfkfggk}
    \curl (F_k-F)= \|\curl (F_k-F)\|_{W^{-1,p}(\Omega)} ( \hatg_k + \div \nhg_k) ,
\end{equation}
with
\begin{equation}
 \sup_k \|\hatg_k\|_{L^p(\Omega)} + \|\nhg_k\|_{L^p(\Omega)} < \infty
\end{equation}
(see for example \cite[Th.~4.3.3]{Ziemer1989-weaklydiff-book}). Correspondingly, there are
$\hatf_k\in L^q(\Omega;\R^n)$ and $\nhf_k\in L^q(\Omega;\Rnn)$ such that
\begin{equation}
    \div (P_k - P)= \|\div (P_k-P)\|_{W^{-1,q}(\Omega)} ( \hatf_k + \div \nhf_k) ,
\end{equation}
with
\begin{equation}
 \sup_k \|\hatf_k\|_{L^q(\Omega)} + \|\nhf_k\|_{L^q(\Omega)} < \infty.
\end{equation}
We define the measures $\mu_k$ on $\Omega$ as
\begin{equation}
 \mu_k:= (|F_k|^p+|P_k|^q+|\hatg_k|^p+|\nhg_k|^p+|\hatf_k|^q+|\nhf_k|^q)\calL^n.
\end{equation}
These measures are uniformly bounded, and hence, after taking a subsequence, have a weak limit $\mu$.

We fix $x_*\in\Omega$ such that $x_*$ is a Lebesgue point of $(F,P)$ and
\begin{equation}\label{eqdmudcall}
\frac{d\mu}{d\calL^n}(x_*)<\infty.
\end{equation}
In addition, we let $F_*:=F(x_*)$, $P_*:=P(x_*)$. Since these properties hold for $\calL^n$-almost any $x_*$, it suffices to show that $(F_*,P_*)\in\calD_\loc$.

By (\ref{eqdmudcall}) there are $r_0>0$, $C_0>0$ such that $\overline B_{r_0}(x_*)\subseteq\Omega$ and
\begin{equation}
 \mu(\overline B_r(x_*))< C_0r^n\hskip5mm\text{ for all $r\le r_0$. }
\end{equation}
For any function $\varphi\in L^p(\Omega;\Rnn)$ and $r<\dist(x_*, \partial\Omega)$, we define the blow-up $R_r\varphi\in L^p(B_1;\Rnn)$ by $(R_r\varphi)(y):=\varphi(x_*+ry)$. Then,
\begin{equation}\label{eqblowup}
    \int_{B_1}|R_r\varphi|^p dy
    =
    \frac{1}{r^n} \int_{B_r(x_*)} |\varphi|^p dz ,
\end{equation}
and the analogous identity holds for the $L^q$ norm.

We fix a norm $\|\cdot \|_{w,p}$ that induces the weak $L^p(B_1;\Rnn)$ topology on bounded {subsets of} $L^p(B_1;\Rnn)$ and analogously $\|\cdot\|_{w,q}$.

Fix any sequence $r_j\to0$, with $r_j\in (0,r_0)$ for all $j$. From $\mu_k \rightharpoonup \mu$, we have
\begin{equation}
    \limsup_{k\to\infty}
    \mu_k(B_{r_j}(x_*))\le \mu(\overline B_{r_j}(x_*)) < C_0 r_j^n
\end{equation}
and deduce that there is $k_j\ge j$ such that $\mu_k(B_{r_j}(x_*))\le C_0 r_j^n$ for all $k\ge k_j$. This implies, using (\ref{eqblowup}) and the definition of $\mu_k$,
\begin{equation}\label{eqboundedc0r}
\begin{split}
    & \|R_{r_j}F_{k}\|_{L^p(B_1)}^p+
    \|R_{r_j}P_{k}\|_{L^q(B_1)}^q+\|R_{r_j}\hatg_k\|_{L^p(B_1)}^p+\|R_{r_j} \hatf_k\|_{
    L^q(B_1)}^q
     \\ & \hskip1cm +
    \|R_{r_j}\nhg_k\|_{L^p(B_1)}^p+\|R_{r_j}\nhf_k\|_{
    L^q(B_1)}^q
    \le C_0 \text{ for all } k\ge
    k_j.
 \end{split}
\end{equation}
As $k\to\infty$, we have $(R_{r_j}F_{k}, R_{r_j}P_{k})-(R_{r_j}F, R_{r_j}P)\weakto 0$ in $\Xpq(B_1)$. Therefore, there is $\hat k_j\ge k_j$ such that
\begin{equation}\label{eqweakconvmed}
\begin{split}
    \|R_{r_j}F_{k}-R_{r_j}F\|_{w,p}+\|R_{r_j}P_{k}-R_{r_j}P\|_{w,q}&\le
    \frac1j \text{ for all } k\ge \hat k_j.
\end{split}
\end{equation}
We define $\hat F_j:=R_{r_j}F_{\hat k_j}\in L^p(B_1;\Rnn)$,  $\hat P_j:=R_{r_j}P_{\hat k_j} \in L^q(B_1;\Rnn)$. 
{Since} $x_*$ is a Lebesgue point of $(F,P)$ and $r_j\to0$,
\begin{equation}\label{eqrjfp}
    (R_{r_j}F,R_{r_j}P) \to (F_*, P_*) \text { strongly in } \Xpq(B_1).
\end{equation}
{Further,}
 $(\hat F_j,\hat P_j)\in\calD_\loc$ a.~e.~and, by (\ref{eqweakconvmed}) { and (\ref{eqboundedc0r})}, 
\begin{equation}\label{eq37weak}
    (\hat F_j, \hat P_j)-(R_{r_j}F,R_{r_j}P) \weakto 0
    \text { weakly in } \Xpq(B_1).
\end{equation}
which implies, with (\ref{eq37weak}),
\begin{equation}\label{eqhatfjpj}
    (\hat F_j, \hat P_j)\weakto (F_*,P_*) \text { weakly in } \Xpq(B_1).
\end{equation}
We recall that we identify $(F_*,P_*)=(F(x_*),P(x_*))\in\Rnn\times\Rnn$ with a constant function on $B_1$.

Furthermore,
\begin{equation}
\begin{split}
    \curl (\hat F_j-F_*) =& \curl (R_{r_j}(F_{\hat k_j}-F)) + \curl(R_{r_j} F-F_*)
    \\ = &
    r_j R_{r_j} \curl (F_{\hat k_j}-F) +  \curl(R_{r_j} F-F_*) .
\end{split}
\end{equation}
Equation (\ref{eqrjfp}) implies $R_{r_j} F-F_*\to0$ in $L^p(B_1;\Rnn)$ and, therefore, $\curl(R_{r_j} F-F_*)\to0$ in $W^{-1,p}(B_1;\Rnnn)$. At the same time, (\ref{eqcurlfkfggk}) gives
\begin{equation}
\begin{split}
    r_j R_{r_j} &\curl (F_{\hat k_j}-F)
     \\ &
    =\|  \curl (F_{\hat k_j}-F) \|_{W^{-1,p}(\Omega)} (r_j R_{r_j} \hatg_{\hat k_j} +
     \div R_{r_j}\nhg_{\hat k_j}).
\end{split}
\end{equation}
By (\ref{eqboundedc0r}), we have $\| R_{r_j} \hatg_{\hat k_j}\|_{L^p(B_1)}^p +  \| R_{r_j} \nhg_{\hat k_j}\|_{L^p(B_1)}^p\le C_0 $, hence $ (r_j R_{r_j} \hatg_{\hat k_j} +  \div R_{r_j}\nhg_{\hat k_j})$ is bounded in $W^{-1,p}(B_1;\R^{n\times n\times n})$. Recalling the assumption $\|  \curl (F_{\hat k_j}-F) \|_{W^{-1,p}(\Omega)}\to0$, we conclude that
\begin{equation}
    \curl (\hat F_j-F_*) \to0 \text{ strongly in } W^{-1,p}(B_1;\Rnnn).
\end{equation}
An analogous computation leads to
\begin{equation}\label{propdeltaclosedfin}
    \div(\hat P_j-P_*) \to0 \text{ strongly in } W^{-1,q}(B_1;\R^n).
\end{equation}
Since $\calD_\loc$ is locally \divcurl-closed, this implies $(F_*,P_*)\in\calD_\loc$, which concludes the proof.
\end{proof}

\begin{example}
The set $\calD_\loc^\ge:=\{(F,P)\in\Rnn\times \Rnn: \det F\ge 0\}$ is $(p,q)$-\divcurl-closed for $p\ge n$. The set $\calD_\loc^>:=\{(F,P)\in\Rnn\times \Rnn: \det F> 0\}$ is not. If $\calD'_\loc$ and $\calD''_\loc$ are \divcurl-closed, then so is the intersection.
\end{example}
\begin{proof}
By the \divcurl~Lemma, Lemma \ref{lemmadivcurl},  if $(F_k, P_k)$ is \divcurl-convergent to $(F,P)$ then $\det F_k$ is weakly convergent to $\det F$. Therefore, $\det F_k\ge0$ implies $\det F\ge0$. For the second assertion, it suffices to consider $(F_k,P_k)(x):=(k^{-1}\Id,0)$. The third one is immediate from the definition.
\end{proof}

\subsection{Strict polymonotonicity and quasimonotonicity}

{In this Section we introduce suitable concepts of strict polymonotonicity and strict quasimonotonicity that can be used to establish that} specific material data sets generated by stress-strain functions are \divcurl-closed. We additionally discuss {explicit} examples in two and three dimensions.

{We denote by $M(G)\in \R^{\tau(n)}$ the vector of all minors of $G$, $M(G)=(G,\det G)\in \R^5$ if $n=2$ and $M(G)=(G,\cof G, \det G)\in \R^{19}$ if $n=3$.}
\begin{definition}[Strict polymonotonicity]\label{defpolymonotone}
We say that $T:\Rnn\to\Rnn$ is strictly polymonotone if there is a function $B\in C^0(\Rnn\times \Rnn;[0,\infty))$ such that $B(F,G)=0$ if and only if $G=0$ and
\begin{equation}\label{eqpolymonot}
    (T(F+G)-T(F))\cdot G \ge A(F)\cdot M(G)+B(F,G) ,
\end{equation}
for all $F,G\in\Rnn$ and some $A:\Rnn\to \R^{\tau(n)}$. 
\end{definition}

\begin{remark}\label{remarkdivcurlminors}
  By the \divcurl~Lemma, Lemma \ref{lemmadivcurl}, if $(F_k, P_k)$ is \divcurl-convergent to $(F,P)$ and $p\ge n$
  then $M(F_k)\weakto M(F)$ and $M(F_k-F)\weakto0$ in the sense of distributions.
\end{remark}

A more general condition is furnished by quasimonotonicity. However, this condition {is} difficult to verify in practice, unless some variant of polymonotonicity holds. In the present setting, quasimonotonicity can be defined as follows. 

\begin{definition}[Strict quasimonotonicity]\label{defquasimonotone} We say that a Borel measurable, locally bounded function $T:\Rnn\to\Rnn$ is strictly quasimonotone if there is a function $B\in C^0(\Rnn\times \Rnn;[0,\infty))$ such that $B(F,G)=0$ if and only if $G=0$ and
\begin{equation}\label{eqdefquasimon}
\int_\omega (T(F+D\varphi)-T(F))\cdot D\varphi \, dx \ge \int_\omega B(F,D\varphi) dx
\end{equation}
for all bounded Lipschitz sets $\omega\subseteq\R^n$,  all $F\in\Rnn$, and all $\varphi\in C^\infty_c(\omega;\R^n)$.
\end{definition}
We refer to \cite{Zhang1988-quasimonotone} for further discussion of quasimonotonicity and its relevance to the existence of solutions of the equilibrium equations $\div  T(\nabla u) = 0$.

\begin{lemma}\label{lemmapolymonquasimon}
Let  $T:\Rnn\to\Rnn$ be Borel measurable, locally bounded and strictly polymonotone. Then, {$T$} is strictly quasimonotone.
\end{lemma}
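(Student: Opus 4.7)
The plan is to integrate the pointwise polymonotonicity inequality (\ref{eqpolymonot}) along $G=D\varphi(x)$ over $\omega$ and to dispose of the $A(F)\cdot M(D\varphi)$ contribution by using that minors of a gradient field are null Lagrangians. Fix a bounded Lipschitz set $\omega\subseteq\R^n$, a constant matrix $F\in\Rnn$, and $\varphi\in C^\infty_c(\omega;\R^n)$. Evaluating (\ref{eqpolymonot}) pointwise at the given $F$ and $G=D\varphi(x)$ and integrating over $\omega$ yields
\begin{equation*}
\int_\omega \bigl(T(F+D\varphi)-T(F)\bigr)\cdot D\varphi\,dx
\;\ge\; A(F)\cdot\int_\omega M(D\varphi)\,dx + \int_\omega B(F,D\varphi)\,dx,
\end{equation*}
where integrability of every term is automatic: $T$ is Borel measurable and locally bounded while $D\varphi$ is bounded with compact support in $\omega$, the minors are polynomial in $D\varphi$, and $B(F,\cdot)$ is continuous and nonnegative.

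The key step is the null-Lagrangian identity $\int_\omega M(D\varphi)\,dx = 0$ for every $\varphi\in C^\infty_c(\omega;\R^n)$. For the linear block of $M$, i.e.\ the entries of $D\varphi$, this is simply the divergence theorem combined with the compact support of $\varphi$. For each higher-order minor, namely $\det(D\varphi)$ when $n=2$ and both the cofactor entries $\cof(D\varphi)_{ij}$ and $\det(D\varphi)$ when $n=3$, one uses the classical fact that every $k$-th order minor of a gradient field can be written as a polynomial divergence in $\varphi$ and $D\varphi$. The cleanest instance is the Piola identity $\sum_i \partial_i(\cof D\varphi)_{ij}=0$, which yields $n\det(D\varphi)=\partial_i\bigl(\varphi_j\,(\cof D\varphi)_{ij}\bigr)$; the cofactor entries in three dimensions admit an analogous first-order divergence representation since they are themselves $2{\times}2$ minors of a gradient. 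Integration by parts together with the vanishing of $\varphi$ near $\partial\omega$ then gives zero for each such minor, so $\int_\omega M(D\varphi)\,dx=0$.

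Substituting this into the displayed inequality eliminates the $A(F)\cdot M$ term and leaves
\begin{equation*}
\int_\omega \bigl(T(F+D\varphi)-T(F)\bigr)\cdot D\varphi\,dx \;\ge\; \int_\omega B(F,D\varphi)\,dx,
\end{equation*}
which is precisely (\ref{eqdefquasimon}) with the \emph{same} function $B$ furnished by Definition~\ref{defpolymonotone}; its continuity, nonnegativity and vanishing-only-at-zero property transfer unchanged, so $T$ is strictly quasimonotone. The only nontrivial ingredient is the null-Lagrangian identity for minors, which is standard but requires some care to spell out in general dimension; the rest is a one-line integration of the pointwise inequality.
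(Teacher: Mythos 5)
Your proposal is correct and follows exactly the paper's argument: integrate the pointwise polymonotonicity inequality with $G=D\varphi$ and use the null-Lagrangian identity $\int_\omega M(D\varphi)\,dx=0$ to discard the $A(F)$ term, keeping the same $B$. The only difference is that you spell out the (standard) divergence-form justification of the null-Lagrangian identity, which the paper simply asserts.
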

\begin{proof}
Let $\omega$, $\varphi$, $F$ be as in Definition \ref{defquasimonotone}. We compute, using (\ref{eqpolymonot}),
\begin{equation}
\begin{split}
    \int_\omega (T(F+D\varphi)-T(F))\cdot D\varphi \, dx \ge &
    A(F)\cdot \int_\omega M(D\varphi) dx + \int_\omega B(F,D\varphi) dx.
\end{split}
\end{equation}
The assertion then follows from $\int_\omega M(D\varphi)dx=0$.
\end{proof}

In order to pass from quasimonotonicity to closedness, we need, much as in the case of quasiconvexity, appropriate growth and continuity assumptions (see for example \cite[Th.~4.4 and 4.5]{MuellerLectureNotes} or \cite[Sect.~8.2.2]{DacorognaBuch2008}). {In} the remainder of the paper, we restrict attention to the case of $F\in L^p(\Omega;\Rnn)$ and $P\in L^q(\Omega;\Rnn)$, with $P=T(F)$ pointwise. We observe that $|T(F)|^q\sim |F|^p$ implies $|T(F)|\sim |F|^{p/q}=|F|^{p-1}$ (since $q=p/(p-1)$) and, therefore, $|DT(F)|\sim |F|^{p-2}$. These scalings arise also naturally from $W(F)\sim |F|^p$ and $T=DW$, $DT=D^2W$.

\begin{theorem}\label{theostrictlyqmclosed2}
Assume that
$p\ge n\ge 2$, $T:\Rnn\to\Rnn$. Assume one of the following:
\begin{enumerate}
 \item[(a)] $T$ is continuous and strictly polymonotone;
 \item[(b)]
$T$ is strictly quasimonotone and
\begin{equation}\label{eqtfgtf}
 |T(F+G)-T(F)|\le c (|F|^{p-2}+|G|^{p-2}+1) |G|
\end{equation}
for some $c>0$ and all $F,G\in\Rnn$.
\end{enumerate}
Then,
\begin{enumerate}
 \item $\DlocT$, {as defined in (\ref{eqdefDlocT}),} is locally $(p,q)$-\divcurl-closed;
  \item Assume $(F_k, P_k)\in \Xpq(\omega)$ is \divcurl-convergent to $(F,P)\in\Xpq(\omega)$ for some bounded Lipschitz nonempty $\omega\subseteq\R^n$, with $P_k=T(F_k)$ pointwise almost everywhere. Then, $(F_k,P_k)$ converges strongly in $L^1(\omega;\Rnn\times \Rnn)$ to $(F,P)$ and $P=T(F)$ pointwise almost everywhere.
\end{enumerate}
\end{theorem}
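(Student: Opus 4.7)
Both conclusions reduce to showing that if $(F_k,P_k)\in X_{p,q}(\omega)$ is \divcurl-convergent to $(F,P)$ with $P_k=T(F_k)$ a.e., then $F_k\to F$ in measure on $\omega$. Once this is in hand, the uniform $L^p$- and $L^q$-bounds give uniform integrability, so Vitali's theorem upgrades convergence in measure to strong $L^1$ convergence. The identification $P=T(F)$ a.e.\ then follows along an a.e.\ convergent subsequence by passing to the pointwise limit in $P_k=T(F_k)$, using continuity of $T$ (explicit in (a), implied by (\ref{eqtfgtf}) in (b)). Assertion (1) is the special case of (2) in which the \divcurl-limit is constant.

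\textbf{Case (a).} Strict polymonotonicity yields the pointwise inequality
\begin{equation*}
(T(F_k)-T(F))\cdot (F_k-F) - A(F)\cdot M(F_k-F) \ge B(F,F_k-F)\ge 0.
\end{equation*}
Fix a nonnegative $\varphi\in C^\infty_c(\omega)$ and a cutoff $\chi_R:=\mathbf{1}_{\{|F|\le R\}}$ so that $\chi_R T(F)$ and $\chi_R A(F)$ lie in $L^\infty$. Multiply the inequality by $\chi_R\varphi$ and integrate. Expanding the left-hand side as
\begin{equation*}
\int_\omega \chi_R\varphi\,(P_k\cdot F_k - T(F)\cdot F_k - P_k\cdot F + T(F)\cdot F)\,dx,
\end{equation*}
the first term converges to $\int\chi_R\varphi\,P\cdot F\,dx$ by Lemma~\ref{lemmadivcurl} (which in fact gives $F_k\cdot P_k\rightharpoonup F\cdot P$ weakly in $L^1$), and the cross terms converge to the same limit by the weak convergences of $F_k$ and $P_k$ paired with the $L^\infty$ weights; the whole left side thus tends to zero. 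The minors term $\int\chi_R\varphi\,A(F)\cdot M(F_k-F)\,dx$ also vanishes, by Remark~\ref{remarkdivcurlminors}, which uses $p\ge n$. Hence $\int\chi_R\varphi\,B(F,F_k-F)\,dx\to 0$; letting $R\to\infty$ and exhausting $\omega$ yields $B(F,F_k-F)\to 0$ in measure, and the zero-characterization of $B$ delivers $F_k\to F$ in measure.

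\textbf{Case (b) and main obstacle.} Quasimonotonicity (\ref{eqdefquasimon}) admits only constant backgrounds, so we localize as in Proposition~\ref{propdeltaclosed}: blowing up at a Lebesgue point $x_*$ of $F$ produces a \divcurl-convergent sequence $(\hat F_j,\hat P_j)\rightharpoonup (F_*,P_*)$ on $B_1$ with $\hat P_j=T(\hat F_j)$ and constant limit $(F_*,P_*)=(F(x_*),P(x_*))$. The central step, and the main technical obstacle, is the projection of $\hat F_j-F_*$ onto gradients: since $\curl(\hat F_j-F_*)\to 0$ strongly in $W^{-1,p}(B_1)$, a Helmholtz- (or Bogovskii-)type decomposition on a slightly smaller ball writes $\hat F_j-F_*=D\varphi_j+R_j$ with $\varphi_j\in W^{1,p}_0$ uniformly bounded and $R_j\to 0$ strongly in $L^p$. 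Quasimonotonicity then yields
\begin{equation*}
\int B(F_*,D\varphi_j)\,dx \le \int (T(F_*+D\varphi_j)-T(F_*))\cdot D\varphi_j\,dx,
\end{equation*}
and via the growth bound (\ref{eqtfgtf}) together with $R_j\to 0$ in $L^p$ the right-hand side is replaced, up to $o(1)$, by $\int (T(\hat F_j)-T(F_*))\cdot (\hat F_j-F_*)\,dx$. The \divcurl-lemma computation of case (a) shows that this integral tends to zero, so $B(F_*,D\varphi_j)\to 0$, hence $D\varphi_j\to 0$ in measure and $\hat F_j\to F_*$ in measure. The pointwise relation $P_*=T(F_*)$, and thereby $P=T(F)$ a.e., then follow exactly as in the first paragraph.
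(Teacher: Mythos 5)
Your overall strategy (monotonicity up to null Lagrangians tested via the div-curl lemma, convergence in measure, then Vitali) is the paper's, but there are two genuine gaps, both at the points where the paper localizes and you do not.

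The first is in case (a), where you keep the non-constant limit $F$ and test the polymonotonicity inequality against $\chi_R\varphi$ with $\chi_R=\mathbf{1}_{\{|F|\le R\}}$. Lemma~\ref{lemmadivcurl} and Remark~\ref{remarkdivcurlminors} give convergence of $F_k\cdot P_k$ and of $M(F_k-F)$ only \emph{in the sense of distributions}; since these sequences are merely bounded in $L^1$, this upgrades at best to weak-$*$ convergence of measures, which licenses testing against continuous functions but not against the discontinuous bounded weight $\chi_R\varphi$ (nor against $\chi_R\varphi\,A(F)$, and $A$ is not even assumed measurable). Your parenthetical claim that the div-curl lemma "in fact gives $F_k\cdot P_k\rightharpoonup F\cdot P$ weakly in $L^1$" is false in general: weak $L^1$ convergence requires equiintegrability of the products (Dunford--Pettis), and the positive and negative parts of $F_k\cdot P_k$ can concentrate separately even though their difference converges distributionally; ruling out such concentration is essentially what the theorem is trying to prove, so the argument is circular at this step. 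The paper avoids the issue entirely by first performing the blow-up of Proposition~\ref{propdeltaclosed} to reduce to a \emph{constant} limit $(F_*,P_*)$, so that $T(F_*)$ and $A(F_*)$ are constants and every pairing is against a $C^\infty_c$ test function. Your case (a) needs the same reduction.

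The second gap is the converse passage: in case (b) you prove only the localized statement (the blow-ups $\hat F_j$ converge in measure to $F(x_*)$ for a.e.\ $x_*$) and then assert that assertion (2) "follows exactly as in the first paragraph." It does not: the blow-up sequence involves a diagonal choice of indices $\hat k_j$ depending on $x_*$, so strong convergence of each blow-up does not by itself yield convergence in measure of the original sequence $F_k$ on $\omega$. The paper bridges this by extracting weak limits $h_F, h_P$ of $|F_k-F|$, $|P_k-P|$, arranging the blow-up so that additionally $|\hat F_j-F_*|\weakto h_F(x_*)$ in $L^p(B_1)$, and deducing $h_F=h_P=0$ a.e.\ from the strong $L^1$ convergence of the blow-ups; some such device is required in your write-up as well. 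The remaining technical points you compress in case (b) --- choosing radii $r$ not charged by the relevant weak-$*$ limit measures so that the div-curl product can be integrated over $B_r$ rather than against a smooth test function, and the cutoff producing $\varphi_k\in C^\infty_c$ from the Hodge potential --- are handled correctly in spirit and match the paper.
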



We remark that  (\ref{eqtfgtf}) implies that $T$ is continuous, and, therefore, Borel measurable and locally bounded.
\begin{proof}
\newcommand\omegaba{{B_1}}
\newcommand\omegar{{B_r}}
{
We first prove that in both cases the following local version of (ii):
\begin{enumerate}\addtocounter{enumi}{2}
 \item Assume $(F_k, P_k)\in \Xpq(\omegaba)$ is \divcurl-convergent to the constant function $(F_*,P_*)\in\Rnnnn$, with $P_k=T(F_k)$ pointwise almost everywhere. Then, $(F_k,P_k)$ converges strongly in $L^1(\omegaba;\Rnn\times \Rnn)$ to $(F_*,P_*)$ and $P_*=T(F_*)$.
 \end{enumerate}
}
Let $(F_k,P_k)\in X_{p,q}(\omegaba)$ be a sequence which \divcurl-converges to  {$(F_*,P_*)$} in  $\Xpq(\omegaba)$ and such that $P_k=T(F_k)$ almost everywhere, for any $k$.
By the div-curl Lemma \ref{lemmadivcurl}, we obtain $P_k\cdot F_k\weakto P\cdot F_*$ in the sense of distributions, and, therefore, $P_k\cdot(F_k-F_*)\weakto0$. Weak convergence additionally gives $T(F_*)\cdot (F_k-F_*)\weakto0$ and, therefore,
\begin{equation}\label{eqpkdw2fkfa}
    (P_k - T(F_*))  \cdot (F_k - F_*) \weakto 0 ,
\end{equation}
in the sense of distributions. 

In case (a), $T$ is strictly polymonotone. Let $A$ and $B$ be as in (\ref{eqpolymonot})  in the definition of strict polymonotonicity. From (\ref{eqpolymonot}) and Remark~\ref{remarkdivcurlminors} we obtain, for any test function $\theta\in C^\infty_c(\omegaba;[0,\infty))$,
\begin{equation}\label{eqbfkfsd}
\begin{split}
 0=&\limsup_{k\to\infty} \int_\omegaba   (T(F_k)- T({F_*}))  \cdot (F_k-{F_*}) \, \theta\, dx\\
 \ge& \limsup_{k\to\infty} \left[A({F_*})\cdot \int_\omegaba M(F_k-{F_*}) \, \theta\,dx + \int_\omegaba B({F_*},F_k-{F_*})\,\theta\, dx\right] \\
 =& \limsup_{k\to\infty} \int_\omegaba B({F_*},F_k-{F_*})\,\theta\, dx .
\end{split}
\end{equation}
{Together with} $B\ge0$, {this implies that} the sequence of functions $x\mapsto B({F_*},F_k(x)-{F_*})$ converges to zero in $L_\loc^1(\omegaba)$ and, therefore, (up to a subsequence) pointwise almost everywhere.  Since $B(F_*,\cdot)$ is continuous and $B(F_*,G)>0$ for $G\ne0$, we obtain that (up to a further subsequence), for almost every $x\in\omegaba$, either $F_k(x)\to {F_*}$ or $|F_k|(x)\to\infty$ holds. Since $F_k$ is bounded in $L^p(\omegaba;\Rnn)$, we obtain
\begin{equation}\label{eqfkfpwae}
    F_k\to {F_*}, \text{ pointwise a.~e.~along a subsequence. }
\end{equation}
{
By continuity of $T$, $P_k=T(F_k)\to T(F_*)$ along the same subsequence.
Since $(F_k,P_k)$ is bounded in $\Xpq(\omegaba)$, it is equiintegrable and $(F_k,P_k)\to (F_*,T(F_*))$ in $L^1(\omegaba;\Rnn)$ along the same subsequence.
By uniqueness of the limit the same holds without extracting a subsequence and $P_*=T(F_*)$. This concludes the proof of (iii) in case (a).}


In case (b), the proof is more complex. We define $G_k:=F_k-{F_*}$. Then, $G_k\weakto 0$ in $L^p(\omegaba;\Rnn)$ and $\curl G_k\to0$ in $W^{-1,p}(\omegaba;\Rnnn)$, therefore (by the Hodge decomposition) there is $\psi_k\in W^{1,p}(\omegaba;\R^n)$ such that $\psi_k\weakto0$ in $W^{1,p}(\omegaba;\R^n)$ and $G_k-D\psi_k\to0$ in $L^p(\omegaba;\Rnn)$. 

{
The sequence 
 $(P_k - T({F_*}))  \cdot (F_k - {F_*})$ is bounded in $L^1(\omegaba)$, and by (\ref{eqpkdw2fkfa}) it converges to zero weakly-$*$ in measures. For the same reason,
 passing to a subsequence $|(P_k - T({F_*}))  \cdot (F_k - {F_*})|$ 
 has a weak-$*$ limit in measures. As usual, this implies that along this subsequence
\begin{equation}\label{eqpkdw2fkfab}
\int_{\omegar}    (P_k - T({F_*}))  \cdot (F_k - {F_*})\, dx\to 0
\quad
\text{ for a.e. $r\in(0,1)$}.
\end{equation}}
{Similarly, since also $|D\psi_k|^p$ is bounded in 
 $L^1(\omegaba)$, it  has (up to a further subsequence) a weak-$*$ limit in measures, and 
\begin{equation}\label{eqpkdw2fkfabpsi}
\limsup_{\delta\to0} \limsup_{k\to\infty}\int_{\omegar \setminus B_{r-\delta}}|D\psi_k|^p    dx=0
\quad
\text{ for a.e. $r\in(0,1)$}.
\end{equation}
For the next part of the proof we fix $r\in(0,1)$ such that 
(\ref{eqpkdw2fkfab}) and (\ref{eqpkdw2fkfabpsi}) hold.}
{
We first show that 
we can choose $\varphi_k$ such that 
\begin{equation}\label{eqvarphik}
\varphi_k\in C^\infty_c({\omegar};\R^n) \text{ and  } \varphi_k-\psi_k\to0 \text{ in } W^{1,p}({\omegar};\R^n).
\end{equation}
To do this we define $\ell_k:=\|\psi_k\|_{L^p(\omegar)}^{1/2}\to0$
and $\hat\psi_k(x):=\psi_k(x) \min\{ \ell_k^{-1} (r-|x|), 1\}$. Then $\hat\psi_k\in W^{1,p}_0(\omegar;\R^n)$ 
and from (\ref{eqpkdw2fkfabpsi}) we obtain
$\hat\psi_k-\psi_k\to0$ strongly in $W^{1,p}(\omegar;\R^n)$.
By density, there is $\varphi_k$ as stated in (\ref{eqvarphik}).} 

We define $R_k:=G_k-D\varphi_k$, so that $F_k={F_*}+D\varphi_k+R_k$. By the construction of $\varphi_k$, we have $R_k\to0$ in $L^p({\omegar};\Rnn)$. 
Recalling that the sequence $P_k-T({F_*})$ is bounded in $L^q({\omegar};\Rnn)$,
{and inserting $F_k-{F_*}=D\varphi_k+R_k$, } Equation (\ref{eqpkdw2fkfab}) {gives}
\begin{equation}
   {\int_{\omegar}} (T({F_*}+D\varphi_k+R_k )- T({F_*}))  \cdot D\varphi_k \,dx\to 0 .
\end{equation}
By (\ref{eqtfgtf}), we obtain
\begin{equation}
    |T({F_*}+D\varphi_k +R_k)- T({F_*}+D\varphi_k)|
    \le c(|{F_*}|^{p-2}+|D\varphi_k|^{p-2}+|R_k|^{p-2}+1)|R_k|    .
\end{equation}
Since  $D\varphi_k$ {is} bounded in $L^p({{\omegar}};\Rnn)$ and $R_k\to0$ in $L^p({{\omegar}};\Rnn)$, with $p\ge  2$, we conclude that
\begin{equation}\label{eqpkdw2fkfb}
    {\int_{\omegar}} (T({F_*}+D\varphi_k)- T({F_*}))  \cdot D\varphi_k \,dx\to 0 ,
\end{equation}
With (\ref{eqdefquasimon}) we obtain
\begin{equation}
\begin{split}
 0=&\limsup_{k\to\infty} \int_{{\omegar}}   (T({F_*}+D\varphi_k)- T({F_*}))  \cdot D\varphi_k \, dx\\
 \ge& \limsup_{k\to\infty} \int_{{\omegar}} B({F_*},D\varphi_k)\, dx .
\end{split}
\end{equation}
{Together with $B\ge 0$} this implies that  the sequence of functions $x\mapsto B({F_*},D\varphi_k(x))$ converges to zero in {$L^1({{\omegar}})$ 
for almost every $r>0$}
and, therefore, in $L^1_\loc(\omega)$. {The proof of (iii) is then concluded as in the previous case, starting right after (\ref{eqbfkfsd}) and recalling that $R_k=F_k-F_*-D\varphi_k$ converges strongly to 0.}

Assertion (iii) and Lemma \ref{lemmalocdivcclsetun} immediately imply (i).

{It remains to prove that (iii) implies (ii).
Assume $(F_k, P_k)\in \Xpq(\omega)$ is \divcurl-convergent to $(F,P)\in\Xpq(\omega)$ with $P_k=T(F_k)$ pointwise almost everywhere. 
Since $(F_k,P_k)$ is bounded in $\Xpq(\omega)$ we can pass to a subsequence 
with $|F_k-F|\weakto h_F$ weakly in  $L^p(\omega)$ and
$|P_k-P|\weakto h_P$ weakly in  $L^q(\omega)$.
We shall show below that
\begin{equation}\label{eqhfhp0}
 h_F=h_P=0 \text{ a.e. in $\omega$}.
\end{equation}
This implies
$\int_\omega|F_k-F|+|P_k-P|dx\to0$. Therefore $(F_k,P_k)\to (F,P)$ strongly in $L^1(\omega;\Rnn\times\Rnn)$ along the chosen subsequence and, by uniqueness of the limit, for the entire sequence, which concludes the proof.
}

{In order to prove (\ref{eqhfhp0}),
we localize using 
the same procedure as in Proposition \ref{propdeltaclosed}, starting from (\ref{eqcurlfkfggk}), with slight modifications to treat also the function $(h_F,h_P)$. In particular, before (\ref{eqdmudcall}) we require $x_*$ to be a Lebesgue point of $(h_F,h_P)$ as well.
In choosing  $\hat k_j$ we require, additionally to (\ref{eqweakconvmed}), also
\begin{equation}\label{eqweakconvmedb}
\begin{split}
    \|R_{r_j}|F_{k}-F|-R_{r_j}h_F\|_{w,p}+
    \|R_{r_j}|P_{k}-P|-R_{r_j}h_P\|_{w,q}&\le
    \frac1j \text{ for all } k\ge \hat k_j.
\end{split}
\end{equation}}
{
Then necessarily
$R_{r_j}(h_F,h_P)\to (h_F,h_P)(x_*)$ and, by the same argument leading to (\ref{eqhatfjpj}), 
\begin{equation}\label{eqfjfh}
\begin{split}
 |\hat F_j-F_*|\weakto h_F(x_*) \text { weakly in } L^p(B_1),\\
 |\hat P_j-P_*|\weakto h_P(x_*) \text { weakly in } L^q(B_1). 
\end{split}
\end{equation}
The rest of the argument, up to  (\ref{propdeltaclosedfin}), is unchanged. 
We conclude that
for almost every $x_*\in\omega$ 
there is a sequence $(\hat F_j,\hat P_j)\in\Xpq(B_1)$
which \divcurl-converges 
to a constant function $(F_*, P_*)$ and which obeys
(\ref{eqfjfh}).}
{By (iii) we obtain $(\hat F_j,\hat P_h)\to (F_*,P_*)$ strongly in $L^1(B_1;\Rnn)$. With (\ref{eqfjfh}) this implies (\ref{eqhfhp0}) and concludes the proof.}
\end{proof}

\subsection{Examples of \divcurl-closed sets in two dimensions}\label{Jlt9at}

We show that the local data sets discussed in Lemma \ref{le:coercive} and Example \ref{example2dcoerciv} are, with appropriate restrictions on the coefficients, \divcurl-closed.

\begin{proposition}\label{propexdeltaclosed}
Let $n=2$. Assume that $g\in C^1(\R)$ is convex, with
\begin{equation}\label{eqgrowthglin}
    |g'(t)|\le {\ceins}+{\czwei}|t|\hskip1cm\text{ for all $t\in\R$}
\end{equation}
for some $b,d\ge0$, and for some $a>0$ set
\begin{equation}
    W_2(\xi):=\frac12 |\xi|^2+\frac14 a|\xi|^4+g(\det \xi).
\end{equation}
If  $b\le 2$ and $d\le 3a$, then $DW_2$ is strictly quasimonotone and the data-set  $\Dloca{DW_2}$ is  locally $(4,\frac43)$-\divcurl-closed.
\end{proposition}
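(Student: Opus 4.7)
The plan is to invoke Theorem~\ref{theostrictlyqmclosed2}(a) with $n=2$ and $p=4$. Continuity of $T:=DW_2$ is immediate from $g\in C^1$, so the task reduces to verifying that $T$ is strictly polymonotone in the sense of Definition~\ref{defpolymonotone}; strict quasimonotonicity then follows from Lemma~\ref{lemmapolymonquasimon}, and the asserted local $(4,\tfrac{4}{3})$-\divcurl-closedness of $\Dloca{DW_2}$ from Theorem~\ref{theostrictlyqmclosed2}(a).

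Compute $T(\xi)=\xi+a|\xi|^2\xi+g'(\det\xi)\cof\xi$. Using the planar identities $\cof(F+G)=\cof F+\cof G$, $\cof G\cdot G=2\det G$, and $\det(F+G)=\det F+\cof F\cdot G+\det G$, and writing $\alpha=\det F$, $\beta=\det(F+G)$, $\gamma=\det G$, a direct expansion gives the identity
\begin{equation*}
(T(F+G)-T(F))\cdot G=Q(F,G)+[g'(\beta)-g'(\alpha)](\beta-\alpha)+[g'(\alpha)+g'(\beta)]\gamma ,
\end{equation*}
where
\begin{equation*}
Q(F,G):=|G|^2+a|F|^2|G|^2+2a(F\cdot G)^2+3a(F\cdot G)|G|^2+a|G|^4
\end{equation*}
is the Bregman remainder associated with the strictly convex part $\phi(\xi):=\tfrac12|\xi|^2+\tfrac14 a|\xi|^4$. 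Writing $Q$ as a polynomial in $F\cdot G$ and completing the square (case analysis in the two regimes $|G|\le\tfrac{4}{3}|F|$ and $|G|>\tfrac{4}{3}|F|$) shows $Q(F,G)\ge|G|^2$, with strict inequality whenever $G\ne 0$ by strict convexity of $\phi$.

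Now set $A(F):=(0,\,2g'(\alpha))\in\R^{2\times 2}\times\R$, so that $A(F)\cdot M(G)=2g'(\det F)\det G$, and
\begin{equation*}
B(F,G):=Q(F,G)+[g'(\beta)-g'(\alpha)](\beta-\alpha+\gamma) .
\end{equation*}
Then $B$ is continuous in $(F,G)$, and the convexity of $g$ makes $[g'(\beta)-g'(\alpha)](\beta-\alpha)\ge 0$, so the only potentially negative piece of $B$ is $[g'(\beta)-g'(\alpha)]\gamma$. Using $|g'(t)|\le b+d|t|$ together with $|\det H|\le|H|^2/2$ and $|\beta|\le|F|^2+|G|^2$ yields
\begin{equation*}
\bigl|[g'(\beta)-g'(\alpha)]\gamma\bigr|\le b|G|^2+\tfrac{3d}{4}|F|^2|G|^2+\tfrac{d}{2}|G|^4 .
\end{equation*}
Exploiting in addition the convexity bonus $[g'(\beta)-g'(\alpha)](\beta-\alpha)\ge 0$, which cancels the bulk of the negative contribution in the subcase where $\beta-\alpha$ and $\gamma$ have opposite signs, and the AM--GM identity $a|F|^2|G|^2+a|G|^4\ge 2a|F||G|^3$ together with the cross-term $2a(F\cdot G)^2+3a(F\cdot G)|G|^2$ of $Q$ to handle the residue coming from $F\cdot G<0$, the hypotheses $b\le 2$ and $d\le 3a$ allow one to match each of the three terms on the right of the bound above with the $|G|^2$, $a|F|^2|G|^2$ and $a|G|^4$ contributions of $Q$. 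This gives $B(F,G)\ge 0$, with strict inequality for $G\ne 0$ inherited from $Q>|G|^2$, completing the verification of Definition~\ref{defpolymonotone}.

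The main obstacle is the sharp balancing of constants in this last step: one has to simultaneously exploit the diagonal terms of $Q$, the cross terms $2a(F\cdot G)^2+3a(F\cdot G)|G|^2$, and the convexity bonus $[g'(\beta)-g'(\alpha)](\beta-\alpha)$, so that the worst-case fluctuation of $g'$ across $[\alpha,\beta]$ permitted by $|g'|\le b+d|\cdot|$ is absorbed into $Q$ without destroying the strict-convex margin from $\phi$; the specific numerology $b\le 2$, $d\le 3a$ is exactly what is required to make this absorption succeed with room to spare.
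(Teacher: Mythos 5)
Your overall strategy (polymonotonicity $\Rightarrow$ Lemma~\ref{lemmapolymonquasimon} and Theorem~\ref{theostrictlyqmclosed2}(a)) is the right one, and your algebraic identity for $(T(F+G)-T(F))\cdot G$ is correct. The fatal step is the choice of null Lagrangian $A(F)\cdot M(G)=2g'(\det F)\det G$, which leaves $B(F,G)=Q(F,G)+s(\beta-\alpha)+s\gamma$ with $s:=g'(\beta)-g'(\alpha)$ to be shown nonnegative. Since $|g'|\le b+d|\cdot|$, the term $s\gamma$ can be as negative as $-2b\cdot\tfrac12|G|^2=-b|G|^2$ (plus higher order), while $Q$ only supplies a margin of $|G|^2$ above its own higher-order terms and the convexity bonus $s(\beta-\alpha)\ge0$ can be made negligible. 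Concretely, take $d=0$, $b=2$, $g(t)=2\sqrt{t^2+\delta^2}$, $G=\mu\,\Id$, and $F=\mathrm{diag}(-\mu-\eta,-\eta')$ with $0<\delta\ll\eta\mu,\eta'\mu\ll\mu^2\ll 1/a$. Then $\alpha>0>\beta$ with $|\beta-\alpha|=O(\eta\mu+\eta'\mu)$ tiny, so $s\approx-4$ and $s(\beta-\alpha)\to0$, while $\gamma=\mu^2$ gives $s\gamma\approx-4\mu^2=-2|G|^2$; meanwhile $F\cdot G\approx-\mu^2$ yields
\begin{equation*}
Q(F,G)\approx 2\mu^2+a\bigl[2\mu^4+2\mu^4-6\mu^4+4\mu^4\bigr]=2\mu^2+2a\mu^4,
\end{equation*}
so that $B(F,G)\approx-2\mu^2+2a\mu^4<0$. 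The same mechanism defeats your term-matching for any $b>1$, so this is not a borderline failure. The paper avoids it by a different split: monotonicity of $g'$ is used to replace $g'(\beta)(\beta-\alpha)$ by $g'(\alpha)(\beta-\alpha)$, the null Lagrangian is kept as $g'(\det F)\det G$ (coefficient one, not two), and the only term to absorb is $g'(\beta)\det G\ge-\tfrac b2|G|^2-\tfrac d4|F+G|^2|G|^2$, which is matched against the full strength of the quartic Bregman remainder $\tfrac{3a}4|F+G|^2|G|^2+\tfrac a4(|G|^2+3F\cdot G)^2$ (note $|F+G|^2$, not $|F|^2$) — this is exactly where $b\le2$ and $d\le3a$ come from.

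A second, independent gap: even with the correct decomposition, strict polymonotonicity in the sense of Definition~\ref{defpolymonotone} only follows for $b<2$, since at $b=2$ the surviving remainder is $\tfrac a4(|G|^2+3F\cdot G)^2$, which vanishes on a nontrivial set of $G\ne0$; the paper explicitly notes that $b=2$ does not fit the definition and supplies a separate ad hoc compensated-compactness argument for closedness at that endpoint (using $\det(F_k-F)\weakto0$ to upgrade $|F_k|^2+F\cdot F_k-2|F|^2\to0$ to strong $L^2_{\loc}$ convergence). Your proposal, which routes everything through Theorem~\ref{theostrictlyqmclosed2}(a), cannot cover $b=2$ without such a supplement.
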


Before proving this result we give an explicit example.

\begin{example}\label{example2dclosed1}
For all $a\in(0,1/4]$ and $\beta\in(0,2a)$, the function
(\ref{eqdefhatw2}) is invariant under rotations, $(4,4/3)$-coercive, minimized by matrices in $\SO(2)$, and generates a $(4,4/3)$-\divcurl-closed data set.
\end{example}

\begin{proof}
Most properties have already been verified in Example \ref{example2dcoerciv}. To check the last one, we note that $g'(t)=\beta t -(\beta+1+2a)$. Since $\beta<2a$ ,the growth condition (\ref{eqgrowthglin}) holds with $d:=\beta<  2a <  3a$ and $b:=\beta+1+2a<1+4a\le 2$. The assertion follows then from Proposition~\ref{propexdeltaclosed}.
\end{proof}

In order to prove Proposition~\ref{propexdeltaclosed}, we show that $DW_2$ is monotone up to null Lagrangians. More precisely we have the following result.

\begin{lemma} \label{le:polymonotone_2d}
Under the assumptions of Proposition~\ref{propexdeltaclosed},
\begin{equation}  \label{eq:polymonotone_2d}
\begin{split}
    &
    (DW_2(F+G) - DW_2(F)) \cdot G
    \\ &
 \hskip10mm   \ge \frac14 a (|G|^2+3F\cdot G)^2+ g'(\det F) \det G
    +(1-\frac b2)|G|^2
\end{split}
\end{equation}
for all $F, G \in \R^{2 \times 2}$.
\end{lemma}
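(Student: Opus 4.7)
The plan is to expand $(DW_2(F+G)-DW_2(F))\cdot G$ directly and reorganize the resulting terms so that the polyconvex term $\tfrac14 a(|G|^2+3F\cdot G)^2$ and the target $g'(\det F)\det G$ appear on the right, with a manifestly non-negative remainder. Since
\[
 DW_2(\xi)=\xi+a|\xi|^2\xi+g'(\det\xi)\cof\xi,
\]
the left side splits naturally into a quadratic, a quartic and a determinant contribution.

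The quadratic piece contributes $|G|^2$. A direct expansion of the quartic piece, using $|F+G|^2=|F|^2+2F\cdot G+|G|^2$, yields $a\bigl[|F|^2|G|^2+2(F\cdot G)^2+3(F\cdot G)|G|^2+|G|^4\bigr]$. The crucial algebraic step, which I expect to be the main obstacle, is to recognize this quantity as
\[
 \tfrac14 a(|G|^2+3F\cdot G)^2+\tfrac34 a|G|^2|F+G|^2+\tfrac14 a\bigl(|F|^2|G|^2-(F\cdot G)^2\bigr),
\]
a sum-of-squares decomposition which I would verify by expanding both sides. The first summand is exactly the polyconvex term on the right of (\ref{eq:polymonotone_2d}), while the last summand is non-negative by Cauchy--Schwarz; the middle summand will play the role of a reservoir to absorb a bad determinant term below.

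For the determinant piece I would exploit the two-dimensional identities $\cof(F+G)=\cof F+\cof G$ (linearity of the cofactor in $n=2$), $\cof G\cdot G=2\det G$, and $\det(F+G)=\det F+\cof F\cdot G+\det G$. Writing $s=\det F$ and $t=\det(F+G)$, these collapse the determinant contribution to $[g'(t)-g'(s)](t-s)+[g'(t)+g'(s)]\det G$. Subtracting the target $g'(s)\det G$ and invoking convexity of $g$ (which gives $[g'(t)-g'(s)](t-s)\ge 0$), the proof reduces to the scalar inequality
\[
 \tfrac{b}{2}|G|^2+\tfrac34 a|G|^2|F+G|^2+g'(t)\det G\ge 0.
\]

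The final step applies the growth assumption $|g'(t)|\le b+d|t|$ together with the two-dimensional AM-GM bounds $|\det G|\le\tfrac12|G|^2$ and $|t|\le\tfrac12|F+G|^2$ to estimate
\[
 g'(t)\det G\ge -\tfrac{b}{2}|G|^2-\tfrac{d}{4}|F+G|^2|G|^2;
\]
the $b$-contributions cancel exactly and the $d$-contribution is absorbed into $\tfrac34 a|G|^2|F+G|^2$ precisely under the hypothesis $d\le 3a$, leaving $\tfrac{3a-d}{4}|G|^2|F+G|^2\ge 0$ and completing the argument. The hypothesis $b\le 2$ is not needed for this inequality itself; it enters only to ensure the resulting $B(F,G)=(1-b/2)|G|^2+\tfrac14 a(|G|^2+3F\cdot G)^2$ is admissible in the sense of Definition~\ref{defpolymonotone}.
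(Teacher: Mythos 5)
Your proof is correct and follows essentially the same route as the paper's: the same splitting of $W_2$ into quadratic, quartic and determinant parts, the same three sub-estimates, and the same absorption of the determinant error into the reservoir $\tfrac34 a|G|^2|F+G|^2$ under $d\le 3a$ after bounding $|\det G|\le\tfrac12|G|^2$ and $|\det(F+G)|\le\tfrac12|F+G|^2$. The only (valid) variations are cosmetic: you verify the quartic bound via an explicit sum-of-squares identity plus Cauchy--Schwarz instead of the paper's normalization $|G|=1$ and decomposition $F=\lambda G+F^\perp$, and you package the determinant contribution as the exact identity $[g'(t)-g'(s)](t-s)+[g'(t)+g'(s)]\det G$ before applying convexity.
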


In particular, the function $\hat W_2$ in Example~\ref{example2dclosed1} satisfies \eqref{eq:polymonotone_2d}. We note that $D \hat W_2$ is not monotone. Indeed, if $D\hat W_2$ were monotone, then $\hat W_2$ would be convex. However, the set $\SO(2)$ of minimizers of $\hat W_2$ is not convex. If, additionally, $b<2$, then $W_2$ is polymonotone in the sense of Definition \ref{defpolymonotone} with $A(F)=g'(\det F)$ and $B(F,G)=(1-\frac b2)|G|^2$. The case $b=2$ does not fit directly into the definition of polymonotonicity. However, as shown below, it still results in closedness.

\begin{proof}[{Proof of Lemma \ref{le:polymonotone_2d}}]
Set
\begin{subequations}
\begin{align}
    &
    W_{\text{quadr}}(F) := \frac12|F|^2,
    \\ &
    W_{\text{quart}}(F)
    :=
    \frac14 |F|^4,  \quad
    W_{\text{det}}(F) := g(\det F),
\end{align}
\end{subequations}
so that $W_2=W_{\text{quadr}}+aW_{\text{quart}}+W_{\text{det}}$. We prove below the following inequalities for all $F, G \in \R^{2 \times 2}$,
\begin{subequations}
\begin{align}
    &
    \big( DW_{\text{quadr}}(F+G) -DW_{\text{quadr}}(F) \big)  \cdot G  = |G|^2,
    \label{eq:polymono_quadratic_2d}
    \\
    \begin{split}
        &
        \big( DW_{\text{quart}}(F+G) -DW_{\text{quart}}(F) \big)  \cdot G
        \\\ &\hskip5mm
        \ge \frac34 |F+G|^2 \, |G|^2+\frac14 (|G|^2+3F\cdot G)^2,
    \end{split}
    \label{eq:polymono_quartic_2d}
    \\
    \begin{split}
        &
        \big( DW_{\text{det}}(F+G) -DW_{\text{det}}(F) \big)  \cdot G
         \\ &\hskip5mm
 \ge       g'(\det (F+G)) \det G  + g'(\det F) \det G.
    \end{split}
    \label{eq:polymono_det_2d}
\end{align}
\end{subequations}
Furthermore, from (\ref{eqgrowthglin}) and $|\det\xi|\le \frac12|\xi|^2$ we obtain
\begin{equation}
\begin{split}
    g'(\det (F+G)) \det G &\ge - (d |\det (F +G)| + b) \,  |\det G|
    \\ &\ge
    - \frac{d}{4} |F+G|^2 \, |G|^2 - \frac{b}{2} |G|^2 .
\end{split}
\end{equation}
The four previous estimates combined give, recalling the assumption $d\le 3a$, the claimed inequality \eqref{eq:polymonotone_2d}.

It remains to prove (\ref{eq:polymono_quadratic_2d}-\ref{eq:polymono_det_2d}). The identity    \eqref{eq:polymono_quadratic_2d} is evident. The estimate \eqref{eq:polymono_quartic_2d} is equivalent to
\begin{equation}  \label{eq:quartic_2d_a}
\begin{split}
    &
    |F+G|^2 (F+G) \cdot G - |F|^2 F \cdot G
     \\ &
   \ge \frac34 |F+G|^2 \,
    |G|^2+\frac14 (|G|^2+3F\cdot G)^2 , \quad \forall F, G \in
    \R^{2 \times 2}.
\end{split}
\end{equation}
The estimate holds trivially  for $G=0$. Replacing $F$ by $F/ |G|$ if needed, we may assume that $|G| =1$. We can write $F = \lambda G + F^\perp$ with $\lambda\in\R$ and  $F^\perp \cdot G = 0$. Thus, the left-hand side of \eqref{eq:quartic_2d_a} becomes
\begin{equation}
    \big( (1+ \lambda)^2 + |F^\perp|^2\big)
    ( \lambda + 1) - (\lambda^2  + |F^\perp|^2) \lambda
    =
    (1+\lambda)^3 -\lambda^3 + |F^\perp|^2.
\end{equation}
Hence, \eqref{eq:quartic_2d_a} is equivalent to
\begin{equation}
\begin{split}
    &
    1 + 3 \lambda + 3 \lambda^2 +|F^\perp|^2
    \ge \\ &
    \frac34   (1+
    \lambda)^2 + \frac34 |F^\perp|^2+\frac14 (1+3\lambda)^2 ,\quad \forall \lambda
    \in \R, \, \, F^\perp \in \R^{2 \times 2}.
\end{split}
\end{equation}
To verify this inequality, it suffices to expand both squares. This concludes the proof of (\ref{eq:polymono_quartic_2d}).

Regarding $W_{\text{det}}$, we note that $ DW_{\text{det}}(F+G)) = g'(\det(F+G)) \cof(F+G)$ and
\begin{equation}
    \cof (F+G) \cdot G = \cof F \cdot G + 2 \det G = \det(F+G) - \det F + \det G.
\end{equation}
 Since $g$ is convex, the derivative $g'$ is monotone, i.~e.,
\begin{equation}\label{eqgpmonot}
    g'(t) (t-s) \ge g'(s) (t-s) ,
\end{equation}
for all $s,t \in \R$. Thus,
\begin{eqnarray*}
    & &DW_{\text{det}}(F+G) \cdot G \\&=&  g'(\det (F+G))  \,  (\det (F+G) - \det F) +  g'(\det (F+G)) \det G \\
    & \ge & g'(\det  F)\,  (\det (F+G) - \det F) +  g'(\det (F+G)) \det G ,
\end{eqnarray*}
and, hence,
\begin{eqnarray*}
    & & DW_{\text{det}}(F+G) \cdot G - DW_{\text{det}}(F) \cdot G\\
    & \ge &  g'(\det F) (\det (F+G) - \det F - \cof F \cdot G) +  g'(\det (F+G)) \det G.
\end{eqnarray*}
Since $\det (F+G) =\det F +\cof F \cdot G + \det G$, this concludes the proof of \eqref{eq:polymono_det_2d}.
\end{proof}

\begin{proof}[Proof of Proposition \ref{propexdeltaclosed}]
We first remark that, in the case $b<2$, taking $A(F):=(0,g'(\det  F))$ and $B(F,G):=(1-\frac b2)|G|^2$ one immediately sees that \eqref{eq:polymonotone_2d} implies strict polymonotonicity. The conclusion follows then from Theorem \ref{theostrictlyqmclosed2}.

We now present an {\sl ad hoc} proof {of closedness,} that works for $b\le 2$ {and is based on Lemma \ref{le:polymonotone_2d}}. The first steps are the same as in the proof of  Theorem \ref{theostrictlyqmclosed2}. Fix $(F,P)\in\Rnn\times \Rnn$, $\omega\subseteq\R^n$ open and bounded and consider a sequence $(F_k,P_k)\in X_{4,4/3}(\omega)$ that \divcurl-converges to the constant function $(F,P)$ and such that $P_k=DW_2(F_k)$ a.~e.~for any $k$. We need to show that $P=DW_2(F)$.

By the div-curl Lemma \ref{lemmadivcurl}, we obtain $P_k\cdot F_k\weakto P\cdot F$ and  $\det F_k\weakto \det F$. in the sense of distributions. Weak convergence additionally gives $DW_2(F)\cdot (F_k-F)\weakto0$ and, therefore,
\begin{equation}\label{eqpkdw2fkf}
    (DW_2(F_k) - DW_2(F))  \cdot (F_k - F) \weakto 0 ,
\end{equation}
in the sense of distributions. With $\det (F_k-F)=\det F_k-F_k\cdot \cof F+\det F$ and $F\cdot \cof F=2\det F$, we obtain
\begin{equation}
    \det (F_k -  F) \weakto 0 \hskip1cm \text{ in $L^2(\omega)$}.
\end{equation}
We fix a test function $\theta \in C_c^0(\omega)$ with $\theta \ge 0$. From Lemma \ref{le:polymonotone_2d}, we obtain
\begin{eqnarray*}
    & & \frac{a}{4} \limsup_{k \to \infty}  \int_{\omega}
    (|F_k-F|^2+3F\cdot (F_k-F)
    )^2  \theta \, dx   \\
    & &
    \le    \limsup_{k \to \infty}  \int_{\omega}
    \Big[ \big( DW_2(F_k)  - DW_2(F) \big)  \cdot
    (F_k - F) - g'(\det F) \det (F_k - F) \Big]\, \theta \, dx\\
    & & = 0 .
\end{eqnarray*}
Since $|F_k-F|^2+3F\cdot (F_k-F)=|F_k|^2+F_k\cdot F-2|F|^2$, it follows that
\begin{eqnarray*}
    & & \frac{a}{4} \limsup_{k \to \infty}  \int_{\omega}
    (|F_k|^2+F\cdot F_k-2|F|^2)^2  \theta \, dx   =0.
\end{eqnarray*}
Thus $|F_k|^2+F\cdot F_k-2|F|^2\to0$ in $L^2_\loc(\omega)$. Since $F_k\weakto F$ weakly, we deduce that $|F_k|^2\to |F|^2$, so that $F_k\to F$ strongly in $L^2_\loc(\omega)$. Hence, a subsequence converges a.~e.~and, therefore, $P_k = DW_2(F_k) \to DW_2(F)$ a.~e.~for that subsequence. Thus $P = DW_2(F)$ as desired.
\end{proof}

\subsection{Examples of \divcurl-closed sets in three dimensions}\label{dRl1EW}

 \begin{proposition}\label{propexdeltaclosedthree}
Let $n=3$. There is $c_*>0$ with the following property. Assume that $g\in C^1(\R)$ is convex  and that there is $d\in\R$ such that
\begin{equation}\label{eqgrowthglin3d}
    |g'(t)-g'(s)|\le d(|t|+|s|) \text{ for all } s,t\in\R .
\end{equation}
With $a,e>0$, set
\begin{equation}
    W_3(\xi):=\frac12 |\xi|^2+\frac14 a|\xi|^4+\frac16 e|\xi|^6+g(\det \xi).
\end{equation}
If $d\le c_*e$, then the data-set $\calD_{\loc,DW_3}$ is locally $(6,\frac65)$-\divcurl-closed.
\end{proposition}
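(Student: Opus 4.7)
The plan is to follow the strategy of Proposition~\ref{propexdeltaclosed}: establish strict polymonotonicity of $T:=DW_3$ in the sense of Definition~\ref{defpolymonotone}, so that local $(6,6/5)$-\divcurl-closedness of $\calD_{\loc,DW_3}$ follows immediately from Theorem~\ref{theostrictlyqmclosed2}(a), which applies since $p=6\ge n=3$. First I compute
\[
DW_3(\xi)=\xi+a|\xi|^2\xi+e|\xi|^4\xi+g'(\det\xi)\cof\xi ,
\]
and split $(DW_3(F+G)-DW_3(F))\cdot G$ into the four corresponding contributions.

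For the quartic and sextic summands, the integral representation
\[
\bigl(|F+G|^{2m}(F+G)-|F|^{2m}F\bigr)\cdot G=\int_0^1\bigl(|F+tG|^{2m}|G|^2+2m|F+tG|^{2m-2}((F+tG)\cdot G)^2\bigr)dt
\]
with $m=1,2$ yields manifestly nonnegative lower bounds. A compactness argument on the set $\{(F,G):|F|^4+|G|^4=1\}$ furnishes an absolute constant $c_1>0$ with $\int_0^1|F+tG|^4\,dt\ge c_1(|F|^4+|G|^4)$, so that the sextic contribution is bounded below by $c_1 e(|F|^4+|G|^4)|G|^2$; the quadratic part yields an unconditional $|G|^2$. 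For the determinant part, I use the 3D identities $\det(F+G)-\det F=\cof F\cdot G+F\cdot\cof G+\det G$ and $\cof(F+G)\cdot G=\cof F\cdot G+2F\cdot\cof G+3\det G$ to regroup
\begin{align*}
&\bigl(g'(\det(F+G))\cof(F+G)-g'(\det F)\cof F\bigr)\cdot G\\
&\quad= g'(\det F)\bigl(2F\cdot\cof G+3\det G\bigr)\\
&\qquad+\bigl(g'(\det(F+G))-g'(\det F)\bigr)\bigl(\det(F+G)-\det F\bigr)\\
&\qquad+\bigl(g'(\det(F+G))-g'(\det F)\bigr)\bigl(F\cdot\cof G+2\det G\bigr).
\end{align*}
The first line has the form $A(F)\cdot M(G)$ with $A(F)=(0,\,2g'(\det F)F,\,3g'(\det F))\in\R^{19}$ and fits the null-Lagrangian slot of Definition~\ref{defpolymonotone}; the second line is nonnegative by convexity of $g$; the third line is the error.

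The main obstacle is to absorb the error into the sextic positive contribution. Using (\ref{eqgrowthglin3d}) together with $|\det A|\le c|A|^3$ and $|\cof A|\le c|A|^2$, the error is bounded in absolute value by $c\,d(|F|^3+|F+G|^3)(|F||G|^2+|G|^3)$; expanding via $|F+G|^3\le c(|F|^3+|G|^3)$ and applying Young's inequality to each of the mixed terms $|F|^3|G|^3$, $|F|^2|G|^4$ and $|F||G|^5$ reduces this to $C\,d(|F|^4+|G|^4)|G|^2$ for some absolute $C>0$. Setting $c_*:=c_1/C$, the assumption $d\le c_*e$ then yields
\[
B(F,G)\ge(c_1 e-C d)(|F|^4+|G|^4)|G|^2+|G|^2,
\]
which is continuous in $(F,G)$, nonnegative, and strictly positive whenever $G\ne 0$ thanks to the quadratic bonus. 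Definition~\ref{defpolymonotone} is thus satisfied with this $A$ and $B$, and Theorem~\ref{theostrictlyqmclosed2}(a) concludes.
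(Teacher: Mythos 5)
Your proof is correct and follows essentially the same route as the paper: both establish strict polymonotonicity of $DW_3$ in the sense of Definition~\ref{defpolymonotone} --- with the sextic term supplying the coercive $B$, the convexity of $g$ handling one piece of the determinant term, and the hypothesis $d\le c_*e$ absorbing the remaining error of size $O\big(d(|F|^4+|G|^4)|G|^2\big)$ --- and then invoke Theorem~\ref{theostrictlyqmclosed2}(a). The only differences are cosmetic: the paper's Lemma~\ref{pr:quasimonotone_3d} regroups the determinant contribution around $g'(0)$ rather than around $g'(\det F)$, and proves the quantitative monotonicity $\big(DW_{\text{six}}(F+G)-DW_{\text{six}}(F)\big)\cdot G\ge c_{\ast\ast}(|F|^4+|G|^4)|G|^2$ by a direct homogeneity-plus-compactness contradiction argument rather than via your integral representation.
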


\begin{lemma}   \label{pr:quasimonotone_3d}
Let $n=3$ and let $W_3$ be as in Proposition \ref{propexdeltaclosedthree}. There are $c_*>0$ and $c' > 0$  with the following property. If $d \le c_{\ast} e$, then
\begin{equation}
\begin{split}
    \big(  DW_3(F+G) &- DW_3(F) )  \cdot G
    \\
   \ge & c' e  |G|^6 +
    g'(\det F) (F \cdot \cof G + \det G)
     \\ &
+    g'(0) (F \cdot \cof G + 2 \det G) .
\end{split}
\end{equation}
\end{lemma}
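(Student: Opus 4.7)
Following the pattern of Lemma~\ref{le:polymonotone_2d}, I would split
$W_3 = W_{\mathrm{quadr}} + aW_{\mathrm{quart}} + eW_{\mathrm{sext}} + W_{\det}$ with
$W_{\mathrm{sext}}(\xi):=\tfrac16|\xi|^6$ and $W_{\det}(\xi):=g(\det\xi)$, and estimate
$(DW_i(F+G)-DW_i(F))\cdot G$ for each part. The quadratic part contributes $|G|^2\ge 0$ and the
quartic estimate \eqref{eq:polymono_quartic_2d} carries over verbatim to $n=3$, since its
derivation only uses $|F|$, $|G|$ and $F\cdot G$. Both contributions are non-negative and do
not interfere with matching the target right-hand side, so the work reduces to the sextic and
the determinant parts.

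For the determinant the two relevant identities are the expansion
$\det(F+G) = \det F + \cof F\cdot G + F\cdot\cof G + \det G$, and, obtained by differentiating
$t\mapsto\det(F+tG) = \det F + t\cof F\cdot G + t^2 F\cdot\cof G + t^3\det G$ and evaluating at
$t=1$, the relation $\cof(F+G)\cdot G = \cof F\cdot G + 2F\cdot\cof G + 3\det G$. Substituting
these into $(DW_{\det}(F+G)-DW_{\det}(F))\cdot G = g'(\det(F+G))\cof(F+G)\cdot G - g'(\det F)\cof F\cdot G$
and regrouping with the shorthand $\alpha:=g'(\det(F+G))$, $\beta:=g'(\det F)$, $\gamma:=g'(0)$ yields
\begin{align*}
&(DW_{\det}(F+G)-DW_{\det}(F))\cdot G - g'(\det F)(F\cdot\cof G+\det G) - g'(0)(F\cdot\cof G+2\det G)\\
&\qquad = (\alpha-\beta)\bigl(\det(F+G)-\det F\bigr) + (\alpha-\gamma)(F\cdot\cof G+2\det G).
\end{align*}
The first summand is $\ge 0$ by monotonicity of $g'$. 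For the second, (\ref{eqgrowthglin3d})
with $s=0$ gives $|\alpha-\gamma|\le d|\det(F+G)|\le cd(|F|^3+|G|^3)$, and combined with
$|F\cdot\cof G|\le c|F||G|^2$, $|\det G|\le c|G|^3$, followed by Young's inequality to remove
the mixed powers $|F|^3|G|^3$ and $|F||G|^5$, one obtains
$|(\alpha-\gamma)(F\cdot\cof G+2\det G)|\le Cd\,(|F|^4+|G|^4)|G|^2$ with a universal constant $C$.

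For the sextic contribution the Hessian satisfies
$D^2 W_{\mathrm{sext}}(\xi)[G,G] = 4|\xi|^2(\xi\cdot G)^2 + |\xi|^4|G|^2 \ge |\xi|^4|G|^2$, so
integrating along the segment from $F$ to $F+G$ gives
$(DW_{\mathrm{sext}}(F+G)-DW_{\mathrm{sext}}(F))\cdot G \ge |G|^2 \int_0^1 |F+tG|^4\,dt$. A case
distinction $|F|\ge 2|G|$ (where $|F+tG|\ge |F|/2$ for all $t\in[0,1]$) versus $|F|\le 2|G|$
(where the classical uniform-convexity inequality $(|y|^{4}y-|x|^{4}x)\cdot(y-x)\ge c|y-x|^6$
with $y=F+G$, $x=F$ yields $\ge c|G|^6$) shows that
$|G|^2\int_0^1 |F+tG|^4\,dt \ge c_0(|F|^4+|G|^4)|G|^2$ for some universal $c_0>0$. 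Setting
$c_*:=c_0/(2C)$ and using the hypothesis $d\le c_* e$, the sextic term absorbs the determinant
error and leaves
\[
(DW_3(F+G)-DW_3(F))\cdot G - g'(\det F)(F\cdot\cof G+\det G) - g'(0)(F\cdot\cof G+2\det G)
\ge \tfrac12 c_0 e\,(|F|^4+|G|^4)|G|^2 \ge c' e|G|^6
\]
with $c':=c_0/2$, which is the asserted inequality. The main obstacle is the clean algebraic
reorganization yielding the two-summand identity above: the specific combination
$g'(\det F)(F\cdot\cof G+\det G)+g'(0)(F\cdot\cof G+2\det G)$ in the statement is engineered
precisely so that the residue collapses to a non-negative convexity term plus a single
$g'$-difference, and only the refined bound $|G|^2\int_0^1|F+tG|^4\,dt \gtrsim (|F|^4+|G|^4)|G|^2$
(not just $\gtrsim |G|^6$) is strong enough to absorb the $|F|^4|G|^2$ component of that error
under the dimensional-free smallness assumption $d\le c_* e$.
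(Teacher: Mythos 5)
Your proposal is correct and follows essentially the same route as the paper: the same splitting of $W_3$, the same determinant identities and monotonicity of $g'$ to isolate the error term $(g'(\det(F+G))-g'(0))(F\cdot\cof G+2\det G)$, the same bound $\le C d\,(|F|^4+|G|^4)|G|^2$ on that error, and its absorption into the sextic term under $d\le c_*e$. The only deviation is that you prove the key coercivity estimate $(DW_{\mathrm{six}}(F+G)-DW_{\mathrm{six}}(F))\cdot G\ge c(|F|^4+|G|^4)|G|^2$ constructively (Hessian bound plus the classical $p$-monotonicity inequality and a case distinction), whereas the paper obtains it by a homogeneity--compactness contradiction argument; your version has the minor advantage of explicit constants but is otherwise equivalent.
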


\begin{proof}
We have
\begin{equation}  \label{eq:expansion_det_3d}
    \det (F+G) - \det F = \cof F \cdot G + F \cdot \cof G + \det G.
\end{equation}
This implies that
\begin{equation}
    \cof (F +G) \cdot G = \cof F \cdot G + 2 F \cdot \cof G + 3 \det G.
\end{equation}
Indeed, it suffices to apply \eqref{eq:expansion_det_3d} with $sG$ instead of $G$ and differentiate with respect to $s$ at $s=1$
{and use Jacobi's formula $\cof=D\det$.} Thus,
\begin{equation}\label{eqcoffg}
    \cof (F+G) \cdot G = \det(F+G) - \det F + F \cdot \cof G + 2 \det G.
\end{equation}
Set
\begin{equation}
    W_{\text{det}}(F)  := g(\det F).
\end{equation}
Then, {using (\ref{eqcoffg}),}
\begin{align}
    DW_{\text{det}}(F+G) \cdot G = &
    g'(\det(F+G)) \cof(F+G)\cdot G\\
    =&
    g'(\det(F+G))  \,  (\det (F+G) - \det F) +
    R_1 ,
\end{align}
where
\begin{align*}
    R_1 :=& g'(\det (F+G)) (F \cdot \cof G + 2 \det G)
    \\ = &
    g'(0) (F \cdot \cof G + 2 \det G) + R_2
\end{align*}
and $R_2:=(g'(\det (F+G))-g'(0))  (F \cdot \cof G + 2 \det G)$. With $|\cof G|\le |G|^2/\sqrt3$ and $|\det G|\le |G|^3/3^{3/2}$ we obtain
\begin{equation}  \label{eq:error_det_3d}
\begin{split}
    |R_2| &\le  d |\det (F+G)|  \,
    (\frac1{\sqrt3} |F| |G|^2 + \frac2{3^{3/2}} |G|^3)
     \\ &\le
    C_{\ast}  d  |G|^2 (|F|^4 + |G|^4).
\end{split}
\end{equation}
Monotonicity of $g'$ implies, {\sl via} (\ref{eqgpmonot}), that
\begin{equation}
    DW_{\text{det}}(F+G) \cdot G  \ge g'(\det F) (\det (F+G) - \det F) + R_1.
\end{equation}
This yields
\begin{equation}
 \begin{split}
    DW_{\text{det}}(F+G) &\cdot G -  DW_{\text{det}}(F) \cdot G
    \\ \ge &
    g'(\det F) (\det (F+G) - \cof F \cdot G - \det F) + R_1
    \\ = &
    g'(\det F) (F \cdot \cof G + \det G) + R_1.
 \end{split}
\end{equation}
where in the second step we have used (\ref{eq:expansion_det_3d}). Thus,
\begin{equation}
 \begin{split}
 \label{eq:monotonicity_det_3d}
    & DW_{\text{det}}(F+G) \cdot G -  DW_{\text{det}}(F) \cdot G
    \\ &
   \ge  g'(\det F) (F \cdot \cof G + \det G) + g'(0) (F \cdot \cof G + 2 \det G)  + R_2.
 \end{split}
\end{equation}
Let
\begin{equation}
    W_{\text{six}}(F) := \frac16 |F|^6.
\end{equation}
We claim that there exists a $c_{\ast \ast} > 0$ such that
\begin{align} \label{eq:quantitative_monotonicity_six}
    E(F,G)& := \big( DW_{\text{six}}(F+G) - DW_{\text{six}}(F) \big) \cdot G
    \\ &
   \ge  c_{\ast \ast} (|F|^4 + |G|^4) |G|^2 \quad \forall F, G, \in \R^{3 \times 3}.  \nonumber
\end{align}
This inequality is well-known but, for completeness, we include a proof below. Set $c_{\ast} := \frac12 c_{\ast \ast}/ C_{\ast}$ and assume that $d \le c_{\ast} e$. Combining \eqref{eq:monotonicity_det_3d}, \eqref{eq:error_det_3d} and \eqref{eq:quantitative_monotonicity_six} and using the convexity of $F \mapsto |F|^2$ and $F \mapsto |F|^4$, we deduce that
\begin{equation}
 \begin{split}
    \big(  &DW_3(F+G) - DW_3(F) )  \cdot G
     \\ &
    -g'(\det F) (F \cdot \cof G + \det G)   -  g'(0) (F \cdot \cof G + 2 \det G)    \\ & \ge
    \frac12  e c_{\ast \ast} |G|^6 .
 \end{split}
\end{equation}
Hence, the assertion holds with $c' := \tfrac12 c_{\ast \ast}$.

Finally, we recall the proof of \eqref{eq:quantitative_monotonicity_six}. Suppose that the inequality does not hold. Then, for any $k\in\N$ there exist $(F_k, G_k)$ such that
\begin{equation}  \label{eq:Wsix_contradiction}
    E(F_k, G_k) < \frac1k (|F_k|^4 + |G_k|^4) |G_k|^2.
\end{equation}
Both sides are homogeneous of degree six under the rescaling $(F,G) \mapsto (\lambda F, \lambda G)$. Hence, we may assume that $|F_k|^2 + |G_k|^2 = 1$. Passing to a subsequence, we may further assume that $(F_k, G_k) \to (F,G)$ and we deduce that $E(F,G) \le 0$. If $G \ne 0$, this contradicts the strict convexity of $W_{\text{six}}$. If  $G=0$ then $|F_k| \to 1$. Passing to a further subsequence, we may assume that $G_k/  |G_k| \to H$ and $|H| =1$. Dividing \eqref{eq:Wsix_contradiction} by $|G_k|$, we get $D^2 W_{\text{six}}(F)(H,H) \le 0$. But a direct calculation shows that
\begin{equation}
    D^2 W_{\text{six}}(F)(H,H) =  4 |F|^2 (F \cdot H)^2 + |F|^4 |H|^2 \ge 1.
\end{equation}
This contradiction concludes the proof.
\end{proof}

\begin{proof}[Proof of Proposition~\ref{propexdeltaclosedthree}]
Let $c_*>0$ and $c'>0$ be as in Lemma \ref{pr:quasimonotone_3d}. Define  $B(F,G):=c'e|G|^6$ and $A(F):=(0,g'(\det F)F, g'(\det F))$. Then, Lemma \ref{pr:quasimonotone_3d} states
\begin{equation}
    \big(  DW_3(F+G) - DW_3(F) )  \cdot G \ge B(F,G)+A(F)\cdot M(F),
\end{equation}
where $M(F)=(F,\cof F, \det F)$ is the vector of minors of $F$. Therefore, $W_3$ is strictly polymonotone, in the sense of Definition \ref{defpolymonotone}. The conclusion then follows from  Theorem \ref{theostrictlyqmclosed2}.
\end{proof}

\begin{example}\label{example2dclosed}
Let $n=3$, $a,\beta, e>0$, $c_*$ be the minimum between 3 and the constant $c_*$ appearing in Proposition~\ref{propexdeltaclosedthree}.
Assume that $\beta\in (0,c_*e)$ and consider the function $\hat W_3:\R^{3\times 3}\to\R$ defined by
\begin{equation}
    \hat W_3(\xi):=\frac12|\xi|^2+\frac14 a|\xi|^4 + \frac16e|\xi|^6+\frac12 \beta
    (\det \xi-1-\frac{1+3a+9e}{\beta})^2.
\end{equation}
This function is invariant under rotations, $(6,6/5)$-coercive, minimized by matrices in $\SO(3)$, and generates a $(6,6/5)$-\divcurl-closed data set.
\end{example}
\begin{proof}
We first show that the set of minimizers is  $\SO(3)$. Indeed, the arithmetic-geometric mean inequality implies that $|\xi|^2 \ge 3 |\det \xi|^{2/3}$  and equality is achieved if and only if $\xi \in \R \O(3)=\{F\in\R^{3\times 3}: F^TF=t\Id, \text{ for some } t\in\R\}$.  Moreover, $\hat W_3(-\xi) > \hat W_3(\xi)$ if $\det \xi > 0$. Thus,
\begin{equation}
    \min_{\xi \in \R^{3 \times 3}} \hat W_3(\xi) = \min_{t \in [0, \infty)}  f(t) ,
\end{equation}
where
\begin{equation}
    f(t):= \frac32 t^2 + \frac94 a t^4 + \frac92 e t^6 +
    \frac12 \beta (     t^3 - 1 -\frac{1+3a+9e}{\beta})^2.
\end{equation}
We have,
\begin{align*}
    f'(t) = & 3 t^2 + 9 at^4 + 27 et^6 +  (\beta (t^3 - 1) - (1 + 3a + 9e)) 3 t^2
    \\ = &
    9 a(t^4 -t^2) + 27 e(t^6-t^2) + 3 t^2 \beta (t^3-1)
    \\ = &
    3 t^2 \, \big( 3 a(t^2 -1) + 9 e(t^4 - 1) +  \beta (t^3-1) \big).
 \end{align*}
Thus, $f'(t) < 0$ for $t \in [0,1)$ and $f'(t) > 0$ in $(1, \infty]$. This implies that $f$ has a strict global minimum at $1$. Hence, the set of minimizers of $W_3$ is $\SO(3)$.

In order to check closedness and coercivity, we define  $g(t):=\frac12 \beta(t-1-\frac{1+3a+9e}\beta)^2$ and compute $g'(t)=\beta t -(\beta+1+3a+9e)$. Since $\beta<3e$, Lemma \ref{le:coercive}(ii) proves coercivity. From $|g'(t)-g'(s)|\le \beta|t-s|$ and $\beta<c_*e$, we see that Proposition~\ref{propexdeltaclosedthree} applies and proves closedness.
\end{proof}

\section{Existence of minimizers}\label{6hoXUS}

On the strength of the preceding developments, we are finally in a position to elucidate the question of existence of solutions.
Throughout this section we work on local materials, in the sense of Definition \ref{deflocalmat}, and  assume that Assumptions \ref{assumptiongeneral} and \ref{Ze6How} hold. We write, as in the Introduction,
\begin{equation}
    J(F,P)
    :=
        \begin{cases}
            \displaystyle\int_\Omega  \psi_{\calD_\loc}(F(x),P(x)) \, dx ,
            & \text{if } (F,P) \in \Emb , \\
            \infty , & \text{otherwise} ,
        \end{cases}
\end{equation}
whereupon problem (\ref{9lFapr}) becomes
\begin{equation}
    \inf_{(F,P) \in \Xpq(\Omega)} J(F,P) .
\end{equation}
We also write
\begin{equation}
    I((F,P),(F',P'))
    :=
    \begin{cases}
\displaystyle    \int_\Omega \Big( V(F(x)-F'(x))+V^*(P(x)-P'(x)) \Big) \, dx ,\\
\hfill \text{ if } (F,P)\in\Emb,\ (F',P')\in \calD,
     \\
     \infty, \hfill \text{ otherwise.}
    \end{cases}
\end{equation}
whereupon problem (\ref{6Hudof}) becomes
\begin{equation}\label{6Hudof2}
    \inf_{((F,P), (F',P'))\in \Xpq(\Omega)\times \Xpq(\Omega)}
    I((F,P),(F',P')) .
\end{equation}
From the definition it is also clear that
\begin{equation}
    J(F,P)=
    \inf_{(F',P')\in \Xpq(\Omega)}
    I((F,P),(F',P')).
\end{equation}
The set $\Emb$ is not empty by definition. If $\calD$ is not empty, then $\inf I<\infty$ and $\inf J<\infty$.

A number of alternatives arise in connection with the possible solutions of problem (\ref{6Hudof2}).

\begin{definition}[Classical solution]
Given a stress-strain function $T:\Rnn\to\Rnn$, we say that $u\in W^{1,p}(\Omega;\R^n)$ is a classical solution if $(Du, T(Du))\in \calE$.
\end{definition}

This definition implies the satisfaction of the boundary conditions, compatibility and the equilibrium equation as in Definition~\ref{defineE}. Clearly, classical solutions are minimizers of (\ref{6Hudof2}). In particular, classical solutions exist if and only if the infimum (\ref{6Hudof2}) is attained and is zero. A similar notion of solution can be defined without recourse to the assumption that the data set is local and is generated by a stress-strain function.

\begin{definition}[Strong solution]
Given a data set $\calD\subseteq\Xpq(\Omega)$, we say that $(F,P)\in \Xpq(\Omega)$ is a strong solution if $(F,P)\in \calE\cap\calD$.
\end{definition}

In general we cannot expect the infimum of $I$ to be zero, which suggests the following generalization of the notion of solution.

\begin{definition}[Generalized solution]
We say that $((F,P),(F',P')) \in \calE\times\calD\subseteq\Xpq(\Omega) \times \Xpq(\Omega)$ is a generalized solution if it is a minimizer of (\ref{6Hudof2}) in the set $\calE\times\calD$.
\end{definition}

Any classical solution $u\in W^{1,p}(\Omega;\R^n)$ corresponds to a strong solution given by $F:=Du$, $P:=T(F)$. Furthermore, all strong  solutions are generalized solutions and a generalized solution is strong if and only if $F'=F$ and $P'=P$. We make these assertions precise in the following theorem.

\begin{theorem}\label{theosoleq}
{The} following are equivalent:
\begin{enumerate}
 \item \label{theosoleqitstrongs} $(F,P)\in \Xpq(\Omega)$ is a strong solution.
 \item \label{theosoleqitjzero}  $J(F,P)=0$.
 \item \label{theosoleqitgens}  $((F,P), (F,P))\in \Xpq(\Omega)\times \Xpq(\Omega)$ is a generalized solution.
\end{enumerate}
If $\calD$ is generated by a stress-strain function $T:\Rnn\to\Rnn$, then the following is also equivalent:
\begin{enumerate}
  \setcounter{enumi}{3}
 \item\label{theosoleqitclass} there is a classical solution $u\in W^{1,p}(\Omega;\R^n)$ with $F=Du$, $P=T(Du)$.
\end{enumerate}
 \end{theorem}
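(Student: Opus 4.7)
The plan is to unfold definitions, relying on two elementary facts: (a) $V$ and $V^*$ are nonnegative and by (\ref{eqassVVst}) both vanish only at $0$, so $V(\xi-\xi')+V^*(\eta-\eta')=0$ iff $(\xi,\eta)=(\xi',\eta')$; and (b) the minimum in (\ref{SP5voKb}) defining $\psi_{\calD_\loc}$ is actually attained. I will first establish the cycle \ref{theosoleqitstrongs} $\Rightarrow$ \ref{theosoleqitjzero} $\Rightarrow$ \ref{theosoleqitstrongs} together with the equivalence \ref{theosoleqitstrongs} $\Leftrightarrow$ \ref{theosoleqitgens}, and then handle \ref{theosoleqitstrongs} $\Leftrightarrow$ \ref{theosoleqitclass} separately under the graph hypothesis.

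For \ref{theosoleqitstrongs} $\Rightarrow$ \ref{theosoleqitjzero}, I observe that if $(F,P)\in\calE\cap\calD$, then $(F(x),P(x))$ itself is an admissible competitor in the pointwise min, so $\psi_{\calD_\loc}(F(x),P(x))=V(0)+V^*(0)=0$ a.e.\ and $J(F,P)=0$. For the converse, $J(F,P)=0$ forces $(F,P)\in\Emb=\calE$ (else $J=\infty$) and $\psi_{\calD_\loc}(F(x),P(x))=0$ a.e.; fact (b) produces a pointwise minimizer $(F'(x),P'(x))\in\calD_\loc$ realizing this zero, and fact (a) identifies it with $(F(x),P(x))$, placing $(F,P)$ in $\calD$. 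For \ref{theosoleqitstrongs} $\Leftrightarrow$ \ref{theosoleqitgens}, in one direction $((F,P),(F,P))\in\calE\times\calD$ with $I((F,P),(F,P))=0$ is automatically a global minimizer since $I\geq0$ on its effective domain; in the other direction, the very definition of a generalized solution places the two components in $\calE$ and $\calD$, so the diagonal case gives $(F,P)\in\calE\cap\calD$.

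Under $\calD_\loc=\DlocT$, the equivalence \ref{theosoleqitstrongs} $\Leftrightarrow$ \ref{theosoleqitclass} is a direct reading of the two defining constraints: membership $(F,P)\in\calE=\Emb$ delivers, via Definition \ref{defineE}, a displacement $u\in W^{1,p}(\Omega;\R^n)$ with $F=Du$ (plus compatibility, force- and moment-equilibrium, and boundary conditions), while $\calD_\loc=\DlocT$ enforces $P(x)=T(F(x))=T(Du(x))$ a.e., so each implication just reads off the other condition. The only delicate point in the entire argument is fact (b): were (\ref{SP5voKb}) read as an infimum rather than an attained minimum, then \ref{theosoleqitjzero} $\Rightarrow$ \ref{theosoleqitstrongs} would only yield $(F(x),P(x))\in\overline{\calD_\loc}$ a.e.\ and would require closedness of $\calD_\loc$ as an extra assumption. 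Notably, none of the compactness or lower-semicontinuity machinery developed in Sections \ref{deReW8} and \ref{V7bisp} enters; the result is essentially a definitional tautology once (a) and (b) are in hand.
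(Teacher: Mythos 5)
Your proposal is correct and follows essentially the same route as the paper's own proof: all four equivalences are unfolded directly from the definitions, with \ref{theosoleqitjzero} $\Rightarrow$ \ref{theosoleqitstrongs} resting on the fact that $\psi_{\calD_\loc}$ vanishes only on $\calD_\loc$. Your explicit remark that this last fact hinges on the minimum in (\ref{SP5voKb}) being attained (rather than a mere infimum) is a fair point of care that the paper absorbs into the statement of Assumption~\ref{Ze6How}, but it does not change the argument.
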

The proof is straightforward from the definitions. For the convenience of the reader, we provide some details.

 \begin{proof}
\ref{theosoleqitstrongs} $\Rightarrow$ \ref{theosoleqitjzero}.
Let $(F,P)\in \calD\cap\calE$. Then, $(F(x),P(x))\in\calD_\loc$ almost everywhere, hence, $\psi_{\calD_\loc}(F,P)=0$ almost everywhere.

\ref{theosoleqitjzero} $\Rightarrow$ \ref{theosoleqitstrongs}.
If $J(F,P)=0$ then necessarily $(F,P)\in \calE$. Since $\psi_{\calD_\loc}>0$ away from $\calD_\loc$, we also have $(F(x),P(x))\in\calD_\loc$ almost everywhere and, therefore, $(F,P)\in\calD$.

\ref{theosoleqitstrongs}
$\Rightarrow$ \ref{theosoleqitgens}.
Follows from $(F,P)\in\calD\cap\calE$, $I((F,P),(F,P))=0$, and $I\ge0$ for all arguments.

\ref{theosoleqitgens} $\Rightarrow$ \ref{theosoleqitstrongs}.
By definition of generalized solution, $(F,P)\in\calE\times\calD$.

We now assume now that $\calD_\loc=\DlocT$ for some  $T:\Rnn\to\Rnn$.

\ref{theosoleqitstrongs} $\Rightarrow$ \ref{theosoleqitclass}.
Since $(F,P)\in \calE$, there is $u\in W^{1,p}(\Omega;\R^n)$ with $F=Du$. Since $(F,P)(x)\in \calD_\loc$, we have $P=T(F)=T(Du)$ almost everywhere,, therefore, $(Du, T(Du))\in\calE$.

\ref{theosoleqitclass} $\Rightarrow$ \ref{theosoleqitstrongs}.
We set $F:=Du$, $P:=T(Du)$, so that automatically $(F,P)\in\calD$. Since $u$ is a classical solution, $(F,P)\in \calE$.
\end{proof}

We now show how the concepts of coercivity and closedness developed in the previous sections can be used to prove existence of solutions, provided that the infimum of $J$ is known to be zero.

\begin{theorem}\label{prA5lj}
Let $\calD_\loc$ be nonempty, $(p,q)$-coercive and locally \divcurl-closed. If $\inf J=0$, there is a minimizer of $J$ in $\calE$ that is a strong solution.
\end{theorem}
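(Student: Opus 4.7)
The plan is to run the direct method, using Theorem~\ref{theoremcompactness} for compactness and Proposition~\ref{propdeltaclosed} for the closedness needed to place the limit in $\calD$.

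First I would pick a minimizing sequence $(F_k,P_k)\in\Emb$ with $J(F_k,P_k)\to\inf J=0$. The only preparatory work is to produce an accompanying sequence $(F_k',P_k')\in\calD$ realising (up to $o(1)$) the pointwise infimum defining $\psi_{\calD_\loc}$. Note that $\calD_\loc$ is closed in $\Rnn\times\Rnn$ (a direct consequence of local \divcurl-closedness, applied to constant sequences), and for each fixed $x$ the map $(\xi,\eta)\mapsto V(F_k(x)-\xi)+V^*(P_k(x)-\eta)$ is continuous and coercive by (\ref{eqassVVst}), so the infimum defining $\psi_{\calD_\loc}(F_k(x),P_k(x))$ is attained. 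A standard measurable selection theorem (e.g.\ Kuratowski--Ryll-Nardzewski) then supplies measurable maps $F_k',P_k':\Omega\to\Rnn$ with $(F_k'(x),P_k'(x))\in\calD_\loc$ for a.e.\ $x$ and
\begin{equation*}
    V(F_k-F_k')+V^*(P_k-P_k')=\psi_{\calD_\loc}(F_k,P_k)\quad\text{a.e.\ in }\Omega.
\end{equation*}
The coercivity bounds in (\ref{eqassVVst}) together with $(F_k,P_k)\in\Xpq(\Omega)$ show that $F_k'\in L^p$ and $P_k'\in L^q$, so $(F_k',P_k')\in\calD$.

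Next I would invoke Theorem~\ref{theoremcompactness}. The sequences $(F_k,P_k)\in\Eg$ and $(F_k',P_k')\in\calD$ satisfy
\begin{equation*}
    \int_\Omega \bigl(V(F_k-F_k')+V^*(P_k-P_k')\bigr)\,dx=J(F_k,P_k)\to 0,
\end{equation*}
so the hypothesis holds with $\alpha=0$. Part~(i) yields, along a subsequence, weak limits $(F_k,P_k)\weakto(F,P)$ and $(F_k',P_k')\weakto(F',P')$ in $\Xpq(\Omega)$; part~(ii) gives $(F',P')=(F,P)$ and the \divcurl-convergence of $(F_k',P_k')$ to $(F,P)$; and part~(iii), since $(F_k,P_k)\in\Emb$, ensures $(F,P)\in\Emb$.

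The final step uses closedness to capture the limit inside $\calD$. By Proposition~\ref{propdeltaclosed}, local \divcurl-closedness of $\calD_\loc$ is equivalent to \divcurl-closedness of $\calD$, so the \divcurl-convergent sequence $(F_k',P_k')\in\calD$ has its limit in $\calD$, i.e.\ $(F,P)\in\calD$. Consequently $(F,P)\in\Emb\cap\calD$ is a strong solution in the sense of the definition preceding Theorem~\ref{theosoleq}, and that theorem gives $J(F,P)=0=\inf J$, so $(F,P)$ is the desired minimizer. The main technical point is the measurable-selection/integrability step that produces the accompanying sequence in $\calD$; once that is in place, the rest is a direct application of the compactness and closedness results already established.
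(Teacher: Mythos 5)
Your proposal is correct and follows the same route as the paper's (much terser) proof: a minimizing sequence, Theorem~\ref{theoremcompactness} with $\alpha=0$ for compactness and \divcurl-convergence of the accompanying data sequence, and \divcurl-closedness of $\calD$ to place the limit in $\Emb\cap\calD$. The measurable-selection construction of $(F_k',P_k')$ and the appeal to Proposition~\ref{propdeltaclosed} are details the paper leaves implicit, and you supply them correctly.
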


\begin{proof}
Let $(F_k,P_k)$ be a minimizing sequence in $\calE$. By Theorem \ref{theoremcompactness}, there is a subsequence that \divcurl-converges to some $(F,P)\in\calE$. By \divcurl-closedness $(F,P)\in\calD$,, therefore, it is a minimizer of $J$ in $\calE$.
\end{proof}

The closedness property can be verified {\sl via} Theorem \ref{theostrictlyqmclosed2} and holds, in particular, for the examples presented in the previous sections. The property $\inf J=0$ depends on the boundary values and remains to be verified explicitly on a case-by-case basis.

\begin{remark}
If $\calD_\loc=\DlocT$ for some $T:\R^{n\times n}\to\R^{n\times n}$ that is $(p,q)$-coercive, in the sense of Remark \ref{remTcoercive}, and one of the following conditions holds:
\begin{enumerate}
    \item[(a)] $T$ is continuous and strictly polymonotone;
    \item[(b)]$T$ is strictly quasimonotone and $DT$ obeys the growth condition (\ref{eqtfgtf}).
\end{enumerate}
Then, $\calD_\loc$ is nonempty, $(p,q)$-coercive and locally \divcurl-closed, as required in Theorem \ref{prA5lj}.
\end{remark}

\begin{proof}
The proof follows immediately from Theorem \ref{theostrictlyqmclosed2}.
\end{proof}

\appendix
\section{Traces of Sobolev spaces}
\label{appendixtraces}
We use standard properties of Sobolev spaces and their traces, see for example \cite{KufnerJohnFucik1977, Temam1979BuchNavierStokes, Ziemer1989-weaklydiff-book}. For the convenience of the reader we recall here the basic definitions and the facts used in the preceding analyses. 

\begin{definition}\label{deffractionalsobolev} Let $\Omega\subseteq\R^n$ be a bounded Lipschitz set, $p\in(1,\infty)$. For $f\in L^p(\partial\Omega;\R^n)$ we define
\begin{equation}
    [f]_{1-1/p,p}^p:=
    \int_{\partial\Omega\times \partial\Omega}
    \frac{|f(x)-f(y)|^p}{|x-y|^{n+1}} d\calH^{n-1}(x) d\calH^{n-1}(y) ,
\end{equation}
and set
\begin{equation}
    W^{1-1/p,p}(\partial\Omega;\R^n):=\{f\in  L^p(\partial\Omega;\R^n):
    [f]_{1-1/p,p}^p<\infty\},
\end{equation}
equipped with the norm $\|f\|_{L^p(\partial\Omega)}+[f]_{1-1/p,p}$. Furthermore, the dual space is denoted by
\begin{equation}
    W^{-1+1/p,q}(\partial\Omega;\R^n)
    =
    (W^{1-1/p,p}(\partial\Omega;\R^n))^*,
\end{equation}
where $q>1$ is defined by the condition $1/p+1/q=1$.
\end{definition}
It is readily checked that $W^{1-1/p,p}(\partial\Omega;\R^n)$ is a reflexive Banach space
\cite[Sect.~6.8]{KufnerJohnFucik1977}.

\begin{lemma}\label{lemmatracew1p}
Let $\Omega\subseteq\R^n$ be a bounded Lipschitz set, $p\in (1,\infty)$.
\begin{enumerate}
\item There is a linear continuous operator $B:W^{1,p}(\Omega;\R^n)\to W^{1-1/p,p}(\partial\Omega;\R^n)$ such that $B\varphi=\varphi|_{\partial\Omega}$ for any $\varphi\in C^1(\bar\Omega;\R^n)$.
\item There is a linear continuous operator $\Ext: W^{1-1/p,p}(\partial\Omega;\R^n)\to W^{1,p}(\Omega;\R^n)$ such that $B\Ext u=u$ for any $u\in  W^{1-1/p,p}(\partial\Omega;\R^n)$.
\end{enumerate}
\end{lemma}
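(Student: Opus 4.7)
The plan is to reduce both assertions to the model case of the half-space $\R^n_+=\{x\in\R^n:x_n>0\}$ by means of a finite atlas of bi-Lipschitz boundary charts and a subordinate partition of unity. Since $\Omega$ is bounded and Lipschitz, there are finitely many open sets $U_1,\dots,U_N\subseteq\R^n$ covering $\partial\Omega$ and bi-Lipschitz maps $\Phi_i$ sending $U_i\cap\Omega$ to a half ball $B_1\cap\R^n_+$ and $U_i\cap\partial\Omega$ to $B_1\cap\{x_n=0\}$. A partition of unity $\{\theta_i\}$ subordinate to this cover, together with one cutoff supported in the interior, lets us glue local constructions, since both the $W^{1,p}$-norm and the Gagliardo seminorm $[\cdot]_{1-1/p,p}$ transform equivalently under bi-Lipschitz changes of coordinates (the Jacobian determinants are bounded above and below a.e.).

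For part (i), it suffices by density of $C^1(\overline\Omega)$ in $W^{1,p}(\Omega;\R^n)$ to establish the bound on smooth functions. In the half-space setting, for $\varphi\in C^1_c(\overline{B_1\cap\R^n_+})$, writing
\[
 \varphi(x',0)-\varphi(y',0)=\int_0^1\nabla\varphi(\gamma(t))\cdot\dot\gamma(t)\,dt
\]
for a path $\gamma$ going from $(x',0)$ upward to height $|x'-y'|$, across, and back down (which stays in $\overline{\R^n_+}$), and applying Gagliardo's classical slicing/Hardy computation gives
\[
 [\varphi(\cdot,0)]_{1-1/p,p}^p\le C\|D\varphi\|_{L^p(\R^n_+)}^p,
\]
together with the trivial bound $\|\varphi(\cdot,0)\|_{L^p}\le C\|\varphi\|_{W^{1,p}(\R^n_+)}$ by a one-dimensional fundamental-theorem-of-calculus argument. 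Summing over charts and passing to the limit defines $B$ as the unique continuous extension of the pointwise restriction.

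For part (ii), the harder direction, we need an explicit right inverse. In the half-space we use the classical mollifier extension
\[
 (\Ext u)(x',x_n):=\int_{\R^{n-1}}\eta(z)\,u(x'+x_n z)\,dz,\qquad x_n>0,
\]
with $\eta\in C_c^\infty(\R^{n-1})$ satisfying $\int\eta=1$. A direct differentiation shows that each entry of $D\Ext u(x',x_n)$ is pointwise bounded by
\[
 \frac{C}{x_n}\int_{|z|\le R}|u(x'+x_n z)-u(x')|\,dz,
\]
and the change of variables $y=x'+x_n z$ followed by Fubini and Jensen's inequality yields the crucial estimate
\[
 \int_0^\infty\!\!\int_{\R^{n-1}}|D\Ext u|^p\,dx'\,dx_n\le C[u]_{1-1/p,p}^p,
\]
while $\|\Ext u\|_{L^p}$ is bounded by $\|u\|_{L^p}$ by a further application of Jensen. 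As $x_n\downarrow 0$, the mollification converges to $u$, giving $B\Ext u=u$. We then transport this half-space extension back to each chart via $\Phi_i^{-1}$, multiply by $\theta_i$, sum, and truncate by a cutoff equal to $1$ near $\partial\Omega$ to produce a well-defined operator into $W^{1,p}(\Omega;\R^n)$.

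The main technical obstacle is the fractional-seminorm estimate on $D\Ext u$ in part (ii): one must convert the singular factor $x_n^{-1}$ coming from differentiating the scaled mollifier into the weight $|x-y|^{-(n+1)}$ in the Gagliardo seminorm. This is the standard but nontrivial core computation. The remaining ingredients---density of smooth functions, the bi-Lipschitz invariance of the relevant norms, the covering/gluing via partition of unity, and the straightforward trace estimate in (i)---are routine and can be cited from, e.g., \cite{KufnerJohnFucik1977} or \cite{Ziemer1989-weaklydiff-book}.
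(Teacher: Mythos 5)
Your sketch is correct in outline, but it takes a different route from the paper only in the sense that the paper does not prove this lemma at all: its proof is a one-line citation of \cite[Th.~6.8.13 and Th.~6.9.2]{KufnerJohnFucik1977}, which establish precisely these two assertions. What you write is, in substance, the classical argument behind that citation: flattening the Lipschitz boundary by finitely many bi-Lipschitz charts and a subordinate partition of unity (both the $W^{1,p}$-norm and the Gagliardo seminorm being equivalent under bi-Lipschitz changes of variables), the half-space trace inequality for smooth functions via Gagliardo's slicing argument followed by density of $C^1(\bar\Omega)$ in $W^{1,p}(\Omega;\R^n)$, and the mollifier extension $(\Ext u)(x',x_n)=\int\eta(z)\,u(x'+x_nz)\,dz$, whose gradient bound $\tfrac{C}{x_n}\fint_{|z|\le R}|u(x'+x_nz)-u(x')|\,dz$ (using $\int D\eta\,dz=0$) combined with Jensen, Fubini and the substitution $y'=x'+x_nz$ gives $\|D\Ext u\|_{L^p}^p\le C[u]_{1-1/p,p}^p$; you correctly identify this as the only nontrivial step, the gluing and the identification $B\Ext u=u$ being routine. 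So the comparison is one of economy versus self-containedness: the citation keeps the appendix short, your version makes the trace/extension machinery explicit. One point worth recording: your computation produces the weight $|x'-y'|^{-(n-2+p)}$ on the $(n-1)$-dimensional boundary, i.e.\ the standard exponent $(n-1)+sp$ with $s=1-1/p$ used in the cited references; the exponent $n+1$ displayed in Definition~\ref{deffractionalsobolev} coincides with this only for $p=3$, so your estimate is consistent with the intended trace space rather than with that formula as literally written.
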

\begin{proof}
See \cite[Th.~6.8.13 and Th.~6.9.2]{KufnerJohnFucik1977}
\end{proof}

\begin{definition}\label{defEqomega}
Let $\Omega\subseteq\R^n$ be a bounded Lipschitz set, $q\in (1,\infty)$. We define
\begin{equation}
    E^q(\Omega):=\{ v\in L^q(\Omega;\Rnn): \div v\in L^q(\Omega;\R^n)\} ,
\end{equation}
and endow it with the norm $\|v\|_{E^q}:=\|v\|_{L^q}+\|\div v\|_{L^q}$.
\end{definition}
It is easily verified that $E^q$ is a reflexive Banach space, and that $C^\infty(\bar\Omega)\cap E^q(\Omega)$ is dense in $v$, see, for example, \cite[Th.~1.1]{Temam1979BuchNavierStokes} (the different exponent makes no difference in the proof).
\begin{lemma}\label{lemmatraceeq}
Let $\Omega\subseteq\R^n$ be a bounded Lipschitz set, $q\in (1,\infty)$. There is a linear continuous operator $B_\nu:E^q(\Omega)\to W^{-1/q,q}(\partial\Omega;\R^n)$ such that $B_\nu\varphi=\varphi|_{\partial\Omega}\cdot \nu$, where $\nu$ is the outer normal to $\partial\Omega$, for any $\varphi\in C^1(\bar\Omega;\R^n)\cap E^q(\Omega)$. Furthermore,
\begin{equation}\label{eqdualitybnub}
    \langle B_\nu v, Bu\rangle = \int_\Omega v\cdot Du \,dx + \int_\Omega u \cdot \div v \,dx ,
\end{equation}
for all $v\in E^q(\Omega)$, $u\in W^{1,p}(\Omega;\R^n)$, where $\langle \cdot,\cdot\rangle$ denotes the duality pairing between $ W^{1/q,p}(\partial\Omega;\R^n)$ and $ W^{-1/q,q}(\partial\Omega;\R^n)$.
\end{lemma}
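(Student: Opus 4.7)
The plan is standard: define $B_\nu$ by duality through the Gauss--Green identity and extend by density from the smooth case. The one nontrivial ingredient, which I would invoke from the literature (as hinted in the paper just before the statement), is the density of $C^\infty(\bar\Omega;\Rnn)\cap E^q(\Omega)$ in $E^q(\Omega)$; this is classical for Lipschitz $\Omega$ and is obtained by localizing with a partition of unity subordinate to a covering of $\bar\Omega$ and mollifying each piece after translating in the direction of an interior cone of the local graph representation of $\partial\Omega$.

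First, for smooth $v\in C^\infty(\bar\Omega;\Rnn)$, the normal trace $v\nu|_{\partial\Omega}$ is a pointwise-defined element of $L^q(\partial\Omega;\R^n)\hookrightarrow W^{-1/q,q}(\partial\Omega;\R^n)$. For $u\in C^1(\bar\Omega;\R^n)$, the componentwise divergence theorem applied to the vector fields $x\mapsto u_i(x)v_{ij}(x)$ yields
\[
\int_{\partial\Omega}(v\nu)\cdot u\,d\calH^{n-1}=\int_\Omega v\cdot Du\,dx+\int_\Omega u\cdot\div v\,dx.
\]
By density of $C^1(\bar\Omega;\R^n)$ in $W^{1,p}(\Omega;\R^n)$ and continuity of both sides in $u$, this extends to all $u\in W^{1,p}(\Omega;\R^n)$, with the left-hand trace interpreted via the operator $B$ of Lemma~\ref{lemmatracew1p}.

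Next, to define $B_\nu$ on general $v\in E^q(\Omega)$, I set, for $\psi\in W^{1/q,p}(\partial\Omega;\R^n)$,
\[
\langle B_\nu v,\psi\rangle:=\int_\Omega v\cdot D(\Ext\psi)\,dx+\int_\Omega(\Ext\psi)\cdot\div v\,dx.
\]
This is independent of the choice of $W^{1,p}$-extension of $\psi$: if $u_1,u_2$ both have trace $\psi$ then $u_1-u_2\in W^{1,p}_0(\Omega;\R^n)$, and by density of $C^\infty_c(\Omega;\R^n)$ together with the fact that $\div v\in L^q$ equals the distributional divergence of $v$, the two candidate values coincide. Hölder's inequality combined with continuity of $\Ext$ then gives $|\langle B_\nu v,\psi\rangle|\le C\|v\|_{E^q(\Omega)}\|\psi\|_{W^{1/q,p}(\partial\Omega)}$, so $B_\nu$ is linear and continuous from $E^q(\Omega)$ into $W^{-1/q,q}(\partial\Omega;\R^n)$, and (\ref{eqdualitybnub}) holds by construction (applied with $\psi=Bu$, using extension-independence to replace $\Ext(Bu)$ by $u$).

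Finally, I must verify that for smooth $v$ this definition reproduces the pointwise normal trace $v\nu$: this is immediate from the classical Gauss--Green identity of the first step, together with uniqueness of the functional representation on $W^{1/q,p}(\partial\Omega;\R^n)$. The main (and only) technical obstacle is the density of smooth functions in $E^q(\Omega)$; once that is in hand, everything else is a routine duality argument, and linearity, continuity and the compatibility with smooth $v$ follow.
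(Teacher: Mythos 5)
Your proposal is correct and follows essentially the same route as the paper, which simply cites the classical argument of Temam (Th.~1.2 and Rem.~1.3 of the Navier--Stokes book): define $B_\nu v$ by the Gauss--Green duality formula through a $W^{1,p}$-lifting of the boundary datum, check independence of the lifting via $W^{1,p}_0$ and the distributional divergence, and identify the result with the pointwise normal trace for smooth $v$. The only remark worth making is that in your formulation the density of $C^\infty(\bar\Omega)\cap E^q(\Omega)$ in $E^q(\Omega)$ is not actually needed (the duality definition works directly on all of $E^q(\Omega)$), so the step you single out as the main obstacle is in fact dispensable.
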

\begin{proof}
Follows from the same argument used in \cite[Th.~1.2 and Rem.~1.3]{Temam1979BuchNavierStokes}
\end{proof}

\section*{Acknowledgements}

This work was partially funded by the Deutsche Forschungsgemeinschaft (DFG, German Research Foundation) {\sl via} project 211504053 - SFB 1060 and project 390685813 -  GZ 2047/1 - HCM.


\end{document}